\title[The trace map of Frobenius]
{The trace map of Frobenius and extending sections for threefolds} 
\author{Hiromu Tanaka} 
\subjclass[2010]{14E30, 13A35.}
\keywords{minimal model program, basepoint freeness, extension theorem, positive characteristic}
\address{Department of Mathematics, Imperial College, London, 180 Queen's Gate, 
London SW7 2AZ, UK} 
\email{h.tanaka@imperial.ac.uk}
\newcommand{\Ker}[0]{{\operatorname{Ker}}}
\newcommand{\Image}[0]{{\operatorname{Image}}}
\newcommand{\Tr}[0]{{\operatorname{Tr}}}
\newcommand{\Spec}[0]{{\operatorname{Spec}}}
\newcommand{\Supp}[0]{{\operatorname{Supp}}}
\newcommand{\Ex}[0]{{\operatorname{Ex}}}
\newtheorem{thm}{Theorem}[section]
\newtheorem{lem}[thm]{Lemma}
\newtheorem{cor}[thm]{Corollary}
\newtheorem{prop}[thm]{Proposition}
\newtheorem{fact}[thm]{Fact}
\newtheorem{ex}[thm]{Example}
\theoremstyle{definition}
\newtheorem{ques}[thm]{Question}
\newtheorem{conje}[thm]{Conjecture}
\newtheorem{dfn}[thm]{Definition}
\newtheorem{rem}[thm]{Remark}
\newtheorem*{ack}{Acknowledgments}      
\newtheorem{step}{Step}
\newtheorem{nasi}[thm]{}
\newcommand{\Q}{$\mathbb{Q}$}
\newcommand{\Z}{$\mathbb{Z}$}
\newcommand{\MO}{\mathcal{O}}
\begin{document}

\maketitle

\begin{abstract}
In this paper, by using the trace map of Frobenius, 
we consider problems on extending sections for positive characteristic threefolds. 
\end{abstract}

\tableofcontents

\setcounter{section}{-1}

\section{Introduction}

In characteristic zero, by the Kodaira vanishing theorem and its generalizations, 
we can establish some results on adjoint divisors, such as 
the Kawamata--Shokurov basepoint free theorem (see, for example, \cite[Theorem~3.3]{KM}) and 
the Hacon--M\textsuperscript{c}Kernan extension theorem (\cite[Theorem~5.4.21]{HM}). 
These theorems claim, under suitable conditions,  that
an adjoint divisor 
$m(K_X+\Delta+A)$ has good properties, where 
$m\in\mathbb Z_{>0}$, 
$(X, \Delta)$ is a pair and $A$ is an ample divisor. 
In this paper we only consider the following very simple situation: 
$X$ is a smooth projective variety, 
$\Delta=S$ is a smooth prime divisor and $A$ is an ample Cartier divisor. 
The following fact immediately follows from the Kodaira vanishing theorem. 

\begin{fact}\label{0char-zero}
Let $k$ be an algebraically closed field of characteristic zero. 
Let $X$ be a smooth projective variety over $k$. 
Let $S$ be a smooth prime divisor on $X$ and 
let $A$ be an ample Cartier divisor on $X$ such that $K_X+S+A$ is nef. 
Fix $m\in\mathbb Z_{>0}.$ 
Then, by the Kodaira vanishing theorem, we obtain 
$$H^1(X, K_X+A+(m-1)(K_X+S+A))=0.$$
Thus, the natural restriction map 
$$H^0(X, m(K_X+S+A))\to H^0(S, m(K_S+A|_S))$$
is surjective. 
\end{fact}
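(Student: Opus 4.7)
The plan is to reduce the surjectivity to a standard cohomology vanishing via the structure sequence of $S$. First, I would consider the short exact sequence of sheaves on $X$:
$$0 \to \mathcal{O}_X(m(K_X+S+A) - S) \to \mathcal{O}_X(m(K_X+S+A)) \to \mathcal{O}_S(m(K_X+S+A)|_S) \to 0,$$
obtained by twisting the ideal sheaf sequence $0 \to \mathcal{O}_X(-S) \to \mathcal{O}_X \to \mathcal{O}_S \to 0$ by $\mathcal{O}_X(m(K_X+S+A))$. By the adjunction formula $(K_X+S)|_S = K_S$ (valid since $S$ is smooth), the restriction $m(K_X+S+A)|_S$ equals $m(K_S + A|_S)$, giving exactly the quotient appearing in the statement.

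Next, I would rewrite the kernel twist. A direct computation yields
$$m(K_X+S+A) - S = K_X + A + (m-1)(K_X+S+A),$$
which is the divisor already flagged in the statement of the fact. Since $A$ is ample, $K_X + S + A$ is nef, and $m-1 \geq 0$, the divisor $A + (m-1)(K_X+S+A)$ is the sum of an ample and a nef divisor, hence ample. Therefore the Kodaira vanishing theorem applies in characteristic zero and gives
$$H^1\bigl(X, K_X + A + (m-1)(K_X+S+A)\bigr) = 0.$$

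Finally, I would take the long exact sequence in cohomology associated with the short exact sequence above. The $H^1$ of the kernel vanishes by the step just established, so the map $H^0(X, m(K_X+S+A)) \to H^0(S, m(K_S+A))$ is surjective, as required.

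There is essentially no serious obstacle: the argument is the textbook application of Kodaira vanishing plus adjunction. The only small checks are the divisor identity rewriting the kernel as $K_X$ plus an ample class, and the observation that ample plus nef is ample, both of which are routine. The value of recording this characteristic-zero proof here is that it pinpoints exactly the vanishing result whose positive-characteristic analogue the paper will need to engineer via the Frobenius trace map.
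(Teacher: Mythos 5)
Your argument is exactly the one the paper intends: the fact's statement itself already indicates that the kernel of the restriction sequence is $K_X+A+(m-1)(K_X+S+A)$, that this is $K_X$ plus the ample divisor $A+(m-1)(K_X+S+A)$ (ample plus nef), and that Kodaira vanishing then kills the $H^1$ obstruction to surjectivity. Your write-up fills in the routine details (adjunction and the divisor identity) correctly, so it matches the paper's proof.
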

 
It is natural to consider whether 
the above fact also holds in positive characteristic. 
Unfortunately, however, there exists the following example. 

\begin{ex}[cf. Example~\ref{cex-surjective}]
Let $k$ be an algebraically closed field of positive characteristic. 
Then, there exist a smooth projective surface $X$ over $k$, 
a smooth prime divisor $C$ on $X$, and 
an ample Cartier divisor $A$ on $X$ 
such that $K_X+C+A$ is nef and the natural restriction map 
$$H^0(X, K_X+C+A)\to H^0(C, K_C+A|_C)$$
is not surjective. 
\end{ex}

Thus, we would like to find a suitable analogue of Fact~\ref{0char-zero} in positive characteristic. 
In this paper, we prove the following two theorems. 

\begin{thm}[cf. Corollary~\ref{surface-restriction}]\label{0surface-special-extension}
Let $k$ be an algebraically closed field of positive characteristic. 
Let $X$ be a smooth projective surface over $k$. 
Let $C$ be a smooth prime divisor on $X$ and 
let $A$ be an ample Cartier divisor on $X$. 
If $H^0(C, K_C+A|_C)\neq 0$, then 
the natural restriction map 
$$H^0(X, K_X+C+A)\to H^0(C, K_C+A|_C)$$
is a nonzero map. 
\end{thm}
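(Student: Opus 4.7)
The plan is to produce, for $e \gg 0$, a section of $H^0(X, K_X+C+A)$ whose restriction to $C$ equals a preassigned nonzero $s \in H^0(C, K_C+A|_C)$, by combining Fujita-type vanishing at the Frobenius twist $p^e(C+A)$ with the trace map of Frobenius on $X$. The point is that although Kodaira vanishing (which drives the characteristic zero argument of Fact~\ref{0char-zero}) fails, Fujita vanishing holds in any characteristic, and the trace of Frobenius lets us descend from the high twist back to $K_X+C+A$.

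First, since $C+A$ is ample, Fujita vanishing yields
$$H^1(X, K_X + p^e(C+A) - C) \;=\; H^1(X, K_X + (p^e-1)C + p^e A) \;=\; 0$$
for $e \gg 0$. Tensoring the ideal sequence $0 \to \mathcal{O}_X(-C) \to \mathcal{O}_X \to \mathcal{O}_C \to 0$ with $\mathcal{O}_X(K_X + p^e(C+A))$ and using adjunction $(K_X+C)|_C = K_C$, the restriction
$$\rho_e \colon H^0(X, K_X + p^e(C+A)) \;\twoheadrightarrow\; H^0\bigl(C,\, K_C + (p^e-1)C|_C + p^e A|_C\bigr)$$
becomes surjective.

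Next, with $\mathcal{L} = \mathcal{O}_X(C+A)$, the $e$-th iterated trace of Frobenius
$$\Tr^e \colon (F^e)_*\mathcal{O}_X(K_X + p^e(C+A)) \;\longrightarrow\; \mathcal{O}_X(K_X + C + A)$$
restricts compatibly to $C$, giving the commutative square
\begin{CD}
H^0(X, K_X + p^e(C+A)) @>\rho_e>> H^0(C, K_C + (p^e-1)C|_C + p^e A|_C) \\
@VV\Tr^e V @VV\tau_e V \\
H^0(X, K_X+C+A) @>>> H^0(C, K_C + A|_C),
\end{CD}
where $\tau_e$ is induced by restricting $\Tr^e$ to $C$. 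Granted that $s$ lies in the image of $\tau_e$ for some $e \gg 0$, I choose $\sigma$ with $\tau_e(\sigma) = s$, lift it to $\tilde{\sigma} \in H^0(X, K_X + p^e(C+A))$ via $\rho_e$, and conclude that $\Tr^e(\tilde{\sigma}) \in H^0(X, K_X+C+A)$ restricts to $s \neq 0$.

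The hard part will be the precise identification of $\tau_e$ together with the verification that $s$ is in its image. Heuristically, the Poincar\'e residue $\omega_X(C)|_C \cong \omega_C$ converts the trace on $X$ into the trace on $C$, modified by a twist along $(p^e-1)C|_C$; the Frobenius trace on the smooth curve $C$ is always surjective as a morphism of sheaves, and a Fujita-type vanishing on $C$ applied to the kernel (using the ampleness of $A|_C$) should promote this to surjectivity on $H^0$ for $e$ large. Setting up this compatibility between the $X$- and $C$-traces, and checking the surjectivity on global sections, is the main technical input of the proof and is presumably handled by preparatory lemmas developed elsewhere in the paper.
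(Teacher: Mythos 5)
There is a genuine gap, and in fact your strategy proves too much: if it worked for every preassigned $s$, it would show that the restriction map is \emph{surjective}, which is false in general by Example~\ref{cex-surjective}. The failure is located exactly where you defer the work. First, the map $\tau_e$ in your square is not well defined on the quotients you wrote down: commutativity would require $\Tr^e$ to send $\ker(\rho_e)=H^0(X, K_X+(p^e-1)C+p^eA)$ into $H^0(X,K_X+A)$, but a local computation with a $p$-basis $\{t,s\}$ ($t$ a local equation of $C$) shows that $\Tr_X^e(C+A)$ applied to a form with a pole of order $p^e-1$ along $C$ can still produce a pole of order $1$ along $C$ (the coefficient of $t^{p^e-1}s^{p^e-1}$ need not vanish). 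The sheaf-level statement that does hold is Lemma~\ref{trace-diagram}(2): the trace only induces a map $\Psi$ out of $F_*^e(\omega_{p^eC}(p^eA))$, i.e.\ out of the \emph{thickened} divisor $p^eC$, not out of $\omega_C((p^e-1)C|_C+p^eA|_C)$ on the reduced curve. Second, and more seriously, the surjectivity you hope to extract from ``Fujita-type vanishing on $C$ applied to the kernel'' is false: the kernel of $F_*\omega_C\to\omega_C$ is $B_C^1$, and Tango's example (see Question~\ref{Tango-cex} and its Answer) gives a smooth curve and an ample $A$ with $H^1(C,B_C^1(A))\neq 0$, so $\Tr_C^e(A)$ need not be surjective on global sections. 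Fujita vanishing is uniform in a nef perturbation of a \emph{fixed} sufficiently ample twist; it says nothing about the fixed ample $A$ you start with.

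The paper's proof avoids both problems by twisting only $A$ and not $C$: it uses $\Tr_{X,C}^e(A)\colon F_*^e(\omega_X(C+p^eA))\to\omega_X(C+A)$, whose restriction to $C$ is literally the curve trace $\Tr_C^e(A)\colon H^0(C,K_C+p^eA)\to H^0(C,K_C+A)$ (Lemma~\ref{trace-diagram}(1)), together with Serre vanishing $H^1(X,K_X+p^eA)=0$ for $e\gg0$. The whole content is then Theorem~\ref{curve-trace-nonzero}/Corollary~\ref{curve-trace-cor}: under the hypothesis $H^0(C,K_C+A)\neq0$, the curve trace $\Tr_C^e(A)$ is a \emph{nonzero} (not surjective) map. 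That nonvanishing is the real technical input --- it requires a case division on $\deg A$, a torsion-point argument on the Jacobian when $\deg A=1$, and an explicit local computation with a $p$-basis --- and it is entirely absent from your proposal. You would need to replace your surjectivity claim for $\tau_e$ with this nonvanishing statement (and route the argument through $\Tr_C^e(A)$ rather than through the reduced-restriction quotient) for the proof to close.
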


\begin{thm}[cf. Theorem~\ref{threefold-extension}]\label{0threefold-extension}
Let $k$ be an algebraically closed field of positive characteristic. 
Let $X$ be a smooth projective threefold over $k$. 
Let $S$ be a smooth prime divisor on $X$ and let $A$ be an ample Cartier divisor on $X$. 
Assume the following two conditions. 
\begin{enumerate}
\item{$K_X+S+A$ is nef.}
\item{$\kappa(S, K_S+A|_S)\neq 0$.}
\end{enumerate}
Then, there exists $m_0\in \mathbb Z_{>0}$ such that, for every integer $m\geq m_0$, 
the natural restriction map 
$$H^0(X, m(K_X+S+A))\to H^0(S, m(K_S+A|_S))$$
is surjective.
\end{thm}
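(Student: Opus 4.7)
The plan is to translate the surjectivity into a lifting problem and then to use the trace map of Frobenius to supply the lifts, compensating for the failure of Kodaira vanishing in positive characteristic. Write $p$ for the characteristic of $k$ and set $L := K_X + S + A$, so that $L|_S = K_S + A$ by adjunction. From the short exact sequence
$$0 \to \mathcal{O}_X(mL - S) \to \mathcal{O}_X(mL) \to \mathcal{O}_S(m(K_S+A)) \to 0,$$
the theorem reduces to showing that every $\sigma \in H^0(S, m(K_S+A))$ lifts to $H^0(X, mL)$. A direct computation rewrites $mL - S = K_X + (m-1)L + A$, a $K_X$-plus-ample divisor, so the naive vanishing of the obstruction $H^1$ would require a Kodaira-type statement that is unavailable here.

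The strategy is to lift each $\sigma$ through the Frobenius trace. For each $e \geq 1$ and any Cartier divisor $N$ on $X$, Grothendieck--Frobenius duality provides a trace $\Tr^e_X : F^e_*\mathcal{O}_X(K_X + p^e N) \to \mathcal{O}_X(K_X+N)$ and a parallel $\Tr^e_S$ on $S$, compatible with restriction via $K_X + S|_S = K_S$, giving a commutative square
\begin{CD}
H^0(X, K_X + p^e N) @>>> H^0(S, K_S + p^e N|_S) \\
@VV{\Tr^e_X}V @VV{\Tr^e_S}V \\
H^0(X, mL) @>>> H^0(S, m(K_S+A)).
\end{CD}
Choosing $N$ so that $K_X + N = mL$, the divisor $K_X + p^e N - S$ acquires an ample twist of order $p^e$ in the $A$-direction, so one expects Serre--Fujita vanishing in positive characteristic, applied to the coherent sheaf $F^e_*\mathcal{O}_X(K_X + p^e N - S)$, to force the top restriction map to be surjective once $e$ is large compared to $m$. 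A diagram chase then shows the image of the bottom restriction contains the image of $\Tr^e_S$ on $H^0$.

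The main obstacle, and where the hypothesis $\kappa(S, K_S+A) \neq 0$ enters, is to show that the image of $\Tr^e_S$ on global sections exhausts $H^0(S, m(K_S+A))$ for some $e$. The image of iterated Frobenius trace on sections forms an asymptotic test-like submodule of $H^0(S, m(K_S+A))$, and positive Iitaka dimension of $K_S+A$ is what should guarantee that for $m$ sufficiently large this submodule equals the whole space; without the hypothesis the trace image could be identically zero on sections. I expect the most delicate step to be the joint choice of $m_0$ and $e$ so that both the Fujita-type vanishing on $X$ and the trace-surjectivity on $S$ hold uniformly for every $m \geq m_0$, likely via a bootstrap in which $e$ grows with $m$.
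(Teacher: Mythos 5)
Your reduction is precisely the one the paper uses: with $L:=K_X+S+A$, the commutative square coming from the trace of Frobenius, together with Serre vanishing of $H^1(X,\omega_X(S+p^e(A+mL)))$ for $e\gg 0$, reduces the surjectivity of the restriction map to the surjectivity of the trace map $\Tr_S^e$ on $S$ for the divisor $A|_S+m(K_S+A)$. One minor correction: no bootstrap in which $e$ grows with $m$ is needed. The surface statement one needs (and the paper proves) is that $\Tr_S^e(A|_S+m(K_S+A))$ is surjective for \emph{every} $e\in\mathbb Z_{>0}$ once $m\geq m_1$; after that, for each fixed $m$ one simply takes $e$ large enough for Serre vanishing on $X$.

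The genuine gap is that the step you yourself identify as ``the main obstacle'' --- exhausting $H^0(S,m(K_S+A))$ by the image of $\Tr_S^e$ --- is where essentially all of the work lies, and you offer only the expectation that $\kappa(S,K_S+A)\neq 0$ ``should guarantee'' it. There is no soft argument here: the paper devotes two sections to exactly this point. For $\kappa=1$ it uses that $K_S+A$ is semiample and defines a ruled surface structure $\theta:S\to C$, reduces the surjectivity of the trace to $H^1(S,B_S^2(\text{ample}))=0$ via the exact sequence $0\to B_S^2\to F_*\omega_S\to\omega_S\to 0$, and proves $R^1\theta_*(B_S^2(A'))=0$ by an \'etale base change to $\mathbb P^1\times C'$ blown up at a point, which is toric and hence $F$-split. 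For $\kappa=2$ it contracts $(-1)$-curves so that $K_S+A$ becomes the pullback of an ample divisor under a birational morphism $h:S\to Z$, and then runs an induction over the blowups, adding exceptional $\mathbb P^1$'s one at a time along a carefully ordered chain of nef divisors and using $F$-splitness of $\mathbb P^1$ to propagate surjectivity of the trace at each step. Your intuition that the trace image ``could be identically zero'' without the hypothesis is confirmed by the paper's Section 8 (the Fermat cubic in characteristic two, where $\kappa=0$), but that only underscores that the hypothesis must enter through a substantive geometric argument, which your proposal does not supply.
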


To show the above two theorems, we use the trace map of Frobenius. 
This strategy is essentially the same as that used in \cite[Proposition~5.3]{Schwede2} and its proof. 
Let us see the idea of the proofs. 
Let $X$ be a smooth projective variety. 
Let $S$ be a smooth prime divisor on $X$ and 
let $A$ be an ample Cartier divisor on $X$. 
Then, for every $e\in\mathbb Z_{>0}$, 
we obtain the following commutative diagram by using the trace map of Frobenius: 
{\small 
$$\begin{CD}
H^0(X, K_X+S+p^eA)@>>>H^0(S, K_S+p^eA|_S)@>>>H^1(X, K_X+p^eA)\\
@VVV @VV\Tr_S^e(A|_S)V\\
H^0(X, K_X+S+A)@>\rho >> H^0(S, K_S+A|_S)
\end{CD}$$}
where the lower horizontal arrow $\rho$ is the natural restriction map and 
the upper horizontal sequence is exact. 
By the Serre vanishing theorem, for large $e\gg 0$, 
we obtain the vanishing $H^1(X, K_X+p^eA)=0$. 
Thus, to prove that the restriction map $\rho$ is surjective (resp. a nonzero map), 
it is sufficient to show that the trace map $\Tr_S^e(A|_S)$ 
is surjective (resp. a nonzero map). 
Therefore, to prove the above two theorems, 
we establish the following results on the trace map of Frobenius. 

\begin{thm}[cf. Theorem~\ref{curve-trace-nonzero}]\label{0trace-curve}
Let $k$ be an algebraically closed field of positive characteristic. 
Let $C$ be a smooth projective curve over $k$. 
Let $A$ be an ample Cartier divisor on $C$. 
If $H^0(C, K_C+A)\neq 0$, then the trace map 
$$\Tr_C^e(A):H^0(C, K_C+p^eA)\to H^0(C, K_C+A)$$
is a nonzero map for every $e\in\mathbb Z_{>0}$. 
\end{thm}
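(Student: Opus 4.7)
The strategy is to exhibit, for each $e \geq 1$, a global section $s \in H^0(C, \omega_C(p^e A))$ whose trace is visibly non-zero at a carefully chosen closed point $P \in C$. The key tool is the local form of the iterated Cartier operator: if $u$ is a local parameter at $P$ and $\sigma = \sum_n a_n u^n \, du$ is a local section of $\omega_C$ near $P$, then
\[
\Tr_C^e(A)(\sigma) \;=\; \sum_{j \geq 0} a_{p^e j + p^e - 1}^{1/p^e}\, u^j \, du,
\]
so the ``constant term'' of $\Tr_C^e(A)(\sigma)$ at $P$ is $a_{p^e - 1}^{1/p^e}\, du$, picked out from a single Taylor coefficient. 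This reduces non-vanishing of the trace to arranging a single coefficient in the Taylor expansion of $s$ to be non-zero.

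I would first choose a closed point $P \in C$ with $P \notin \Supp A$ satisfying the vanishing
\[
H^1\bigl(C,\, \omega_C(p^e A - p^e P)\bigr) \;=\; 0.
\]
By Serre duality this is equivalent to $H^0(C, \mathcal{O}_C(p^e(P - A))) = 0$. If $\deg A \geq 2$, the line bundle has negative degree and the vanishing is automatic. If $\deg A = 1$, the hypothesis $H^0(C, \omega_C(A)) \neq 0$ combined with Riemann--Roch forces $g \geq 1$; the set of $P$ violating the vanishing is then contained in the finite set $\{P \in C : P - A \in \Pic^0(C)[p^e]\}$, so a valid $P$ still exists since $C$ has infinitely many closed points.

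From the short exact sequence
\[
0 \to \omega_C(p^e A - p^e P) \to \omega_C(p^e A) \to \omega_C(p^e A) \otimes \mathcal{O}_{p^e P} \to 0
\]
and the vanishing above, the restriction map $H^0(C, \omega_C(p^e A)) \twoheadrightarrow H^0(p^e P, \omega_C(p^e A) \otimes \mathcal{O}_{p^e P})$ is surjective. Since $P \notin \Supp A$, the target is $\mathcal{O}_{p^e P} \cdot du$, a $k$-vector space of dimension $p^e$ containing $u^{p^e - 1} \, du$. Lift it to a global section $s \in H^0(C, \omega_C(p^e A))$ with $s \equiv u^{p^e - 1} \, du \pmod{u^{p^e}}$ near $P$. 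The local formula above then yields $\Tr_C^e(A)(s) \equiv du \pmod u$ at $P$, so $\Tr_C^e(A)(s)$ is a non-zero global section of $\omega_C(A)$, proving that $\Tr_C^e(A)$ is a non-zero map.

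I expect the main obstacle to be the low-degree case $\deg A = 1$: there the $H^1$ vanishing needed to lift the local model is not automatic, and a genericity argument invoking $g \geq 1$ (forced by the hypothesis $H^0(\omega_C(A)) \neq 0$) together with the finiteness of $\Pic^0(C)[p^e]$ is required to locate a workable point $P$. The rest of the argument amounts to a direct computation with the Cartier operator together with standard Serre duality on a curve.
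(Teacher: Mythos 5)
Your proposal is correct and follows essentially the same route as the paper: both arguments reduce non-vanishing of the trace to the local Cartier-operator computation $\Tr(u^{p^e-1}\,du)=du$ at a point $P$ for which $H^1(C,\omega_C(p^eA-p^eP))=0$, with $\deg A=1$ as the only delicate case (and $g=0$ disposed of separately). The one minor difference is how such a $P$ is produced when $\deg A=1$: you invoke finiteness of $\Pic^0(C)[p^e]$, whereas the paper assumes $p^eA\sim p^eQ$ for every $Q$ and derives a contradiction from a nontrivial prime-to-$p$ torsion class; both are valid.
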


\begin{thm}[cf. Theorem~\ref{trace-surface}]\label{0trace-surface}
Let $k$ be an algebraically closed field of positive characteristic. 
Let $S$ be a smooth projective surface over $k$. 
Let $A$ be an ample Cartier divisor on $S$. 
Assume the following two conditions. 
\begin{enumerate}
\item{$K_S+A$ is nef.}
\item{$\kappa(S, K_S+A)\neq 0$.}
\end{enumerate}
Then, there exists $m_1\in\mathbb Z_{>0}$ such that 
the trace map 
$$\Tr_S^e(A+m(K_S+A)):$$
$$H^0(S, K_S+p^e(A+m(K_S+A)))\to H^0(S, K_S+(A+m(K_S+A)))$$
is surjective for every integer $m\geq m_1$ and for every $e\in\mathbb Z_{>0}$. 
\end{thm}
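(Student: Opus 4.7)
The plan is to combine the Frobenius trace's short exact sequence with Fujita's vanishing, factor the iterated trace into single Frobenius steps, and dispatch the finitely many residual cases using the semi-ampleness of $K_S+A$ afforded by abundance for positive-characteristic surfaces. Write $L:=K_S+A$ and $D_m:=A+mL$; since $A$ is ample and $L$ is nef, $D_m$ is ample for every $m\ge 0$. The trace at level $e$ sits in
\[
0 \to B_e\otimes\mathcal{O}_S(D_m) \to F^e_*\omega_S(p^eD_m) \xrightarrow{\Tr^e_S(D_m)} \omega_S(D_m) \to 0,
\]
where $B_e:=\Ker(F^e_*\omega_S\to\omega_S)$ is locally free. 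Hence the $H^0$-surjectivity of $\Tr^e_S(D_m)$ is implied by $H^1(S,\,B_e\otimes\mathcal{O}_S(D_m))=0$.

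Using the factorization $\Tr^e_S(D_m)=\Tr^{e-1}_S(D_m)\circ F^{e-1}_*\Tr^1_S(p^{e-1}D_m)$ iteratively, the map $\Tr^e_S(D_m)$ on $H^0$ is a composition of single-step traces $\Tr^1_S(p^iD_m)$ for $i=0,1,\ldots,e-1$. It therefore suffices to show that each $\Tr^1_S(p^iD_m)$ is surjective on $H^0$, which by the $e=1$ version of the exact sequence reduces to the vanishing $H^1(S,\,B_1\otimes\mathcal{O}_S(p^iD_m))=0$ for every $i\ge 0$. Crucially, $B_1$ is now a single fixed locally free sheaf, independent of $e$ and $m$. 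Fujita's vanishing theorem then yields an integer $N$ (depending only on $S$ and $A$) such that the vanishing holds whenever $p^iD_m-NA=(p^i-N)A+p^imL$ is nef. This is automatic whenever $p^i\ge N$, so only finitely many residual indices $i<\log_p N$ remain.

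For the residual indices one needs $p^imL-(N-p^i)A$ to be nef, which would be easy if $L$ were ample but is subtle because $L$ is only nef. This is where the hypothesis $\kappa(S,K_S+A)\ge 1$ enters decisively: by the abundance theorem for positive-characteristic surfaces, $L$ is semi-ample, so some multiple $nL\sim f^*H$ is the pullback of an ample Cartier divisor $H$ along an Iitaka fibration $f:S\to Y$ with $\dim Y\ge 1$. Splitting $m$ modulo $n$ and writing $p^imL$ as $f^*(\text{an ample of growing degree})+(\text{bounded})$ as $m$ varies, the Leray spectral sequence for $f$ combined with Serre vanishing on $Y$ dispatches each of the finitely many residual vanishings once $m$ is sufficiently large. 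The main technical obstacle lies precisely in this step: the nef-but-not-ample behavior of $L$ obstructs any naive Serre/Fujita bound, and the Iitaka hypothesis is exactly what provides the fibration structure needed to complete the argument. As an alternative route for the residual indices, one may restrict the trace map to a general smooth curve in a sufficiently large linear system and invoke Theorem~\ref{0trace-curve} through an adjunction-type diagram chase.
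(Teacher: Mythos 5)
Your reduction is the same as the paper's: write $D_m=A+mL$ with $L=K_S+A$, use the exact sequence $0\to B_S^2(D_m)\to F_*\omega_S(pD_m)\to\omega_S(D_m)\to 0$, factor $\Tr^e$ into single Frobenius steps via Lemma~\ref{e-vs-e+1}, so that everything reduces to $H^1(S,B_S^2(p^iD_m))=0$ for $0\le i\le e-1$, and invoke Fujita vanishing for the large twists and semi-ampleness of $L$ (abundance) for the rest. Up to that point the proposal matches Steps~1--3 of the paper's Theorem~\ref{trace-surface-1} and the setup of Section~6. The problem is the residual cases, which you dispatch with the sentence ``the Leray spectral sequence for $f$ combined with Serre vanishing on $Y$ dispatches each of the finitely many residual vanishings once $m$ is sufficiently large.'' This does not work as stated. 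The Leray sequence for the Iitaka fibration $f:S\to Y$ reads
$$0\to H^1\bigl(Y, f_*(B_S^2(p^iA))\otimes \mathcal O_Y(M)\bigr)\to H^1\bigl(S, B_S^2(p^iD_m)\bigr)\to H^0\bigl(Y, R^1f_*(B_S^2(p^iA))\otimes \mathcal O_Y(M)\bigr)\to 0,$$
where $M$ is the (growing) ample divisor on $Y$ with $f^*M\sim p^imL$ (up to the bounded discrepancy you mention). Serre vanishing on $Y$ kills only the first term. The third term is \emph{not} killed by twisting with pullbacks of amples from $Y$: when $\kappa=2$ the sheaf $R^1f_*(B_S^2(p^iA))$ is a skyscraper supported on the image of the exceptional locus, and when $\kappa=1$ it is a coherent sheaf on the curve $Y$; in both cases $H^0$ of its ample twist only grows with $m$. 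So your argument silently requires $R^1f_*(B_S^2(A'))=0$ for ample $A'$, and establishing exactly this vanishing is the technical heart of the paper, occupying Proposition~\ref{ruled-R1} ($\kappa=1$: reduction of singular fibers to a single blowup of $\mathbb P^1\times C$, \'etale base change via Lemma~\ref{cartier-etale}, and $F$-splitting of toric surfaces) and Proposition~\ref{birat-B-vanish} ($\kappa=2$: induction on the number of blowups, a combinatorial chain of exceptional divisors keeping everything nef, and restriction of the trace to the exceptional $(-1)$-curves, which are $F$-split $\mathbb P^1$'s). Your fallback of restricting to a general curve and citing Theorem~\ref{0trace-curve} also cannot close this gap, since that theorem only gives that the trace on a curve is \emph{non-zero}, not surjective, and non-vanishing on a general ample curve says nothing about the cohomology supported over the $L$-trivial locus where the obstruction actually lives.
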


We also consider whether 
Theorem~\ref{0threefold-extension} and Theorem~\ref{0trace-surface} hold for 
the case where $\kappa(S, K_S+A)=0$. 
Let us compare Theorem~\ref{0threefold-extension} with
the following basepoint free conjecture (cf. \cite[Theorem~3.3]{KM}).

\begin{conje}\label{0bpf}
Let $k$ be an algebraically closed field of positive characteristic. 
Let $X$ be a smooth projective threefold over $k$. 
Let $S$ be a smooth prime divisor on $X$ and 
let $A$ be an ample Cartier divisor on $X$ such that 
$K_X+S+A$ is nef. 
Then, $|b(K_X+S+A)|$ is basepoint free for every $b\gg 0$. 
\end{conje}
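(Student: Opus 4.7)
The plan is to imitate the characteristic-zero Kawamata--Shokurov proof, substituting the trace-map-of-Frobenius technology of this paper for Kodaira vanishing. I would split the base locus of $|b(K_X+S+A)|$ into its intersection with $S$ and its complement, and treat each separately.

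For base points on $S$, observe that $(K_X+S+A)|_S = K_S + A|_S$ is nef on $S$ with $A|_S$ ample. Hence by the basepoint free theorem for klt surfaces in positive characteristic (available by the X-method adapted to characteristic $p$), $|b(K_S + A|_S)|$ is basepoint free on $S$ for all $b \gg 0$. When $\kappa(S, K_S+A|_S) \geq 1$, Theorem~\ref{0threefold-extension} provides the surjectivity
$$H^0(X, b(K_X+S+A)) \longrightarrow H^0(S, b(K_S+A|_S))$$
for $b \gg 0$, and combining these shows that $\Bs|b(K_X+S+A)| \cap S = \emptyset$. For a base point $x \notin S$, the natural strategy is to slice by a smooth prime divisor $T \in |mA|$ through $x$ with $m \gg 0$, chosen by Bertini to be smooth and to meet $S$ transversally. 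Applying the Frobenius-trace commutative diagram of the introduction with $T$ replacing $S$, and invoking Theorem~\ref{0trace-surface} on $T$, one reduces to producing a section on $T$ nonzero at $x$, which in turn follows from the surface basepoint free theorem on $T$. Comparing the resulting section of $b(K_X+T+A)$ with $b(K_X+S+A)$ via the equivalence $T \sim mA$, one should obtain a section of $b(K_X+S+A)$ nonvanishing at $x$ after adjusting $b$.

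The principal difficulty is the residual case $\kappa(S, K_S+A|_S) = 0$, which lies outside Theorem~\ref{0threefold-extension}. In this regime $K_S + A|_S$ is nef with vanishing Iitaka dimension, and the trace argument gives no nontrivial section on $S$ to extend, so even the non-emptiness of $|b(K_X+S+A)|$ becomes unclear: it essentially amounts to a non-vanishing conjecture for threefolds in positive characteristic, itself a fundamental open problem. A secondary obstacle is the off-$S$ step, since for a given $x \notin S$ the auxiliary surface $T \in |mA|$ need not satisfy $\kappa(T, K_T+A|_T) \neq 0$, and ensuring this hypothesis (perhaps by a careful choice of $m$ or a preliminary blow-up) requires additional positivity input. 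Any proof of Conjecture~\ref{0bpf} will have to grapple with these issues, presumably via a Frobenius-based substitute for Shokurov's non-vanishing theorem together with structure theory for nef pairs $(S, A|_S)$ of Iitaka dimension zero.
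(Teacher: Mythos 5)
The statement you are addressing is presented in the paper as Conjecture~\ref{0bpf}: the paper offers no proof of it, and uses it only as a hypothetical benchmark (``if Conjecture~\ref{0bpf} holds, then Theorem~\ref{0threefold-extension} also holds when $\kappa(S,K_S+A)=0$''). So there is no argument in the paper to compare yours against, and, as you yourself concede in your final paragraph, what you have written is not a proof: the case $\kappa(S,K_S+A)=0$ is left entirely open, and you correctly observe that even the non-emptiness of $|b(K_X+S+A)|$ in that regime amounts to a non-vanishing problem for threefolds in positive characteristic. A sketch that defers the decisive cases to ``a Frobenius-based substitute for Shokurov's non-vanishing theorem'' has not established the statement; the honest verdict is that the conjecture remains open and your proposal reproduces only the partial result the paper actually proves, namely extension over $S$ when $\kappa(S,K_S+A)\neq 0$ (Theorem~\ref{0threefold-extension}).

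Beyond that admitted gap, the off-$S$ step contains a concrete error. If $T\in|mA|$ passes through $x\notin S$, then $K_X+T+A\sim K_X+(m+1)A$, which is not linearly equivalent to any multiple of $K_X+S+A$; a section of $b(K_X+T+A)$ not vanishing at $x$ therefore yields no section of $b(K_X+S+A)$, and ``comparing via the equivalence $T\sim mA$'' does not bridge the two linear systems. The natural adjoint restriction to $T$ of the divisor you actually care about is $b(K_X+S+A)|_T=b(K_T+S|_T+(1-m)A|_T)$, which for $m\gg 0$ is not of the adjoint-plus-ample shape required by Theorem~\ref{0trace-surface}; nor is it guaranteed that $K_T+A|_T$ is nef or that $\kappa(T,K_T+A|_T)\neq 0$. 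So even granting the on-$S$ analysis, the argument for $\Bs|b(K_X+S+A)|\cap(X\setminus S)=\emptyset$ does not go through as written.
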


\begin{rem}
After a first version of this paper was circulated, 
\cite[Theorem~1.2]{BW} proved that, in the above situation, 
$|b(K_X+S+A)|$ is basepoint free for every sufficiently large and divisible $b$ in characteristic $p>5$. 
\end{rem}

If Conjecture~\ref{0bpf} holds, then 
Theorem~\ref{0threefold-extension} also holds when $\kappa(S, K_S+A)=0$. 
It is natural to ask whether Theorem~\ref{0trace-surface} also holds when $\kappa(S, K_S+A)=0$. 
Unfortunately, the answer is negative. 
We can construct the following example in characteristic two.  

\begin{thm}[cf. Theorem~\ref{trace-cex}]\label{0trace-surface-cex}
Let $k$ be an algebraically closed field of characteristic two. 
Then, there exists a smooth projective surface $S$ over $k$ 
such that 
\begin{enumerate}
\item{$-K_S=:A$ is ample. In particular $\kappa(S, K_S+A)=0$. }
\item{For  every $e\in\mathbb Z_{>0}$, 
the trace map 
$$\Tr_S^e(A):H^0(S, K_S+2^eA)\to H^0(S, K_S+A)$$
is the zero map. }
\end{enumerate}
\end{thm}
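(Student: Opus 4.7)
The strategy is first to reduce to the case $e = 1$ and then to exhibit an explicit surface. Because $A = -K_S$, the relevant line bundles simplify to $\omega_S(A) = \mathcal{O}_S$ and $\omega_S(2A) = \mathcal{O}_S(A)$, so in particular $H^0(S, \omega_S(A)) = k$. Using the projection formula
$$F_*^e\omega_S(2^eA) \;=\; (F_*^e\omega_S)\otimes\mathcal{O}_S(A),$$
the iterated trace admits the factorisation
$$\Tr_S^e(A) \;=\; \Tr_S^1(A)\circ F_*\bigl(\Tr_S^{e-1}(2A)\bigr)\colon\; F_*^e\omega_S(2^eA) \longrightarrow F_*\omega_S(2A) \longrightarrow \omega_S(A),$$
which on global sections (using that $F$ is affine) becomes a factorisation of linear maps into $k$. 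Consequently if $\Tr_S^1(A)$ vanishes on $H^0$, so does every $\Tr_S^e(A)$ for $e \geq 1$. The entire theorem therefore reduces to producing a smooth projective surface $S$ over $k$ with $-K_S$ ample for which the single trace
$$\Tr_S^1(A)\colon H^0(S,\mathcal{O}_S(-K_S)) \longrightarrow k$$
is identically zero.

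For the construction I would search among smooth del Pezzo surfaces in characteristic two whose Frobenius splitting fails in a particularly strong way. Natural candidates include del Pezzo surfaces of low anticanonical degree arising from wild or inseparable constructions peculiar to characteristic two — for instance smooth members of explicit families inside weighted projective spaces, resolutions of specific wild quotient surfaces, or blowups of $\mathbb{P}^2$ at carefully chosen configurations. The design criterion is that every global section of $-K_S$ be annihilated by the twisted Cartier operator $F_*\omega_S\otimes\mathcal{O}_S(A)\to\omega_S(A)$: locally, each such section should reduce, modulo a chosen trivialisation of $\omega_S(A) = \mathcal{O}_S$, to a sum of $p$th powers, which the Cartier operator kills.

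The main obstacle is the explicit construction itself. The reduction above localises the cohomological content of the theorem to a single concrete check on $H^0(S,-K_S)$, but actually writing down a \emph{smooth} projective surface in characteristic two with $-K_S$ ample and with all global sections of $-K_S$ in the kernel of the Frobenius trace is delicate — one must sit in a very narrow corner of the classification. The hypothesis $p = 2$ is essential here: only for this prime is the range of relevant exponents tight enough that every monomial contributing to a section of $-K_S$ becomes a $p$th power (and hence is killed by the Cartier operator), permitting the desired identical vanishing.
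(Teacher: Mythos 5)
Your reduction to the case $e=1$ is correct and matches the paper's first step: by the composition formula $\Tr_S^{e}(A)=\Tr_S^{1}(A)\circ F_*(\Tr_S^{e-1}(pA))$ it suffices to kill the single trace $\Tr_S^1(A)$ on global sections, and since $\omega_S(A)=\mathcal O_S$ this is one linear functional on $H^0(S,-K_S)$. But from that point on your proposal stops being a proof: you say you ``would search among smooth del Pezzo surfaces \dots whose Frobenius splitting fails in a particularly strong way'' and you explicitly concede that ``the main obstacle is the explicit construction itself.'' That construction \emph{is} the theorem — it is an existence statement, and the reduction you carry out is the routine part. Without naming a specific surface and verifying the vanishing, nothing has been proved.

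The paper's example is the Fermat cubic surface $S=\{x^3+y^3+z^3+w^3=0\}\subset\mathbb P^3$ in characteristic two (the unique non-$F$-split smooth del Pezzo surface, by Hara and Smith). The verification does not proceed by a direct local analysis on $S$; instead it uses the adjunction diagram of Lemma~\ref{trace-diagram} to compare $\Tr_S^1(-K_S)$ with the relative trace $\Tr_{\mathbb P^3,S}^1(-K_{\mathbb P^3}-S)$ on the ambient $\mathbb P^3$. The two restriction maps in that diagram are isomorphisms because $H^0(\mathbb P^3,\mathcal O(-3))=H^0(\mathbb P^3,\mathcal O(-2))=0$ and $H^1$ of any line bundle on $\mathbb P^3$ vanishes, so the problem becomes the vanishing of a trace on the four-dimensional space $H^0(\mathbb P^3,\omega_{\mathbb P^3}(S+2H))$. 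One then writes down the explicit basis $\eta_1,\eta_X,\eta_Y,\eta_Z$ with $\eta_\ast=\frac{\ast}{X^3+Y^3+Z^3+1}\,dX\wedge dY\wedge dZ$, uses $p^{-1}$-linearity, multiplies numerator and denominator by $X^3+Y^3+Z^3+1$ to clear the pole into a square, and checks via the local formula of Remark~\ref{trace-local} that every resulting monomial has some exponent with residue $0$ modulo $2$, hence is annihilated. Your heuristic that sections should ``reduce to sums of $p$th powers'' gestures at this computation but is not a substitute for it; you would need to supply the surface, the basis, and the monomial check to complete the argument.
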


Moreover, Theorem~\ref{0trace-surface-cex} also shows that 
we can not generalize Theorem~\ref{0trace-curve} to dimension two.

In the appendix to this paper (Section~9), we establish the following analogue of 
the Hacon--M\textsuperscript{c}Kernan extension theorem for surfaces. 

\begin{thm}[cf. Theorem~\ref{surface-ext}]\label{0surface-ext}
Let $k$ be an algebraically closed field of positive characteristic. 
Let $X$ be a smooth projective surface over $k$ and 
let $C$ be a smooth prime divisor on $X$. 
Let $\Delta:=C+B$, where 
$B$ is an effective \Q-divisor on $X$ which satisfies the following properties: 
\begin{enumerate}
\item{$C\not\subset\Supp B$, $\llcorner B\lrcorner=0$ and 
$(X, \Delta)$ is plt, }
\item{$B\sim_{\mathbb Q}A+F$, where $A$ is an ample \Q-divisor and 
$F$ is an effective \Q-divisor such that $C\not\subset \Supp F$, and}
\item{No prime component of $\Delta$ is contained in the stable base locus of $K_X+\Delta.$}
\end{enumerate}
Then, there exists an integer $m_0>0$ such that, for every integer $m>0$, 
the restriction map 
$$H^0(X, mm_0(K_X+\Delta))\to H^0(C, mm_0(K_X+\Delta)|_C)$$
is surjective.
\end{thm}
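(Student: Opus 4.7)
The plan is to extend the Frobenius trace strategy of the main body of the paper, as outlined in the introduction, to the log setting of this theorem. I would first fix $m_0\in\mathbb Z_{>0}$ sufficiently divisible so that $m_0B$, $m_0A$, $m_0F$, and $m_0(K_X+\Delta)$ are all integer Cartier, and large enough that, by condition (3), the linear system $|m_0(K_X+\Delta)|$ contains an effective divisor $G$ with $C\not\subset\Supp G$. For each $m\geq 1$ set $L_m:=mm_0(K_X+\Delta)-K_X-C$, a Cartier divisor on $X$. Combining condition (2) with the relation $m_0(K_X+\Delta)\sim G$ gives the $\mathbb Q$-linear equivalence
\[
L_m\sim_{\mathbb Q} A+F_m,\qquad F_m:=F+\tfrac{mm_0-1}{m_0}\,G,
\]
where $A$ is ample and $F_m$ is an effective $\mathbb Q$-divisor whose support avoids $C$.

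Next I would write down the log Frobenius trace diagram in exact analogy with the diagram in the introduction: for each $e\geq 1$,
$$\begin{CD}
H^0(X,\omega_X(C+p^eL_m))@>>>H^0(C,\omega_C(p^eL_m|_C))@>>>H^1(X,\omega_X(p^eL_m))\\
@VVV @VV\Tr_C^eV\\
H^0(X,\omega_X(C+L_m))@>>>H^0(C,\omega_C(L_m|_C))
\end{CD}$$
Via the adjunction $\omega_X(C)|_C=\omega_C$, the bottom-right horizontal arrow is precisely the restriction map of the statement. A diagram chase shows that it is surjective once one produces, for each $m$, some $e\gg 0$ satisfying (a) $H^1(X,\omega_X(p^eL_m))=0$ and (b) $\Tr_C^e$ surjective on $H^0$.

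For (b), plt adjunction from condition (1) yields $(K_X+\Delta)|_C=K_C+B_C$ with $(C,B_C)$ klt, and the decomposition above shows that $L_m|_C$ inherits the ample contribution $A|_C$. Choosing $m_0$ large enough that $L_m|_C$ has sufficiently large degree for every $m\geq 1$, the Frobenius trace $F^e_*\omega_C(p^eL_m|_C)\to\omega_C(L_m|_C)$---automatically surjective as a sheaf map on the smooth curve $C$, since $F^e$ is finite flat there---has a kernel whose $H^1$ vanishes by Serre vanishing on $C$, so the induced map on $H^0$ is surjective for all $e\geq 1$. This may be viewed as a strengthening of Theorem~\ref{0trace-curve} from non-zero to surjective in the high-degree regime.

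The main obstacle is (a), since in positive characteristic Kodaira vanishing fails and $L_m$ is only big. My approach is to exploit the decomposition $L_m\sim_{\mathbb Q}A+F_m$: after clearing denominators one has the integer linear equivalence $m_0L_m\sim m_0A+m_0F_m$, so for each $e$ the effective Cartier divisor $m_0p^eF_m$ yields an inclusion $\omega_X(m_0p^eA)\hookrightarrow\omega_X(m_0p^eL_m)$ of line bundles whose cokernel is supported on $m_0p^eF_m$. For $e\gg 0$, Serre vanishing on $X$ kills $H^1(X,\omega_X(m_0p^eA))$, while the positivity of $A$ transverse to $\Supp F_m$ (together with $C\not\subset\Supp F_m$) controls the $H^1$-contribution from the one-dimensional scheme $m_0p^eF_m$; a small amount of further bookkeeping (descent from $m_0p^eL_m$ back to $p^eL_m$, absorbed either into the choice of $m_0$ or into an extra Frobenius iteration) yields the desired vanishing, which combined with (b) completes the proof.
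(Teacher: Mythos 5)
Your reduction to the two conditions (a) $H^1(X,\omega_X(p^eL_m))=0$ and (b) surjectivity of $\Tr_C^e$ is a reasonable adaptation of the paper's Frobenius-trace strategy, and (b) can indeed be pushed through (for $(K_X+\Delta)\cdot C>0$ one takes $m_0$ large so that $\deg(L_m|_C)$ exceeds the Tango-type bound uniformly in $m$ and $e$; for $(K_X+\Delta)\cdot C=0$ adjunction forces $C\simeq\mathbb P^1$, which is $F$-split). The genuine gap is in (a), and it is not a matter of bookkeeping: the vanishing $H^1(X,\omega_X(p^eL_m))=0$ is simply false in general under the stated hypotheses. The divisor $L_m\sim_{\mathbb Q}A+F_m$ is big but need not be nef, because $F$ (hence $F_m$) may contain a curve $E$ with $E^2<0$, $B\cdot E<0$ and $(K_X+\Delta)\cdot E=0$; this is compatible with all of (1)--(3) (e.g.\ $K_X+\Delta$ semi-ample contracting a $(-1)$-curve $E\subset\Supp B$ disjoint from $C$). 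For such $E$ one has $L_m\cdot E<0$ for every $m$, so $\deg\bigl(\omega_X(p^eL_m)|_E\bigr)\to-\infty$; writing $\pi:X\to Y$ for the contraction of $E$, the Leray sequence gives a surjection $H^1(X,\omega_X(p^eL_m))\twoheadrightarrow H^0(Y,R^1\pi_*\omega_X(p^eL_m))\neq 0$ for every $e$ (the $H^2$ term on $Y$ vanishes by duality since $\pi_*L_m$ is big). In your proposed exact sequence the obstruction sits exactly in the cokernel of $\omega_X(m_0p^eA)\hookrightarrow\omega_X(m_0p^eL_m)$, which is a line bundle on the thickened scheme $m_0p^eF_m$ whose degree on $E$ is arbitrarily negative; no transversality of $A$ to $\Supp F_m$ controls this, and the cited asymptotic vanishing \cite[Theorem~2.6]{T1} requires nef and big, not merely big.

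To repair the argument you would first have to contract all such negative curves, which is precisely what the paper does and why its proof of Theorem~\ref{surface-ext} does not use the trace map at all: it runs the MMP to reduce to the case where $K_X+\Delta$ is nef (checking that each contracted curve is a $(-1)$-curve disjoint from $C$, so that both $H^0$'s are unchanged), invokes abundance \cite{Fujita3} to get a semi-ample fibration $f:X\to R$ with $m_2(K_X+\Delta)=f^*H$ for $H$ ample, shows $f_*\mathcal O_C=\mathcal O_{f(C)}$, and concludes by Serre vanishing on $R$. Your approach as written stops short of the step that makes the relevant $H^1$ controllable, so the proof is incomplete.
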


However, the proof of Theorem~\ref{0surface-ext} 
does not use the trace map of Frobenius. 
We use instead results on the minimal model theory established in \cite{T2} and \cite{T3}.

\begin{nasi}[Overview of contents]
In Section~1, we summarize the notation. 
In Section~2, we give the definition and some basic properties of the trace map of Frobenius. 
The trace map of Frobenius is obtained by applying the functor $\mathcal Hom_{\mathcal O_X}(-, \omega_X)$ 
to the Frobenius map $\mathcal O_X\to F_*\mathcal O_X.$ 
In Section~3, we recall some known facts about the Cartier operator. 
We can consider the trace map of Frobenius as a Cartier operator. 
The Cartier operator is defined by the de Rham complex. 
We use the Cartier operator to consider the relation between 
the trace map of Frobenius and \'etale base change. 
In Section~4, we prove Theorem~\ref{0surface-special-extension} and Theorem~\ref{0trace-curve}. 
In Section~5, we prove Theorem~\ref{0trace-surface} when $\kappa=1$. 
In Section~6, we prove Theorem~\ref{0trace-surface} when $\kappa=2$. 
In Section~7, by using Theorem~\ref{0trace-surface}, we show Theorem~\ref{0threefold-extension}. 
In Section~8, we prove Theorem~\ref{0trace-surface-cex}. 
In Section~9, we prove Theorem~\ref{0surface-ext}. 
\end{nasi}

\begin{nasi}[Overview of related literature] 
We give some general references to the literature related to this paper in connection with 
the basepoint free theorem, the extension theorem, the trace map of Frobenius 
and the minimal model theory in positive characteristic. 

{\em Basepoint free theorem and extension theorem}. 
The motivation of this paper comes from the basepoint free theorem and the extension theorem 
in characteristic zero. 
Thus, let us summarize some known results on this topic. 
Kawamata and Shokurov established the basepoint free theorem for klt pairs 
(cf. \cite{KMM}, \cite{KM}). 
\cite{Ambro} generalized this result (cf. \cite{Fujino}). 
The extension theorem is established by Hacon and M\textsuperscript{c}Kernan (\cite[Theorem~5.4.21]{HM}). 
This theorem is a key to proving the existence of flips (\cite{BCHM}). 
For related topics, see \cite{DHP} and \cite{FG}. 

{\em The trace map of Frobenius}.  
At the heart of this paper is the trace map of Frobenius. 
This map plays a crucial role in the theory of $F$-singularities 
(cf. \cite{BST}, \cite{Schwede1}, \cite{Schwede2}). 
Moreover, \cite{CHMS} and \cite{Mustata} establish results related to birational geometry 
by using the trace map of Frobenius and the theory of $F$-singularities. 
For related topics, see \cite{BSTZ} and \cite{TW}. 
 
{\em Minimal model theory in positive characteristic}. 
For results on the minimal model theory in positive characteristic, we refer to 
\cite{Fujita3}, \cite{KK}, \cite{T2} and \cite{T3} for the case of surfaces and to 
\cite{Birkar}, \cite{BW}, \cite{CTX}, \cite{HX}, \cite{Kawamata}, \cite{Keel}, \cite{Kollar} and \cite{Xu} 
for the case of threefolds.  
\end{nasi}

\begin{ack}
The author would like to 
thank Professor Osamu Fujino for many comments and discussions. 
He would also like to thank Professor Karl Schwede for numerous helpful comments. 
He thanks Professor Atsushi Moriwaki for warm encouragement. 
He also thanks the referee for many valuable comments and suggestions. 
The author was partially supported by 
the Research Fellowships of the Japan Society for the Promotion of Science for Young Scientists (24-1937). 
\end{ack}

\section{Notation}
We freely use the notation and terminology in \cite{KM}. 
We do not distinguish in notation between 
invertible sheaves and divisors. 
For example, 
we often write $L+M$ for 
invertible sheaves $L$ and $M$. 
For a coherent sheaf $F$ and a Cartier divisor $L$, 
we define $F(L):=F\otimes \mathcal O_X(L)$. 

Throughout this paper, 
we work over an algebraically closed field $k$ 
of positive characteristic and 
let ${\rm char}\,k=:p>0.$
In this paper, {\em a variety} means 
an integral scheme, separated and of finite type over $k$. 
A {\em curve} (resp. a {\em surface}) means a variety of dimension one (resp. two).

\section{The trace map of Frobenius}

In this section, we define the trace map of Frobenius and 
we discuss some fundamental properties. 
We only use the smooth case. 
For the singular case, see \cite[Section~2]{Schwede2}. 

\begin{prop}\label{tracedef}
Let $X$ be a smooth projective variety. 
Let $E$ be an effective \Z-divisor and 
let $D$ be a \Z-divisor. 
Then, for every positive integer $e$, 
there exists a natural $\mathcal O_X$-module homomorphism 
$$\Tr_{X,E}^e(D):F^e_*(\omega_X(E+p^eD))\to 
\omega_X(E+D).$$
We call this a trace map. 
\end{prop}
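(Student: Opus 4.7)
The plan is to construct the trace map as the $\mathcal{H}om(-, \omega_X(E+D))$-dual of a single natural composite, making crucial use of the effectivity of $E$. First, I would form the composite
$$\mathcal{O}_X \xrightarrow{F^e} F^e_*\mathcal{O}_X \hookrightarrow F^e_*\mathcal{O}_X\bigl((p^e-1)E\bigr),$$
where the first arrow is the $e$-th iterated Frobenius and the second is the pushforward of the natural inclusion $\mathcal{O}_X \hookrightarrow \mathcal{O}_X((p^e-1)E)$, which exists precisely because $E \geq 0$.

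Next I would apply $\mathcal{H}om_{\mathcal{O}_X}(-, \omega_X(E+D))$ to this composite. The image of $\mathcal{O}_X$ is simply $\omega_X(E+D)$. For the other term, Grothendieck duality for the finite morphism $F^e\colon X \to X$, together with the identification $F^{e!}\omega_X(E+D) \cong \omega_X(p^e(E+D))$ (which in the smooth case follows from $\omega_{X/X} \cong \omega_X^{1-p^e}$ combined with the projection formula for $F^{e!}$), yields
$$\mathcal{H}om_{\mathcal{O}_X}\bigl(F^e_*\mathcal{O}_X((p^e-1)E),\, \omega_X(E+D)\bigr) \;\cong\; F^e_*\bigl(\omega_X(p^e(E+D) - (p^e-1)E)\bigr).$$
Since $p^e(E+D) - (p^e-1)E = E + p^eD$, the right-hand side is exactly $F^e_*(\omega_X(E+p^eD))$. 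Dualizing the displayed composite therefore produces the desired morphism $\Tr_{X,E}^e(D)\colon F^e_*(\omega_X(E+p^eD)) \to \omega_X(E+D)$.

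The main obstacle is invoking Grothendieck duality cleanly for the Frobenius: one must produce either the canonical isomorphism $F^{e!}\omega_X \cong \omega_X$ or, equivalently, the basic Frobenius trace $F^e_*\omega_X \to \omega_X$, both of which in the smooth setting are classical (constructed directly in local coordinates, or via the Cartier operator as discussed in Section~3) but not entirely formal. Once this is granted, incorporating the effective divisor $E$ is purely formal, relying only on the inclusion induced by $E \geq 0$ and the arithmetic identity above; naturality of the resulting trace in $E$ and $D$ then follows automatically from the construction.
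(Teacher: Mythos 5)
Your construction is correct and is essentially the paper's own argument: the paper instead twists the Frobenius $\mathcal O_X(-E)\to F^e_*(\mathcal O_X(-p^eE))\hookrightarrow F^e_*(\mathcal O_X(-E))$ by $\mathcal O_X(-D)$ and then applies $\mathcal Hom_{\mathcal O_X}(-,\omega_X)$, which by the projection formula is the same as your composite $\mathcal O_X\to F^e_*\mathcal O_X((p^e-1)E)$ dualized into $\omega_X(E+D)$. Both arguments use effectivity of $E$ for the same inclusion and rest on the same input, duality for the finite morphism $F^e$ together with $F^{e!}\omega_X\cong\omega_X$.
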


\begin{proof}
Consider the Frobenius map 
$$\mathcal O_X\to F^e_*\mathcal O_X,$$ 
that is, the $p^e$-th power map $a\mapsto a^{p^e}$. 
Since $E$ is effective, we obtain by tensoring with $\MO_X(-E)$
$$\mathcal O_X(-E)\to F^e_*(\mathcal O_X(-p^eE))\hookrightarrow F^e_*(\mathcal O_X(-E)).$$ 
Tensoring with $\mathcal O_X(-D)$, we obtain 
\begin{eqnarray*}
\mathcal O_X(-E-D)
&\to& F^e_*(\mathcal O_X(-E))\otimes \mathcal O_X(-D)\\
&\simeq& F^e_*(\mathcal O_X(-E-p^eD)).
\end{eqnarray*}
By the duality theorem for finite morphisms, 
we obtain 
$$\mathcal Hom_{\mathcal O_X}(F^e_*(\mathcal O_X(-E-p^eD)), \omega_X)\simeq F^e_*(\omega_X(E+p^eD)).$$ 
Then, we apply the functor 
$\mathcal Hom_{\mathcal O_X}(-, \omega_X)$ and we obtain 
$$F^e_*(\omega_X(E+p^eD))\to \omega_X(E+D).$$ 
This is the trace map $\Tr_{X,E}^e(D)$.  
\end{proof}

\begin{rem}\label{pE-vs-E}
By the above construction, 
$\Tr_{X,E}^e(D)$ factors through $\Tr_X^e(E+D):=\Tr_{X, 0}^e(E+D)$: 
{\small $$\Tr_{X,E}^e(D):F^e_*(\omega_X(E+p^eD))\hookrightarrow F^e_*(\omega_X(p^eE+p^eD))
\xrightarrow{\Tr_X^e(E+D)} \omega_X(E+D).$$}
\end{rem}

\begin{rem}\label{trace-local}
Let $X$ be a smooth projective variety. 
Let $\Spec R\subset X$ be an affine open subset such that 
$R$ has a $p$-basis $\{x_1, \cdots, x_n\}.$ 
Then, we obtain 
$$\Gamma(\Spec R, \omega_X)=Rdx_1\wedge\cdots\wedge dx_n$$
and 
\begin{eqnarray*}
\Gamma(\Spec R, F_*^e\omega_X)
&=&\bigoplus_{0\leq i_j<p^e}R^{p^e}x_1^{i_1}\cdots x_n^{i_n}dx_1\wedge\cdots\wedge dx_n.
\end{eqnarray*}
The trace map 
$$\Tr_X^e:\Gamma(\Spec R, F_*^e\omega_X)\to \Gamma(\Spec R, \omega_X)$$
is described as follows: 
\begin{enumerate}
\item{$\Tr_X^e(x_1^{p^e-1}\cdots x_n^{p^e-1}dx_1\wedge\cdots\wedge dx_n)=dx_1\wedge\cdots\wedge dx_n.$}
\item{$\Tr_X^e(x_1^{i_1}\cdots x_n^{i_n}dx_1\wedge\cdots\wedge dx_n)=0$ if $0\leq i_j<p^e-1$ for some 
$1\leq j\leq n$.}
\end{enumerate}
\end{rem}

The following two lemmas give some fundamental properties. 

\begin{lem}\label{D-linearequ}
Let $X$ be a smooth projective variety and 
let $E$ be an effective \Z-divisor. 
If $D_1$ and $D_2$ are linearly equivalent \Z-divisors, 
then the two trace maps 
$\Tr_X^e(D_1)$ and $\Tr_X^e(D_2)$ are the same 
for every positive integer $e$, that is, 
there exists a commutative diagram: 
$$\begin{CD}
F^e_*(\omega_X(E+p^eD_2))@>\Tr_X^e(D_2)>> \omega_X(E+D_2)\\
@VV\simeq V @VV\simeq V\\
F^e_*(\omega_X(E+p^eD_1))@>\Tr_X^e(D_1)>> \omega_X(E+D_1).
\end{CD}$$ 
\end{lem}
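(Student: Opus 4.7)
The plan is to deduce the commutativity from the naturality of the Frobenius morphism $\mathcal{O}_X\to F^e_*\mathcal{O}_X$, by tracking the construction of Proposition~\ref{tracedef} against a trivialization of the linear equivalence $D_1\sim D_2$.

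Fix $f\in K(X)^\times$ with $\mathrm{div}(f)=D_2-D_1$. Multiplication by $f$ gives an $\mathcal{O}_X$-module isomorphism $\mathcal{O}_X(-E-D_1)\to \mathcal{O}_X(-E-D_2)$, and multiplication by $f^{p^e}$ gives an isomorphism $\mathcal{O}_X(-E-p^eD_1)\to \mathcal{O}_X(-E-p^eD_2)$, since $\mathrm{div}(f^{p^e})=p^e(D_2-D_1)$. The heart of the argument is the commutativity of the ``pre-dual'' square
$$\begin{CD}
\mathcal{O}_X(-E-D_1) @>>> F^e_*\mathcal{O}_X(-E-p^eD_1) \\
@VV{\cdot f}V @VV{F^e_*(\cdot f^{p^e})}V \\
\mathcal{O}_X(-E-D_2) @>>> F^e_*\mathcal{O}_X(-E-p^eD_2)
\end{CD}$$
whose horizontal arrows are the composites constructed in Proposition~\ref{tracedef} out of the Frobenius and the inclusion induced by $E\geq 0$. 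On a local section these horizontal arrows are simply $s\mapsto s^{p^e}$, so the two routes around the square send $s$ to $s^{p^e}f^{p^e}$ and $(sf)^{p^e}$, respectively, which agree in $F^e_*\mathcal{O}_X(-E-p^eD_2)$.

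Applying $\mathcal{H}om_{\mathcal{O}_X}(-,\omega_X)$ and invoking Grothendieck duality for the finite flat morphism $F^e$, exactly as in the proof of Proposition~\ref{tracedef}, converts the pre-dual square into the desired commutative diagram of trace maps, with vertical isomorphisms induced by multiplication by $f^{p^e}$ on the left column and by $f$ on the right column. The only real bookkeeping obstacle is to verify that the projection-formula identification $F^e_*\mathcal{O}_X(-E)\otimes \mathcal{O}_X(-D_i)\simeq F^e_*\mathcal{O}_X(-E-p^eD_i)$ intertwines multiplication by $f$ on the outer tensor factor with $F^e_*$ of multiplication by $f^{p^e}$ on the target; this is built into the definition of the projection formula for a finite flat morphism, so no genuinely new ingredient is needed.
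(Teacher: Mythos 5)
Your argument is correct and is essentially the same as the paper's: the paper also fixes the rational function $g$ realizing $\mathcal O_X(-D_1)\simeq\mathcal O_X(-D_2)$, observes that the Frobenius map intertwines multiplication by $g$ with multiplication by $g^{p^e}$ (the commutative square you call the ``pre-dual'' square), and then applies $\mathcal Hom_{\mathcal O_X}(-,\omega_X)$. Your additional remark about compatibility with the projection-formula identification is a correct, slightly more careful accounting of a step the paper leaves implicit.
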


\begin{proof}
The assertion follows from $\Tr_X^e(D_i) =\Tr_X^e\otimes_{\MO_X} \MO_X(D_i)$. 
\end{proof}

\begin{lem}\label{e-vs-e+1}
Let $X$ be a smooth projective variety and 
let $E$ be an effective \Z-divisor. 
Let $D$ be a \Z-divisor. 
Then, for every positive integer $e$, 
$$\Tr_{X, E}^{e+1}(D)=\Tr_{X, E}^e(D)\circ F_*^e(\Tr_{X, E}^1(p^e D)),$$
that is,  
\begin{eqnarray*}
\Tr_{X, E}^{e+1}(D):
F^{e+1}_*(\omega_X(E+p^{e+1}D))&\xrightarrow{F_*^e(\Tr_{X,E}^1(p^eD))}&F^e_*(\omega_X(E+p^eD))\\
&\xrightarrow{\Tr_{X,E}^{e}(D)}&\omega_X(E+D).
\end{eqnarray*}
\end{lem}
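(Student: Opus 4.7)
My plan is to trace through the construction of the trace map given in the proof of Proposition~\ref{tracedef} and observe that its building blocks already factor in the desired way before we dualize. The key observation is that the $(p^{e+1})$-th power Frobenius map factors as
$$\mathcal O_X\xrightarrow{a\mapsto a^{p^e}}F^e_*\mathcal O_X\xrightarrow{F^e_*(b\mapsto b^p)}F^e_*F_*\mathcal O_X=F^{e+1}_*\mathcal O_X,$$
simply because $a^{p^{e+1}}=(a^{p^e})^p$. This factorization remains compatible after twisting by $\mathcal O_X(-E)$ and by $\mathcal O_X(-D)$ exactly as in Proposition~\ref{tracedef}, using effectivity of $E$ (so that $\mathcal O_X(-p^rE)\hookrightarrow\mathcal O_X(-E)$ for every $r\ge 1$) and the projection formula.

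I would then assemble these ingredients into a commutative factorization of the source map used to define $\Tr_{X,E}^{e+1}(D)$:
$$\mathcal O_X(-E-D)\to F^e_*(\mathcal O_X(-E-p^eD))\xrightarrow{F^e_*(\cdot)}F^{e+1}_*(\mathcal O_X(-E-p^{e+1}D)),$$
which locally reduces to the tautology $s\mapsto s^{p^e}\mapsto(s^{p^e})^p=s^{p^{e+1}}$. In this factorization, the first arrow is precisely the source map used to construct $\Tr_{X,E}^e(D)$, and the second arrow is $F^e_*$ applied to the source map used to construct $\Tr_{X,E}^1(p^eD)$. Applying the contravariant functor $\mathcal Hom_{\mathcal O_X}(-,\omega_X)$ and invoking the duality theorem for the finite morphisms $F$, $F^e$ and $F^{e+1}$, each arrow turns into the corresponding trace map with the direction of composition reversed, which yields exactly the stated identity.

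The only genuine technical point to verify is the compatibility of the duality isomorphism for finite morphisms with composition, namely that the canonical identification $\mathcal Hom_{\mathcal O_X}(F^{e+1}_*\mathcal O_X,\omega_X)\simeq F^{e+1}_*\omega_X$ coincides with the one obtained by applying $F^e_*$ to the analogous identification for $F$ and then invoking duality for $F^e$. This is a standard functoriality property of duality for finite morphisms, but it is the main conceptual input; once it is in place, the identity of trace maps follows from a routine diagram chase based on the factorization above.
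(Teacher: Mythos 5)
Your proposal is correct and follows essentially the same route as the paper: factor the $(e+1)$-st Frobenius through the $e$-th, twist by $\mathcal O_X(-E)$ and $\mathcal O_X(-D)$, and apply $\mathcal Hom_{\mathcal O_X}(-,\omega_X)$ together with duality for finite morphisms. The paper's proof is exactly this (stated more tersely), and your extra remark on the compatibility of the duality isomorphisms with composition is a reasonable point the paper leaves implicit.
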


\begin{proof}
Consider the Frobenius maps 
$$\mathcal O_X(-E)\to F^e_*(\mathcal O_X(-E))\to F^{e+1}_*(\mathcal O_X(-E)).$$ 
Tensoring with $\mathcal O_X(-D)$, we obtain 
\begin{eqnarray*}
\mathcal O_X(-E-D)
\to F^e_*(\mathcal O_X(-E-p^eD))\to 
F^{e+1}_*(\mathcal O_X(-E-p^{e+1}D)).
\end{eqnarray*}
Applying the functor 
$\mathcal Hom_X(-, \omega_X)$, we obtain the assertion. 
\end{proof}

In this paper, 
we often use the following two commutative diagrams. 

\begin{lem}\label{trace-diagram}
Let $X$ be a smooth projective variety and 
let $S$ be a smooth prime divisor. 
Then, there exist the following commutative diagrams: 
\begin{enumerate} 
\item{$$\begin{CD}
0@>>> F_*^e\omega_X@>>> F_*^e(\omega_X(S))@>>> F_*^e\omega_S@>>> 0\\
@. @VV\Tr_X^e V @VV\Tr_{X,S}^e V @VV\Tr_S^e V\\
0@>>> \omega_X@>>> \omega_X(S)@>>> \omega_S@>>> 0, 
\end{CD}$$}
\item{$$\begin{CD}
0@>>> F_*^e\omega_X@>>> F_*^e(\omega_X(p^eS))@>>> F_*^e(\omega_{p^eS})@>>> 0\\
@. @VV\Tr_X^e V @VV\Tr_{X}^e(S) V @VV\Psi V\\
0@>>> \omega_X@>>> \omega_X(S)@>>> \omega_S@>>> 0.
\end{CD}$$
Moreover, $\Tr_S^e$ factors through $\Psi$$:$ 
$$\Tr_S^e:F_*^e\omega_S\to F_*^e(\omega_{p^eS})\overset{\Psi}\to \omega_S.$$
}
\end{enumerate} 
\end{lem}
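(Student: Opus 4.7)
My plan is to derive both diagrams as $R\mathcal{H}om_{\mathcal{O}_X}(-,\omega_X)$-duals of morphisms of short exact sequences on $X$ linked by Frobenius. The key inputs are Grothendieck duality for the finite morphism $F^e$ — identifying $\mathcal{H}om_{\mathcal{O}_X}(F^e_*\mathcal{O}_X(-D),\omega_X)$ with $F^e_*\omega_X(D)$, exactly as already used in the proof of Proposition~\ref{tracedef} — together with the standard identities $\mathcal{H}om_{\mathcal{O}_X}(\mathcal{O}_T,\omega_X)=0$ and $\mathcal{E}xt^1_{\mathcal{O}_X}(\mathcal{O}_T,\omega_X)\simeq\omega_T$ for any effective Cartier divisor $T$.

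For (1), I would start from the commutative diagram
$$\begin{CD}
0 @>>> \mathcal{O}_X(-S) @>>> \mathcal{O}_X @>>> \mathcal{O}_S @>>> 0\\
@. @VVV @VVV @VVV\\
0 @>>> F^e_*\mathcal{O}_X(-S) @>>> F^e_*\mathcal{O}_X @>>> F^e_*\mathcal{O}_S @>>> 0
\end{CD}$$
in which the middle vertical is Frobenius, the left vertical is the composite $\mathcal{O}_X(-S)\to F^e_*\mathcal{O}_X(-p^eS)\hookrightarrow F^e_*\mathcal{O}_X(-S)$ appearing in Proposition~\ref{tracedef}, and the right vertical is induced on the quotient $\mathcal{O}_S$. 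Applying $R\mathcal{H}om_{\mathcal{O}_X}(-,\omega_X)$ and combining with the two identities above dualizes each row to the adjunction sequence $0\to\omega_X\to\omega_X(S)\to\omega_S\to 0$ (respectively its $F^e_*$), and turns the three verticals into $\Tr_X^e$, $\Tr_{X,S}^e$ and $\Tr_S^e$ by the very definitions in Proposition~\ref{tracedef}. For (2), I repeat the recipe with the modified second row $0\to F^e_*\mathcal{O}_X(-p^eS)\to F^e_*\mathcal{O}_X\to F^e_*\mathcal{O}_{p^eS}\to 0$, linked to the adjunction sequence by the projection-formula form $\mathcal{O}_X(-S)\to F^e_*\mathcal{O}_X(-p^eS)$ of Frobenius; the left two dualized verticals come out as $\Tr_X^e$ and $\Tr_X^e(S)$, and I \emph{define} $\Psi$ to be the third.

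The subtler point is the factorization of $\Tr_S^e$ through $\Psi$. For this I would observe that the Frobenius $\mathcal{O}_S\to F^e_*\mathcal{O}_S$ itself factors as $\mathcal{O}_S\to F^e_*\mathcal{O}_{p^eS}\to F^e_*\mathcal{O}_S$, where the first arrow is the right vertical used for (2) and the second is $F^e_*$ of the canonical surjection $\mathcal{O}_{p^eS}\twoheadrightarrow\mathcal{O}_S$; this is immediate locally, since both composites send $\bar f$ to $\overline{f^{p^e}}$. Dualizing and passing to $\mathcal{E}xt^1_X$ reverses the factorization into $F^e_*\omega_S\to F^e_*\omega_{p^eS}\xrightarrow{\Psi}\omega_S$, whose composition is then $\Tr_S^e$. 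The main bookkeeping hurdle throughout is confirming that the duality identifications really produce the trace maps of Proposition~\ref{tracedef} and not merely some natural maps with the same source and target; this is routine from the projection formula $F^e_*\mathcal{G}\otimes\mathcal{L}\simeq F^e_*(\mathcal{G}\otimes F^{e*}\mathcal{L})$ and the local description of $\Tr_X^e$ in Remark~\ref{trace-local}.
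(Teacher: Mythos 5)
Your proposal is correct and follows essentially the same route as the paper: the author likewise builds the two commutative squares of short exact sequences of structure sheaves linked by Frobenius (with $\mathcal O_X(-S)\to F^e_*\mathcal O_X(-S)$, resp.\ $\mathcal O_X(-S)\to F^e_*\mathcal O_X(-p^eS)$, on the left), applies $\mathcal Hom_{\mathcal O_X}(-,\omega_X)$, and obtains the factorization of $\Tr_S^e$ through $\Psi$ from the factorization of $F_S^e:\mathcal O_S\to F^e_*\mathcal O_S$ through $F^e_*\mathcal O_{p^eS}$. Your write-up is merely more explicit about the duality bookkeeping ($\mathcal Ext^1_{\mathcal O_X}(\mathcal O_T,\omega_X)\simeq\omega_T$ and duality for the finite morphism $F^e$), which the paper leaves implicit.
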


\begin{proof}
(1) Consider the following commutative diagram: 
$$\begin{CD}
0@>>> F_*^e(\mathcal O_X(-S))@>>> F_*^e\mathcal O_X@>>> F_*^e\mathcal O_S@>>> 0\\
@. @AAA @AA F_X^e A @AAF_S^e A\\
0@>>> \mathcal O_X(-S)@>>> \mathcal O_X@>>> \mathcal O_S@>>> 0.
\end{CD}$$
Apply the functor 
$\mathcal Hom_X(-, \omega_X)$ and we obtain the assertion. \\
(2) 
 Consider the following commutative diagram: 
$$\begin{CD}
0@>>> F_*^e(\mathcal O_X(-p^eS))@>>> F_*^e\mathcal O_X@>>> F_*^e\mathcal O_{p^eS}@>>> 0\\
@. @AAA @AA F_X^e A @AAA\\
0@>>> \mathcal O_X(-S)@>>> \mathcal O_X@>>> \mathcal O_S@>>> 0.
\end{CD}$$
Apply the functor 
$\mathcal Hom_X(-, \omega_X)$ and we obtain the required commutative diagram. 
Since 
$$F_S^e:\mathcal O_S\to F_*^e\mathcal O_S$$
factors through $F_*^e\mathcal O_{p^eS}$, 
we obtain the last assertion in the lemma. 
\end{proof}

\begin{rem}\label{Tango-cex}
Given a smooth projective variety $X$ and an ample \Z-divisor $A$ on $X$, 
it is natural to ask whether for every $e\in\mathbb Z_{>0}$, 
the trace map $\Tr_X^e(A)$ is surjective. 
However, this has a negative answer. 
Indeed, \cite{Tango} constructs a smooth projective curve $X$ and 
an ample \Z-divisor $A$ on $X$ such that the trace map $\Tr_X^e(A)$ is not surjective for $e=1$. 
\end{rem}

On the other hand, 
we obtain an affirmative answer for the following two cases: 
abelian varieties and $F$-split varieties.

\begin{prop}
If $X$ is an abelian variety and $A$ is an ample \Z-divisor, then the trace map 
$$\Tr_{X}^e(A):H^0(X, \omega_X(p^eA))\to H^0(X, \omega_X(A)).$$
is surjective for every $e\in\mathbb Z_{>0}$. 
\end{prop}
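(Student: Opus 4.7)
Since $X$ is an abelian variety one has $\omega_X \simeq \mathcal{O}_X$, so the map in question is
$$\Tr_X^e(A)\colon H^0(X,\mathcal{O}_X(p^eA)) \to H^0(X,\mathcal{O}_X(A)).$$
I would first form the short exact sequence
$$0 \to K \to F^e_*\mathcal{O}_X(p^eA) \xrightarrow{\Tr_X^e(A)} \mathcal{O}_X(A) \to 0,$$
whose right-exactness is immediate from the local description in Remark~\ref{trace-local}. Since $A$ is ample on an abelian variety, Mumford's vanishing theorem (valid in any characteristic) yields $H^i(X,\mathcal{O}_X(p^eA))=0$ for all $i>0$; as $F^e$ is affine, the same holds for $F^e_*\mathcal{O}_X(p^eA)$. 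The long exact sequence in cohomology then reduces surjectivity of the trace on $H^0$ to the vanishing $H^1(X,K)=0$.

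The next step is to dualize. Applying $\mathcal{H}om(-,\mathcal{O}_X)$ to $0 \to \ker(\Tr_X^e) \to F^e_*\mathcal{O}_X \to \mathcal{O}_X \to 0$ and using Grothendieck duality for the finite morphism $F^e$ (together with $\omega_X\simeq\mathcal{O}_X$, which gives $\mathcal{H}om(F^e_*\mathcal{O}_X,\mathcal{O}_X) \simeq F^e_*\mathcal{O}_X$) identifies the Serre dual of $\ker(\Tr_X^e)$ with the Frobenius cokernel $B^e := F^e_*\mathcal{O}_X/\mathcal{O}_X$. By Serre duality,
$$H^1(X,K)^\vee \simeq H^{n-1}(X, B^e \otimes \mathcal{O}_X(-A)),\quad n=\dim X.$$
Tensoring the sequence $0\to\mathcal{O}_X\to F^e_*\mathcal{O}_X\to B^e\to 0$ with $\mathcal{O}_X(-A)$ (using the projection formula) and combining with Mumford vanishing in the form $H^i(X,\mathcal{O}_X(-A))=0$ for $i<n$ (via Serre duality), the long exact sequence collapses and identifies $H^{n-1}(X, B^e \otimes \mathcal{O}_X(-A))$ with the kernel of the Frobenius pullback
$$F^{e,*}\colon H^n(X,\mathcal{O}_X(-A)) \to H^n(X,\mathcal{O}_X(-p^eA)).$$
The problem thus reduces to showing that this Frobenius on top cohomology is injective.

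The main obstacle is precisely this injectivity, which under the Serre duality identifications is the formal transpose of the statement we seek, so a new input specific to abelian varieties is required. My plan is to exploit the group structure of $X$: for every integer $N$ coprime to $p$, the multiplication isogeny $[N]\colon X \to X$ is étale with split trace and commutes with the absolute Frobenius (as $F^*$ and $[N]^*$ are both ring homomorphisms, $F^*[N]^* = [N]^*F^*$). Using naturality of the trace map under étale base change (compatible with the Cartier-operator viewpoint reviewed in Section~3) and averaging over $[N]$-torsion for $N$ sufficiently divisible, one can, given any nonzero class $\alpha\in H^n(X,\mathcal{O}_X(-A))$, produce a witness showing $F^{e,*}\alpha\neq 0$. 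The abelian-variety hypothesis is essential here, since Tango's counterexample rules out the analogous statement on a general smooth projective variety.
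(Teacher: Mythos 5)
Your reductions are correct as far as they go, but the proof has a genuine gap at exactly the point where you say a new input specific to abelian varieties is required. Passing to the kernel $K$ of the trace map, applying Serre duality, and arriving at the injectivity of $F^{e,*}\colon H^n(X,\mathcal O_X(-A))\to H^n(X,\mathcal O_X(-p^eA))$ is, as you yourself note, just the formal transpose of the original statement, so no progress has been made at that stage. The only substantive idea you then offer is to ``average over $[N]$-torsion for $N$ sufficiently divisible'' to produce a witness that $F^{e,*}\alpha\neq 0$, and this is not yet an argument: since $[N]^*$ commutes with $F^{e,*}$ and is split injective on cohomology when $p\nmid N$, pulling back along $[N]$ merely reduces the claim for $A$ to the identical claim for $[N]^*A$, and without further input that reduction is circular.

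The missing input, which is the heart of the paper's proof, is quantitative: by Mumford's formula $n_X^*A=\frac{n^2+n}{2}A+\frac{n^2-n}{2}(-1)_X^*A$, the pulled-back divisor is an arbitrarily large multiple of a fixed pair of ample divisors as $n\to\infty$. The paper uses the splitting of $\mathcal O_X\to (n_X)_*\mathcal O_X$ (a finite map of degree prime to $p$) to reduce surjectivity of $\Tr_X^e(A)$ to that of $\Tr_X^e(n_X^*A)$, and then kills the obstruction --- the first cohomology of the kernel sheaf $B_X^n$ twisted by $n_X^*A$, which by Lemma~\ref{e-vs-e+1} is the only sheaf one ever needs, independently of $e$ --- by the Fujita vanishing theorem once $n\gg 0$. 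Without Mumford's formula, or some equivalent statement that $[N]^*A$ becomes arbitrarily positive, there is no reason the problem gets easier after pullback, and your ``witness'' is never produced. If you insert (i) the split injectivity of $[N]^*$ on cohomology, (ii) Mumford's formula, and (iii) Fujita (or Serre) vanishing for $B_X^n$ twisted by $n_X^*A$, your outline becomes essentially the paper's proof.
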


\begin{proof}
Fix $e\in\mathbb Z_{>0}$. 
For $m\in\mathbb Z$, let $m_X:X \to X$ be the $m$-multiplication map of the abelian variety $X$. 
If $n\in\mathbb Z_{>0}$ is not divisible by $p$, then 
$$n_X:X\to X$$
is a finite morphism whose degree is not divisible by $p$. 
Thus, 
$$\mathcal O_X\to (n_X)_*\mathcal O_X$$ 
is split as an $\mathcal O_X$-module homomorphism (cf. \cite[Proposition~5.7(2)]{KM}). 
We obtain the following commutative diagram 
$$\begin{CD}
F_*^e(n_X)_*(\mathcal O_X(p^e(n_X)^*A))@<<< (n_X)_*(\mathcal O_X((n_X)^*A))\\
@AAA @AA \widetilde{n_X} A\\
F_*^e(\mathcal O_X(p^eA))@<<<\mathcal O_X(A)
\end{CD}$$
Applying the functor $\mathcal Hom_{\mathcal O_X}(-, \omega_X)$ 
(cf. the proof of Proposition~\ref{tracedef}) and taking global sections gives: 
$$\begin{CD}
H^0(X, \omega_X(p^e(n_X)^*A))@>\Tr_{X}^e((n_X)^*A)>> H^0(X, \omega_X((n_X)^*A))\\
@VVV @VV \widetilde{n_X}' V\\
H^0(X, \omega_X(p^eA))@>\Tr_{X}^e(A)>> H^0(X, \omega_X(A)).
\end{CD}$$
Here, $\widetilde{n_X}'$ is surjective by the splitting of $\mathcal O_X\to (n_X)_*\mathcal O_X$. 
Therefore, it is sufficient to show that $\Tr_{X}^e((n_X)^*A)$ is surjective. 
By \cite[Corollary~3 in Section~6]{Mumford}, we obtain 
$$n_X^*A= \frac{n^2+n}{2}A+\frac{n^2-n}{2}(-1)_X^*A.$$
Note that, since $(-1)_X$ is an automorphism, 
$(-1)_X^*A$ is ample. 
Therefore, by the Fujita vanishing theorem (\cite[Theorem~(1)]{Fujita1}, 
\cite[Section~5]{Fujita2}), 
we can find $n\in\mathbb Z_{>0}$ such that 
$\Tr_{X}^e((n_X)^*A)$ is surjective. 
\end{proof}

\begin{dfn}
Let $X$ be a smooth projective variety. 
We say $X$ is {\em F-split} if 
the Frobenius map 
$$\mathcal O_X\to F_*\mathcal O_X$$
is split as an $\mathcal O_X$-module homomorphism. 
\end{dfn}

\begin{prop}\label{F-surje}
Let $X$ be an $F$-split smooth projective variety and 
let $D$ be a \Z-divisor. 
Then, the trace map 
$$\Tr_{X}^e(D):H^0(X, \omega_X(p^eD))\to H^0(X, \omega_X(D)).$$
is surjective for every $e\in\mathbb Z_{>0}$. 
\end{prop}

\begin{proof}
By the definition of $F$-splitting, we see that 
the Frobenius map 
$$\mathcal O_X(-D)\to F_*^e(\mathcal O_X(-p^eD))$$
is split. 
Applying the functor $\mathcal Hom_{\mathcal O_X}(-, \omega_X)$, we obtain the assertion. 
\end{proof}

\section{Facts on Cartier operator}

In this section, we collect some facts on the Cartier operator. 
By Remark~\ref{cartier-trace}, 
we consider the trace map of Frobenius as the Cartier operator.

\begin{dfn}\label{de-rham}
Let $X$ be a smooth variety. 
Consider the de Rham complex of $X$  
$$\Omega_X^{\bullet}:\mathcal O_X\overset{d_0}\to\Omega^1_X\overset{d_1}\to\Omega^2_X
\overset{d_2}\to\cdots$$
where $\Omega^i_X:=\Omega^i_{X/k}.$
Apply $F_*$ and we obtain a complex 
$$F_*\Omega_X^{\bullet}:F_*\mathcal O_X\overset{F_*d_0}\to F_*\Omega^1_X\overset{F_*d_1}\to
F_*\Omega^2_X\overset{F_*d_2}\to\cdots.$$
Then, it is easy to see that 
$F_*d_i$ is an $\mathcal O_X$-module homomorphism. 
We define 
\begin{eqnarray*}
B_X^i&:=&\Image(F_*d_{i-1}:F_*\Omega_X^{i-1}\to F_*\Omega_X^{i})\\
Z_X^i&:=&\Ker(F_*d_i:F_*\Omega_X^i\to F_*\Omega_X^{i+1}).
\end{eqnarray*}
Note that $B_X^i$ and $Z_X^i$ are coherent sheaves. 
\end{dfn}

\begin{fact}\label{cartier-def}
Let $X$ be a smooth variety. 
For every $i\in \mathbb Z$ such that $1\leq i\leq \dim X$, 
consider the map 
\begin{eqnarray*}
C_X^{-1}:\Omega_X^i&\to& Z_X^i/B_X^i
\end{eqnarray*}
locally defined by 
\begin{eqnarray*}
C_X^{-1}|_{\Spec R}:\Gamma(\Spec R, \Omega_X^i)&\to& \Gamma(\Spec R, Z_X^i/B_X^i)\\
da_1\wedge\cdots \wedge da_i&\mapsto&
a_1^{p-1}\cdots a_i^{p-1}da_1\wedge\cdots\wedge da_i
\end{eqnarray*}
where $\Spec R$ is an open affine subset of $X$ and $a_1,\cdots, a_i\in R.$ 
This map $C_X^{-1}$ is a well-defined $\mathcal O_X$-module isomorphism. 
We call $C_X:=(C_X^{-1})^{-1}$ the Cartier operator.  
\end{fact}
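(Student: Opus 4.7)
The plan is to work locally on an affine open $\Spec R\subset X$ admitting a $p$-basis $\{x_1,\dots,x_n\}$ over $R^p$, verify both the existence of $C_X^{-1}$ and that it is an isomorphism in these coordinates, and finally observe that the intrinsic formula in the statement makes the construction independent of the chosen $p$-basis. First I would unwind $F_*\Omega^i_X$, $B^i_X$, and $Z^i_X$ in such coordinates: every element of $\Gamma(\Spec R,F_*\Omega^i_X)$ has a unique expansion $\sum b^p\, x_1^{e_1}\cdots x_n^{e_n}\, dx_{j_1}\wedge\dots\wedge dx_{j_i}$ with $0\le e_k<p$ and $b\in R$, and a direct inspection of $F_*d$ shows that each class in $Z^i/B^i$ has a unique representative in which $e_k=p-1$ for $k\in\{j_1,\dots,j_i\}$ and $e_k=0$ otherwise. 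Consequently $Z^i/B^i$ is free over $R$ (via the twisted $p$-th power $\mathcal O_X$-action on $F_*\Omega^i_X$) on the classes of $x_{j_1}^{p-1}\cdots x_{j_i}^{p-1}\, dx_{j_1}\wedge\dots\wedge dx_{j_i}$.

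Next I would verify that the prescription $da_1\wedge\dots\wedge da_i\mapsto[a_1^{p-1}\cdots a_i^{p-1}\, da_1\wedge\dots\wedge da_i]$ is compatible with the defining relations of $\Omega^i_R$. Alternation is immediate from the antisymmetry of the wedge. Leibniz holds on the nose via the identity $(bc)^{p-1}d(bc)=b^p c^{p-1}dc+c^p b^{p-1}db$, which after expanding $d(bc)=bdc+cdb$ matches the twisted $\mathcal O_X$-action on the target. The genuinely substantive relation to check is additivity,
$$(a+b)^{p-1}d(a+b)-a^{p-1}da-b^{p-1}db\in B^1.$$
I would lift $a,b$ to a flat $\mathbb Z_{(p)}$-model of $R$, use $(\tilde a+\tilde b)^p=\tilde a^p+\tilde b^p+p\sum_{k=1}^{p-1}\nu_k\,\tilde a^k\tilde b^{p-k}$ with $\nu_k:=\frac{1}{p}\binom{p}{k}\in\mathbb Z$, differentiate, divide by $p$, and reduce modulo $p$: the cross terms become $\sum\bar\nu_k\, d(a^k b^{p-k})$, which manifestly lie in $B^1$. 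Given well-definedness, $\mathcal O_X$-linearity follows from $f\,da_1=d(fa_1)-a_1\,df$ and the Leibniz identity above, yielding $C_X^{-1}(f\cdot da_1\wedge\dots\wedge da_i)=f^p\cdot C_X^{-1}(da_1\wedge\dots\wedge da_i)$, which matches the twisted action on $F_*\Omega^i_X$.

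Finally, evaluating $C_X^{-1}$ on the canonical basis $dx_{j_1}\wedge\dots\wedge dx_{j_i}$ of $\Omega^i_R$ returns exactly the distinguished basis of $Z^i_R/B^i_R$ from the first step, so $C_X^{-1}$ is a local isomorphism; since its defining formula is intrinsic, the local isomorphisms glue to the claimed global $\mathcal O_X$-module isomorphism. The main obstacle is the additivity check: unlike alternation and Leibniz it is not a formal identity but genuinely exploits the quotient by $B^1$, and controlling the binomial cross terms requires either the $\mathbb Z_{(p)}$-lift argument sketched above or a direct combinatorial manipulation of the integers $\frac{1}{p}\binom{p}{k}$.
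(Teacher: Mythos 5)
Your proposal is correct. Note that the paper does not actually prove this statement: it is quoted as a known fact with a pointer to Esnault--Viehweg, Theorem~9.14, so there is no internal proof to compare against. Your argument is precisely the standard one found in that reference: compute $F_*\Omega_X^\bullet$ in local \'etale coordinates (a $p$-basis), identify $Z_X^i/B_X^i$ as the free module on the classes $[x_{j_1}^{p-1}\cdots x_{j_i}^{p-1}\,dx_{j_1}\wedge\cdots\wedge dx_{j_i}]$ via the one-variable computation and K\"unneth, define $C_X^{-1}$ on $\Omega^1_X$ through the universal property of derivations for the Frobenius-twisted module structure, extend by wedge products, and match bases. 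You correctly isolate additivity as the only nonformal relation; the identity $(a+b)^{p-1}d(a+b)-a^{p-1}da-b^{p-1}db=\sum_{k=1}^{p-1}\overline{\nu}_k\,d(a^kb^{p-k})$ with $\nu_k=\frac{1}{p}\binom{p}{k}$ is exactly the classical trick. One small simplification: you do not need a flat $\mathbb Z_{(p)}$-lift of $R$ itself; the identity already holds in $\Omega^1_{\mathbb Z[A,B]}$ after dividing the exact polynomial relation by $p$, and then specializes to $R$ along $A\mapsto a$, $B\mapsto b$, so the universal polynomial ring serves as the only lift required.
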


\begin{proof}
See, for example, \cite[Theorem~9.14]{EV}.
\end{proof}

\begin{rem}\label{cartier-remark}
Let $X$ be an $n$-dimensional smooth variety. 
We obtain the following exact sequences: 
\begin{enumerate}
\item{$0\to \mathcal O_X\to F_*\mathcal O_X\to B_X^1\to 0$,}
\item{$0\to Z_X^i\to F_*\Omega_X^i\to B_X^{i+1}\to 0$ for $1\leq i\leq n$, and}
\item{$0\to B_X^i\to Z_X^i\overset{C_X^i}\to \Omega_X^{i}\to 0$ for $1\leq i \leq n$.}
\end{enumerate}
By (2) for $i=n$, we obtain $Z_X^n\simeq F_*\omega_X$. 
By (3) for $i=n$, we obtain 
$$0\to B_X^n\to F_*\omega_X\overset{C_X^n}\to \omega_X\to 0.$$
\end{rem}

\begin{rem}\label{cartier-trace}
Let $X$ be an $n$-dimensional smooth projective variety. 
By Remark~\ref{trace-local}, the Cartier operator $C_X^n$ and the trace map of Frobenius are the same: 
$C_X^n=\Tr^1_X.$
\end{rem}

\begin{lem}\label{cartier-etale}
Let $\gamma:X\to Y$ be a finite \'etale morphism between smooth varieties. 
Then, for every $i$, 
$$\gamma^*B_Y^i\simeq B_{X}^i\,\,\,{\rm and}\,\,\,\gamma^*Z_Y^i\simeq Z_{X}^i.$$
\end{lem}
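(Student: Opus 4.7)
The plan is to deduce the lemma from the standard characterization of étale morphisms in positive characteristic: for $\gamma$ étale, the relative Frobenius $F_{X/Y}:X\to Y\times_{F_Y,Y}X$ is an isomorphism, or equivalently, the absolute Frobenius square
$$\begin{CD}
X @>F_X>> X \\
@V\gamma VV @VV\gamma V \\
Y @>F_Y>> Y
\end{CD}$$
is Cartesian. Since $\gamma$ is étale and in particular flat, flat base change applied to this Cartesian square produces, for every quasi-coherent $\mathcal O_Y$-module $\mathcal{F}$, a canonical natural isomorphism
$$\gamma^*(F_{Y*}\mathcal{F})\xrightarrow{\sim} F_{X*}(\gamma^*\mathcal{F}).$$

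Next I would combine this with the fact that $\gamma^*\Omega_Y^i\simeq \Omega_X^i$ (which holds because $\Omega_{X/Y}=0$) and the compatibility of the exterior derivative with smooth pullback, to obtain an isomorphism of complexes of $\mathcal O_X$-modules
$$\gamma^*(F_{Y*}\Omega_Y^{\bullet})\simeq F_{X*}\Omega_X^{\bullet}$$
under which $\gamma^*(F_{Y*}d_i^Y)$ corresponds to $F_{X*}d_i^X$. Both ingredients are standard: the first from flat base change, the second from functoriality of the de Rham complex.

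Finally, since $\gamma$ is flat, $\gamma^*$ is exact and hence commutes with both kernels and images of $\mathcal O_Y$-module homomorphisms. Applying this to the differentials above yields
$$\gamma^*B_Y^i=\gamma^*\Image(F_{Y*}d_{i-1}^Y)\simeq\Image(F_{X*}d_{i-1}^X)=B_X^i,$$
and analogously $\gamma^*Z_Y^i\simeq Z_X^i$, as desired.

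The only substantive step is the Cartesian property of the Frobenius square; this is where the étaleness hypothesis is essentially used, and is the main obstacle if one has not seen it before. Everything else is formal from flat base change and exactness of $\gamma^*$.
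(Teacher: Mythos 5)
Your proof is correct and is essentially the paper's argument in more conceptual clothing: the paper reduces to the affine case and explicitly constructs the isomorphisms $\theta^i:((F_A)_*\Omega_A^i)\otimes_A B\to (F_B)_*(\Omega_A^i\otimes_A B)$ (which is exactly your base-change isomorphism coming from the Cartesian Frobenius square) and $\rho^i:\Omega_A^i\otimes_A B\to\Omega_B^i$ (your $\gamma^*\Omega_Y^i\simeq\Omega_X^i$), then checks compatibility with $d$ by direct computation and concludes, as you do, from exactness of flat pullback. The only presentational difference is that you invoke the Cartesian property of the Frobenius square abstractly where the paper verifies the resulting isomorphism by hand on a $p$-basis.
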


\begin{proof}
We may assume $X=\Spec B$ and $Y=\Spec A$. 
Let 
$$\varphi: A\to B$$ 
be the homomorphism induced by $\gamma$. 
Let 
$$F_A:A\to A\,\,\,{\rm and}\,\,\,F_B:B\to B$$
be the respective $p$-th power maps. 
Since $\varphi$ is \'etale, the following diagram is a tensor product: 
$$\begin{CD}
A @>F_A>> A\\
@VV\varphi V @VV\varphi V\\
B @>F_B>> B.
\end{CD}$$
Thus, we see that the natural $B$-module homomorphism 
\begin{eqnarray*}
\theta^i:((F_A)_*\Omega_A^i)\otimes_A B&\to& (F_B)_*(\Omega_A^i\otimes_A B)\\
(\sum_J a_Jdx_J)\otimes_A b&\mapsto& (\sum_J a_Jdx_J)\otimes_A b^p
\end{eqnarray*}
is an isomorphism where $a_J\in A$ and $dx_J:=dx_{j_1}\wedge\cdots\wedge dx_{j_i}$ for some $x_{j_l}\in A$. 
Since $\varphi$ is \'etale, the natural $B$-module homomorphism 
\begin{eqnarray*}
\rho^i:\Omega_A^i\otimes_A B&\to&\Omega_B^i\\
(\sum_J a_Jdx_J)\otimes_A b&\mapsto& \sum \varphi(a_J)bd(\varphi(x_J))
\end{eqnarray*}
is an isomorphism. 
Since $\varphi$ is flat, the isomorphisms in the lemma follow from the commutativity 
of the following diagram: 
$$\begin{CD}
(F_B)_*\Omega_B^i@>d>> (F_B)_*\Omega_B^{i+1}\\
@AA(F_B)_*\rho^iA @AA(F_B)_*\rho^{i+1}A\\
(F_B)_*(\Omega_A^i\otimes_A B)@>d>> (F_B)_*(\Omega_A^{i+1}\otimes_A B)\\
@AA\theta^iA @AA\theta^{i+1}A\\
((F_A)_*\Omega_A^i)\otimes_A B@>d>> ((F_A)_*\Omega_A^{i+1})\otimes_A B,
\end{CD}$$
which is easy to check. 
\end{proof}

We state the following vanishing result on $F$-split varieties for later use. 

\begin{prop}\label{F-split-B-vanish}
If $X$ is an $n$-dimensional $F$-split smooth projective variety and 
$A$ is an ample \Z-divisor, then 
$$H^1(X, B_X^n(A))=0.$$
\end{prop}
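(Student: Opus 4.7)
The plan is to leverage the $F$-split hypothesis to promote the Cartier exact sequence involving $B_X^n$ to a \emph{split} exact sequence of $\mathcal O_X$-modules, and then iterate the resulting direct sum decomposition until Serre vanishing kills the ambient cohomology.

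First, I recall from Remark~\ref{cartier-remark}(3) with $i=n$, combined with the identification $Z_X^n\cong F_*\omega_X$ from Remark~\ref{cartier-remark}(2), the short exact sequence
$$0\to B_X^n\to F_*\omega_X\xrightarrow{C_X^n}\omega_X\to 0,$$
where $C_X^n=\Tr_X^1$ by Remark~\ref{cartier-trace}. Because $X$ is $F$-split, the Frobenius $\mathcal O_X\to F_*\mathcal O_X$ admits an $\mathcal O_X$-linear retraction; dualizing by $\mathcal Hom_{\mathcal O_X}(-,\omega_X)$ and using the finite duality identification $\mathcal Hom_{\mathcal O_X}(F_*\mathcal O_X,\omega_X)\cong F_*\omega_X$ recalled in the proof of Proposition~\ref{tracedef}, this retraction becomes a section of $\Tr_X^1$. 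Hence the sequence splits as $\mathcal O_X$-modules, yielding $F_*\omega_X\cong \omega_X\oplus B_X^n$.

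Next, for each $e\geq 1$ I tensor this decomposition with $\mathcal O_X(p^{e-1}A)$ and apply the projection formula on the left-hand side to obtain
$$F_*(\omega_X(p^eA))\cong \omega_X(p^{e-1}A)\oplus B_X^n(p^{e-1}A).$$
Taking $H^1$ and iterating this identity down from $p^e A$ to $pA$ gives
$$H^1(X,\omega_X(p^eA))\cong H^1(X,\omega_X(A))\oplus \bigoplus_{i=0}^{e-1}H^1(X,B_X^n(p^iA)).$$
Since $A$ is ample, Serre vanishing forces $H^1(X,\omega_X(p^eA))=0$ for $e\gg 0$, so every summand on the right vanishes; in particular the $i=0$ term, which is exactly $H^1(X,B_X^n(A))$, is zero.

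The only substantive point is the splitting assertion in the first step: it is a routine dualization of the $F$-split condition via the same finite duality used to define the trace map in Proposition~\ref{tracedef}. Once this is in hand, the remainder of the argument is formal bookkeeping with the projection formula and Serre vanishing.
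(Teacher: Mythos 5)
Your proof is correct, but it reaches the vanishing by a genuinely different route from the paper's. Both arguments start from the exact sequence $0\to B_X^n\to F_*\omega_X\xrightarrow{C_X^n}\omega_X\to 0$ and exploit the $F$-splitting, but the paper uses only the weaker consequence that the trace map is surjective on $H^0$ after twisting by $A$ (Proposition~\ref{F-split-surje}), which gives an injection $H^1(X,B_X^n(A))\hookrightarrow H^1(X,\omega_X(pA))$, and then quotes the Kodaira vanishing theorem for $F$-split varieties (\cite[Proposition~2]{MR}) to kill the target. You instead upgrade the $F$-splitting to a sheaf-level direct sum decomposition $F_*\omega_X\cong\omega_X\oplus B_X^n$ --- your dualization of the retraction of $\mathcal O_X\to F_*\mathcal O_X$ through $\mathcal Hom_{\mathcal O_X}(-,\omega_X)$ is a correct way to produce a section of $\Tr_X^1$ --- and then iterate this decomposition against the twists $p^iA$ until Serre vanishing applies. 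The bookkeeping with the projection formula and the affineness of Frobenius is all in order. The net effect is that your argument is self-contained: it avoids the external citation and in fact re-derives the relevant case of the Mehta--Ramanathan vanishing along the way (the summand $H^1(X,\omega_X(A))$ also dies), at the cost of a slightly longer induction; the paper's proof is shorter but leans on \cite{MR}.
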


\begin{proof}
Consider the exact sequence 
$$0\to B_X^n\to F_*\omega_X\overset{C_X^n}\to \omega_X\to 0.$$
Then, by Proposition~\ref{F-surje}, the trace map 
$$C_X^n=\Tr_{X}^1(A):H^0(X, \omega_X(pA))\to H^0(X, \omega_X(A))$$
is surjective. 
Therefore, we obtain the exact sequence 
$$0\to H^1(X, B_X^n(A))\to H^1(X, \omega_X(pA)).$$
Since $F$-split varieties satisfy the Kodaira vanishing theorem (\cite[Proposition~2]{MR}), 
we obtain the vanishing $H^1(X, B_X^n(A))=0$.
\end{proof}

\section{The trace map of Frobenius for curves}

In this section, we calculate the trace map 
$$\Tr_{X}^e(A):H^0(X, \omega_X(p^eA))\to H^0(X, \omega_X(A))$$ 
when $X$ is a curve. 
By Remark~\ref{Tango-cex}, 
$\Tr_X^e(A)$ is not surjective in general. 
However, we show that $\Tr_X^e(A)$ is almost always a nonzero map. 

\begin{thm}\label{curve-trace-nonzero}
Let $X$ be a smooth projective curve whose genus $g(X)$ is not zero. 
Let $A$ be an ample \Z-divisor. 
Then, for every $e\in\mathbb Z_{>0}$, the trace map 
$$\Tr_{X}^e(A):H^0(X, \omega_X(p^eA))\to H^0(X, \omega_X(A))$$
is a nonzero map. 
\end{thm}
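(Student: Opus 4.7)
My plan is to use Serre duality to rephrase the non-vanishing of $\Tr_X^e(A)$ as the non-vanishing of an induced Frobenius map on $H^1$, and then identify the kernel of the latter explicitly.

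Since $\Tr_X^e(A)$ is obtained in Proposition~\ref{tracedef} by applying $\mathcal{H}om(-,\omega_X)$ to the Frobenius map $\mathcal{O}_X(-A)\to F_*^e\mathcal{O}_X(-p^eA)$, Serre duality on the curve $X$ identifies the $k$-linear dual of
$$\Tr_X^e(A)\colon H^0(X,\omega_X(p^eA))\to H^0(X,\omega_X(A))$$
with the natural map
$$\phi_e\colon H^1(X,\mathcal{O}_X(-A))\to H^1(X,\mathcal{O}_X(-p^eA))$$
induced on $H^1$ by that same Frobenius. Hence it suffices to prove $\phi_e\neq 0$. To compute $\ker(\phi_e)$, I would tensor the short exact sequence $0\to \mathcal{O}_X\to F_*^e\mathcal{O}_X\to Q_e\to 0$ (where $Q_e:=F_*^e\mathcal{O}_X/\mathcal{O}_X$, so that $Q_1=B_X^1$ in the notation of Remark~\ref{cartier-remark}) with $\mathcal{O}_X(-A)$ and pass to the long exact sequence. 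Because $A$ is ample of positive degree and $g(X)\geq 1$, both $H^0(\mathcal{O}_X(-A))$ and $H^0(\mathcal{O}_X(-p^eA))$ vanish, so the connecting map gives an isomorphism $\ker(\phi_e)\cong H^0(X,Q_e(-A))$. By Riemann--Roch, $\dim H^1(X,\mathcal{O}_X(-A))=g-1+\deg A\geq 1$, so the theorem reduces to the strict inequality
$$h^0(X,Q_e(-A))<g-1+\deg A.$$

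To bound $h^0(X,Q_e(-A))$, I would use the filtration coming from the short exact sequences
$$0\to Q_{e-1}\to Q_e\to F_*^{e-1}B_X^1\to 0$$
(obtained by comparing the $(e-1)$-st and $e$-th iterated Frobenius); twisting by $\mathcal{O}_X(-A)$ and iterating yields
$$h^0(X,Q_e(-A))\leq\sum_{i=0}^{e-1}h^0(X,B_X^1(-p^iA)).$$
The sheaf $B_X^1$ is a rank-$(p-1)$ vector bundle on $X$ of slope $g-1$; for indices $i$ with $p^i\deg A>g-1$ the slope of $B_X^1(-p^iA)$ is negative, and semistability of $B_X^1$ (a theorem of X.~Sun) then forces $h^0(B_X^1(-p^iA))=0$, so only the low-index terms can contribute to the sum.

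The main obstacle I anticipate is the low-index regime $p^i\deg A\leq g-1$, equivalently the base case $e=1$ with $\deg A$ small compared to $g$, where the slope argument no longer kills the individual $h^0$'s. I would separate into cases by $F$-splitness of $X$: if $X$ is $F$-split, Proposition~\ref{F-split-surje} directly gives surjectivity of $\Tr_X^e(A)$ on $H^0$, so the claim is immediate; if $X$ is not $F$-split, I would use the explicit local description of the trace map given by Remark~\ref{trace-local} in a local $p$-basis, combined with the non-vanishing of a Hasse--Witt-type invariant detected beyond the $F$-split situation, to exhibit a rational differential representing a class in $H^0(\omega_X(p^eA))$ whose Cartier image in $H^0(\omega_X(A))$ is non-zero. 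Making this last exhibition precise across all non-$F$-split curves is the principal technical difficulty.
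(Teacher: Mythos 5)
Your dual formulation is correct as far as it goes: by Serre duality together with duality for the finite morphism $F^e$, the map $\Tr_X^e(A)$ is indeed dual to $\phi_e\colon H^1(X,\mathcal O_X(-A))\to H^1(X,\mathcal O_X(-p^eA))$; since $H^0(X,\mathcal O_X(-A))=H^0(X,\mathcal O_X(-p^eA))=0$, the identification $\ker(\phi_e)\simeq H^0(X,Q_e(-A))$ holds; and the filtration bound $h^0(Q_e(-A))\leq\sum_{i=0}^{e-1}h^0(B_X^1(-p^iA))$ is valid. Granting semistability of $B_X^1$ (a non-trivial external input), this proves the theorem --- in fact surjectivity of $\Tr_X^e(A)$ --- whenever $\deg A\geq g$. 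But that is exactly the easy range: the whole content of the statement, and the reason Tango's examples exist at all, lies in the range $1\leq\deg A\leq g-1$, and there your argument stops. The $F$-split dichotomy you propose does not close the gap: $F$-splitness only settles the untwisted Frobenius, and ``use a Hasse--Witt-type invariant to exhibit a rational differential whose Cartier image is non-zero'' is a restatement of the conclusion rather than a proof of it --- as you yourself acknowledge. So the proposal, as written, establishes the theorem only for $\deg A\geq g$ (perhaps a bit more with a Clifford-type bound on $h^0$ of the remaining semistable twists), not in general; in particular it does not treat $\deg A=1$, which is the case the paper spends two of its three steps on.

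For comparison, the paper never dualizes. For $\deg A\geq 2$ it restricts to a point $Q$: the diagram of Lemma~\ref{trace-diagram} for the divisor $Q$ together with the vanishing $H^1(X,K_X+p^e(A-Q))=0$ shows that $\rho\circ\Tr_X^e(A)$ surjects onto $H^0(Q,K_Q+A-Q)\neq 0$. For $\deg A=1$ it chooses $Q$ with $p^eA\not\sim p^eQ$ --- possible because $\Pic^0(X)$ has non-trivial prime-to-$p$ torsion when $g\geq 1$ --- takes $\eta\in H^0(X,\omega_X(p^eA-(p^e-1)Q))\setminus H^0(X,\omega_X(p^eA-p^eQ))$, and reads off from the local expansion $\eta=\sum_i f_i^{p^e}x^i\,dx$ that $f_{p^e-1}$ is a unit, whence $\Tr_X^e(A)(\eta)\neq 0$ by Remark~\ref{trace-local}. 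If you want to salvage your approach, you need an input of exactly this kind in the small-degree regime; the torsion-point trick is the idea your outline is missing.
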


\begin{proof}
Fix $e\in\mathbb Z_{>0}$. 
Since $A$ is ample, we have $\deg A\geq 1.$ 
We consider two cases: $\deg A>1$ and $\deg A=1$. 

\setcounter{step}{0}

\begin{step}
In this step, 
we assume $\deg A>1$ and we prove the assertion. 
The following argument follows the proof of \cite[Theorem~3.3]{Schwede2}. 

Fix a point $Q\in X$. 
By Lemma~\ref{trace-diagram}, 
we obtain the following commutative diagram: 

{\Small $$\begin{CD}
H^0(X, K_X+p^eA)@>>> H^0(p^eQ, K_{p^eQ}+p^e(A-Q))@.\to H^1(X, K_X+p^e(A-Q))\\
@VV\Tr_{X}^e(A) V @VV \Psi V\\
H^0(X, K_X+A)@>\rho >> H^0(Q, K_Q+A-Q).
\end{CD}$$}
By Serre duality, we obtain the vanishing 
$$H^1(X, K_X+p^e(A-Q))=0.$$
On the other hand, $\Psi$ is surjective because the following composition 
\begin{eqnarray*}
\Tr_Q^e:H^0(Q, K_Q+p^e(A-Q))&\to &H^0(p^eQ, K_{p^eQ}+p^e(A-Q))\\
&\overset{\Psi}\to& H^0(Q, K_Q+A-Q)
\end{eqnarray*}
is surjective. 
Therefore, the composition map $\rho \circ \Tr_X^e(A)$ is surjective. 
We conclude that $\Tr_{X}^e(A)$ is a nonzero map since $H^0(Q, K_Q+A-Q)\neq 0$. 
\end{step}

\begin{step}\label{good-1-form}
In this step, we prove that 
if $\deg A=1$, then there exists a point $Q\in X$ such that 
the natural injective map 
$$H^0(X, \omega_X(p^eA-p^eQ))\to H^0(X, \omega_X(p^eA-(p^e-1)Q))$$
is not surjective.

Since $H^0(Q, L|_Q)\simeq k$ for every invertible sheaf $L$ on $X$, 
we obtain the following exact sequence 
\begin{eqnarray*}
0&\to& H^0(X, \omega_X(p^eA-p^eQ))\to H^0(X, \omega_X(p^eA-(p^e-1)Q))\to k\\
&\to& H^1(X, \omega_X(p^eA-p^eQ)).
\end{eqnarray*}
Therefore, it is sufficient to show 
$$h^1(X, \omega_X(p^eA-p^eQ))=h^0(X, -(p^eA-p^eQ))=0$$
for some point $Q\in X$. 
Note that the first equality follows from Serre duality. 
Assume the contrary, that is, 
assume that $p^eA\sim p^eQ$ for every point $Q\in X$. 
Since the genus $g(X)$ is not zero, 
there exists a nonzero $l$-torsion divisor $D$ for a prime number $l\neq p$. 
Note that $D$ is not $p^e$-torsion. 
Take the prime decomposition 
$$D=\sum m_iQ_i-\sum n_jR_j.$$
Since $\deg D=\sum m_i-\sum n_j=0$, 
we obtain the following contradiction 
\begin{eqnarray*}
p^eD&=&\sum m_ip^eQ_i-\sum n_jp^eR_j\\
&\sim&\sum m_ip^eA-\sum n_jp^eA\\
&=&(\sum m_i-\sum n_j)p^eA\\
&=&0.
\end{eqnarray*}
\end{step}

\begin{step}
In this step, 
we assume $\deg A=1$ and we prove the assertion in the theorem. 

We fix a point $Q\in X$ as in Step~\ref{good-1-form}. 
If $A\sim A'$, then the corresponding trace maps are the same by Lemma~\ref{D-linearequ}. 
Therefore, we may assume that $Q\not\in \Supp A$. 
By Step~\ref{good-1-form}, there exists an element 
$$\eta\in H^0(X, \omega_X(p^eA-(p^e-1)Q))\setminus H^0(X, \omega_X(p^eA-p^eQ)).$$
Take the local ring $(R, \mathfrak m)$ corresponding to the point $Q$. 
Note that $F^e_*R$ is a free $R$-module. 
Let $\{x\}$ be the $p$-basis. 
Then, we obtain 
$$\omega_R=\bigoplus_{0\leq i<p^e}R^{p^e}x^idx.$$
Thus, we can write 
$$\eta|_{\Spec R}=\sum_{0\leq i<p^e}f_i^{p^e}x^idx.$$
The fact that $\eta\not\in H^0(X, \omega_X(p^eA-p^eQ))$ means 
$f_i\not\in \mathfrak m$ for some $0\leq i<p^e$. 
Since $\eta\in H^0(X, \omega_X(p^eA-(p^e-1)Q))$, 
we have $f_i\in \mathfrak m$ for every $0\leq i<p^e-1$. 
Therefore, we obtain $f_{p^e-1}\not\in \mathfrak m.$
Then, we can find $c\in k^{\times}$ and $\mu\in \mathfrak m$ 
such that 
$$f_{p^e-1}=c+\mu.$$
By Remark~\ref{trace-local}, we see that $\Tr_{X}^e(A)(\eta)|_{\Spec R}\neq 0.$
\end{step}
\end{proof}

\begin{cor}\label{curve-trace-cor}
Let $X$ be a smooth projective curve. 
Let $A$ be an ample \Z-divisor. 
If $H^0(X, \omega_X(A))\neq 0$, 
then for every $e\in\mathbb Z_{>0}$, the trace map 
$$\Tr_X^e(A):H^0(X, \omega_X(p^eA))\to H^0(X, \omega_X(A))$$
is a nonzero map. 
\end{cor}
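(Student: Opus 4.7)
The proof proposal is to reduce to Theorem~\ref{curve-trace-nonzero} in the positive-genus case and to handle $g(X)=0$ separately via $F$-splittings.

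First, split into two cases according to the genus of $X$. If $g(X) \geq 1$, then the hypotheses of Theorem~\ref{curve-trace-nonzero} are satisfied (we are given $A$ ample), and that theorem already asserts that $\Tr_X^e(A)$ is a non-zero map for every $e \in \mathbb{Z}_{>0}$. So in this case the assumption $H^0(X, \omega_X(A)) \neq 0$ is not even needed; the statement is immediate.

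The remaining case is $g(X) = 0$, which forces $X \simeq \mathbb{P}^1_k$. The plan here is to invoke Proposition~\ref{F-split-surje}: $\mathbb{P}^1$ is $F$-split, since the $p$-th power map $k[x] \to k[x]$ is split as a $k[x]$-module map by the projection onto the $k[x^p]$-submodule generated by $1$, and these splittings glue along the standard affine cover of $\mathbb{P}^1$. Therefore Proposition~\ref{F-split-surje} applied with $D = A$ gives that
\[
\Tr_X^e(A) \colon H^0(X, \omega_X(p^e A)) \to H^0(X, \omega_X(A))
\]
is \emph{surjective} for every $e \in \mathbb{Z}_{>0}$. Combined with the hypothesis $H^0(X, \omega_X(A)) \neq 0$, surjectivity forces the map to be non-zero, which is exactly what we want.

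There is essentially no obstacle here: both cases reduce to results already established in the excerpt. The only point that deserves a sentence of justification in the write-up is the $F$-splitting of $\mathbb{P}^1$; everything else is a direct citation of Theorem~\ref{curve-trace-nonzero} and Proposition~\ref{F-split-surje}. One could also observe in passing that the hypothesis $H^0(X, \omega_X(A)) \neq 0$ is only used in the genus-zero case, where without it the statement would be vacuous (since $\omega_{\mathbb{P}^1}(A)$ may have no global sections when $\deg A \leq 1$).
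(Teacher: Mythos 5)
Your proposal is correct and follows exactly the same route as the paper: the case $g(X)\geq 1$ is handled by Theorem~\ref{curve-trace-nonzero}, and the case $X\simeq\mathbb P^1$ by the $F$-splitting of $\mathbb P^1$ together with Proposition~\ref{F-split-surje}, with nonvanishing of the target giving the conclusion. The extra remarks on gluing the splitting and on where the hypothesis $H^0(X,\omega_X(A))\neq 0$ is actually used are accurate but not needed.
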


\begin{proof}
If $g(X)\geq 1$ where $g(X)$ is the genus of $X$, then 
the assertion follows from Theorem~\ref{curve-trace-nonzero}. 
Thus, we may assume that $X\simeq \mathbb P^1$. 
Since $\mathbb P^1$ is $F$-split, 
the trace map is surjective by Proposition~\ref{F-surje}. 
\end{proof}

In characteristic zero, the following result follows using the Kodaira vanishing theorem. 
In positive characteristic, we obtain the following result by  the trace map of Frobenius.

\begin{cor}\label{surface-restriction}
Let $X$ be a smooth projective surface and 
let $C$ be a smooth prime divisor on $X$. 
Let $A$ be an ample \Z-divisor on $X$. 
If $H^0(C, K_C+A|_C)\neq0$, then 
the natural restriction map 
$$H^0(X, K_X+C+A)\to H^0(C, K_C+A|_C)$$
is a nonzero map. 
\end{cor}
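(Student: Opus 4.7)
The plan is to follow exactly the strategy sketched in the introduction, i.e.\ to compare the bottom restriction map $\rho$ with an analogous restriction in large Frobenius degree $p^e$ by means of the trace diagram of Lemma~\ref{trace-diagram}(1), and then reduce the nonvanishing assertion to the curve case handled by Corollary~\ref{curve-trace-cor}.

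Concretely, I would first fix $e \in \mathbb{Z}_{>0}$ large enough that the Serre vanishing $H^{1}(X, K_X + p^{e}A) = 0$ holds; this is possible because $A$ is ample on $X$. Tensoring the exact sequence $0 \to \omega_X \to \omega_X(C) \to \omega_C \to 0$ by $\mathcal{O}_X(p^{e}A)$ and taking cohomology then shows that the top restriction map $H^{0}(X, K_X + C + p^{e}A) \to H^{0}(C, K_C + p^{e}A)$ in the diagram
\[
\begin{CD}
H^{0}(X, K_X + C + p^{e}A) @>>> H^{0}(C, K_C + p^{e}A) \\
@VV\Tr_{X,C}^{e}(A)V @VV\Tr_{C}^{e}(A)V \\
H^{0}(X, K_X + C + A) @>\rho>> H^{0}(C, K_C + A)
\end{CD}
\]
is surjective, where the two vertical trace maps on global sections come from Lemma~\ref{trace-diagram}(1) after twisting by $\mathcal{O}_X(p^{e}A)$ (and using that $C$ is effective for the left vertical map).

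Next, I would observe that $A|_C$ is ample on the smooth projective curve $C$, and, by hypothesis, $H^{0}(C, K_C + A) \neq 0$. Therefore Corollary~\ref{curve-trace-cor} applies to $(C, A|_C)$ and tells us that the right vertical map $\Tr_{C}^{e}(A)$ is nonzero. Choose $t \in H^{0}(C, K_C + p^{e}A)$ with $\Tr_{C}^{e}(A)(t) \neq 0$, lift $t$ through the surjective top row to a section $\tilde{t} \in H^{0}(X, K_X + C + p^{e}A)$, and chase the diagram: commutativity of the square forces $\rho(\Tr_{X,C}^{e}(A)(\tilde{t})) = \Tr_{C}^{e}(A)(t) \neq 0$, so $\rho$ is not the zero map.

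The only potentially delicate points here are bookkeeping, rather than substantive obstacles: one must confirm that twisting the diagram of Lemma~\ref{trace-diagram}(1) by the Cartier divisor $A$ produces the displayed commutative square (in particular identifying the left vertical map with $\Tr_{X,C}^{e}(A)$ and the right vertical map with $\Tr_{C}^{e}(A)$), and that the restriction $A|_C$ is still ample so that Corollary~\ref{curve-trace-cor} genuinely applies. All of the real content is already packaged in Corollary~\ref{curve-trace-cor} and in the Frobenius trace formalism of Section~2, so there is no further work beyond this diagram chase.
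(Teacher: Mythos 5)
Your proposal is correct and is essentially identical to the paper's own proof: both use the commutative diagram of Lemma~\ref{trace-diagram}(1) twisted by $A$, kill $H^1(X,K_X+p^eA)$ by Serre vanishing for $e\gg 0$ to make the top restriction surjective, and invoke Corollary~\ref{curve-trace-cor} for the nonvanishing of $\Tr_C^e(A)$ before chasing the diagram. No gaps.
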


\begin{proof}
By Lemma~\ref{trace-diagram}, 
we obtain the following commutative diagram 
{\Small $$\begin{CD}
H^0(X, K_X+C+p^eA)@>>> H^0(C, K_{C}+p^eA|_C)@>>> H^1(X, K_X+p^eA)\\
@VV\Tr_{X, C}^e(A) V @VV \Tr_{C}^e(A|_C) V\\
H^0(X, K_X+C+A)@>>> H^0(C, K_C+A|_C).
\end{CD}$$}
We see $H^1(X, K_X+p^eA)=0$ for $e\gg 0$ by the Serre vanishing theorem. 
Thus the assertion follows from Corollary~\ref{curve-trace-cor}. 
\end{proof}

In characteristic zero, in the above situation, 
the restriction map is surjective by the Kodaira vanishing theorem. 
However, in positive characteristic, 
the restriction map is not surjective in general. 

\begin{ex}\label{cex-surjective}
There exists a smooth projective surface $X$, 
a smooth prime divisor $H$ on $X$, and 
an ample \Z-divisor $A$ 
such that 
\begin{enumerate}
\item{$|K_X+H+A|$ is basepoint free. }
\item{The natural restriction map 
$$H^0(X, K_X+H+A)\to H^0(H, K_H+A|_H)$$
is not surjective. }
\end{enumerate}
\end{ex}

\begin{proof}[Construction]
Let $X$ be a smooth projective surface and 
let $A$ be an ample \Z-divisor on $X$ such that 
$$H^1(X, K_X+A)\neq 0.$$
We can find such a surface by \cite{Raynaud}.
Take a smooth hyperplane section $H$ of $X$ such that 
$|K_X+H+A|$ is basepoint free and 
$$H^1(X, K_X+H+A)=0.$$
Consider the exact sequence 
$$0\to \mathcal O_X(K_X+A)\to\mathcal O_X(K_X+H+A)\to\mathcal O_H(K_H+A|_H)\to0.$$
Then, we obtain the following exact sequence 
$$H^0(X, K_X+H+A)\to H^0(H, K_H+A|_H)\to H^1(X, K_X+A)\to 0.$$
Since $H^1(X, K_X+A)\neq 0$, the restriction map is not surjective. 
\end{proof}

We use the following corollary in Section~8.

\begin{cor}\label{nonzero-criterion}
Let $X$ be a smooth projective surface. 
Let $L$ be a \Z-divisor on $X$ such that 
$$L=C+M,$$
where $C$ is a smooth prime divisor and $M$ is a nef and big \Z-divisor such that $M|_C$ is ample. 
If $H^0(C, K_C+M|_C)\neq 0$, then the trace map 
$$\Tr_X^e(L):H^0(X, \omega_X(p^eL))\to H^0(X, \omega_X(L))$$
is a nonzero map. 
\end{cor}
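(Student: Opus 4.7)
The plan is to reduce the statement to the non-vanishing of the trace map on the curve $C$, which is supplied by Corollary~\ref{curve-trace-cor}. By Remark~\ref{pE-vs-E}, the map $\Tr_X^e(L)=\Tr_X^e(C+M)$ agrees with the auxiliary map $\Tr_{X,C}^e(M)$ on the subspace $H^0(X, K_X+C+p^eM)\subset H^0(X, K_X+p^eL)$, via the natural injection $F_*^e(\omega_X(C+p^eM))\hookrightarrow F_*^e(\omega_X(p^eL))$. So it suffices to verify that $\Tr_{X,C}^e(M)$ induces a non-zero map on $H^0$.

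To do this, I would tensor the short exact sequence of Lemma~\ref{trace-diagram}(1) with $\mathcal O_X(M)$ (using the projection formula for $F_*^e$) and pass to cohomology; this yields the commutative diagram
\[
\begin{CD}
H^0(X, K_X+C+p^eM) @>\alpha>> H^0(C, K_C+p^eM|_C) @>\delta>> H^1(X, K_X+p^eM) \\
@VV \Tr_{X,C}^e(M) V @VV \Tr_C^e(M|_C) V \\
H^0(X, K_X+C+M) @>\beta>> H^0(C, K_C+M|_C).
\end{CD}
\]
Since $M|_C$ is ample on the smooth curve $C$ and $H^0(C, K_C+M|_C)\neq 0$ by hypothesis, Corollary~\ref{curve-trace-cor} gives that $\Tr_C^e(M|_C)$ is a non-zero map for every $e\in\mathbb Z_{>0}$. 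If in addition I can ensure that $\alpha$ is surjective for $e$ large, then the composition $\Tr_C^e(M|_C)\circ\alpha=\beta\circ\Tr_{X,C}^e(M)$ is non-zero, whence $\Tr_{X,C}^e(M)$ is non-zero on $H^0$; via the factorization of Remark~\ref{pE-vs-E} this gives $\Tr_X^e(L)\neq 0$ on $H^0$ for every $e\gg 0$.

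Surjectivity of $\alpha$ follows from $H^1(X, K_X+p^eM)=0$. For $e\gg 0$ this is the content of Fujita's vanishing theorem in positive characteristic applied to the nef and big divisor $M$. Finally, to obtain the conclusion for every $e\in\mathbb Z_{>0}$ rather than only for $e\gg 0$, I would use the factorization $\Tr_X^{e+1}(L)=\Tr_X^e(L)\circ F_*^e(\Tr_X^1(p^eL))$ of Lemma~\ref{e-vs-e+1}: non-vanishing of $\Tr_X^{e+1}(L)$ on $H^0$ forces $\Tr_X^e(L)$ on $H^0$ to be non-zero, so a downward induction starting from arbitrarily large $e$ covers all $e\geq 1$.

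The principal difficulty is the cohomological vanishing $H^1(X, K_X+p^eM)=0$ for $e\gg 0$. Because $M$ is only assumed nef and big rather than ample, ordinary Serre vanishing is insufficient, and one instead invokes Fujita vanishing (valid over fields of positive characteristic), combined if necessary with Kodaira's lemma to extract an ample Cartier subdivisor of $p^eM$ with nef complement.
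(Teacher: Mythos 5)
Your overall strategy is the same as the paper's: reduce to the curve $C$ via a commutative trace diagram, apply Corollary~\ref{curve-trace-cor} to the ample divisor $M|_C$ (using $H^0(C,K_C+M)\neq 0$) to get non-vanishing of the curve-level trace map, kill the connecting obstruction by $H^1(X,K_X+p^eM)=0$ for $e\gg 0$, and descend to all $e$ by the factorization of Lemma~\ref{e-vs-e+1}. The only cosmetic difference is that the paper uses diagram (2) of Lemma~\ref{trace-diagram} (the thickened divisor $p^eC$ and the map $\Psi$ through which $\Tr_C^e$ factors), while you use diagram (1) twisted by $M$ together with the inclusion of Remark~\ref{pE-vs-E}; the two are interchangeable, and your explicit downward induction for small $e$ is a step the paper leaves implicit.

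The genuine gap is your justification of $H^1(X,K_X+p^eM)=0$ for $e\gg 0$, which you yourself flag as the principal difficulty. You propose to ``extract an ample Cartier subdivisor of $p^eM$ with nef complement'' via Kodaira's lemma and then apply Fujita vanishing. This cannot work as stated: if $p^eM=A'+N'$ with $A'$ ample and $N'$ nef, then $p^eM$ would itself be ample, whereas $M$ is only nef and big. Kodaira's lemma gives $cM\sim A'+E$ with $E$ \emph{effective}, not nef, and the effective part cannot simply be absorbed into the coherent sheaf in Fujita's theorem: the Fujita threshold depends on that sheaf, so peeling off more copies of $A'$ forces more copies of $E$ into the sheaf and the argument becomes circular. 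The vanishing $H^1(X,\omega_X(nM))=0$ for $n\gg 0$ with $M$ nef and big on a surface in positive characteristic is a genuine theorem, not a formal consequence of Fujita plus Kodaira; the paper invokes \cite[Theorem~2.6]{T1} for precisely this. With that citation substituted for your proposed derivation, your proof is complete and agrees with the paper's.
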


\begin{proof}
By Lemma~\ref{trace-diagram}, 
we obtain the following commutative diagram 
{\Small $$\begin{CD}
H^0(X, K_X+p^eL)@>>> H^0(p^eC, K_{p^eC}+p^eM|_C)@>>> H^1(X, K_X+p^eM)\\
@VV\Tr_{X}^e(L) V @VV \Psi V\\
H^0(X, K_X+L)@>>> H^0(C, K_C+M|_C).
\end{CD}$$}
By \cite[Theorem~2.6]{T1}, we have $H^1(X, K_X+p^eM)=0$ for $e\gg 0$. 
By Corollary~\ref{curve-trace-cor}, 
$\Psi$ is a nonzero map because the composition  
\begin{eqnarray*}
\Tr_C^e(M|_C):H^0(C, K_{C}+p^eM|_C)&\to &H^0(p^eC, K_{p^eC}+p^eM|_{p^eC})\\
&\overset{\Psi}\to& H^0(C, K_C+M|_C) 
\end{eqnarray*}
is nonzero. 
Therefore, also the trace map $\Tr_{X}^e(L)$ is a nonzero map. 
\end{proof}

\section{Surjectivity of the trace maps for surfaces ($\kappa=1$)}

In this section, we show the surjectivity of the trace map 
$$H^0(X, \omega_X(p^e(A+m(K_X+A))))\to H^0(X, \omega_X(A+m(K_X+A)))$$
when $X$ is a surface and $\kappa(X, K_X+A)=1$. 
For this, we establish the following vanishing result.

\begin{prop}\label{ruled-R1}
Let $C$ be a smooth curve. 
Let $Y:=\mathbb P^1\times C$ and 
let $\pi:Y\to C$ be the projection. 
Let $f:X\to Y$ be the blowup at one point and 
let 
$$\theta:X\overset{f}\to Y\overset{\pi}\to C.$$ 
If $A_X$ is a $\theta$-ample \Z-divisor on $X$, then 
$$R^1\theta_*(B_X^2(A_X))=0.$$
\end{prop}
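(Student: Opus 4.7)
The plan is to reduce the vanishing $R^1\theta_*(B_X^2(A_X))=0$ to a fiberwise statement via cohomology and base change, and then verify the fiberwise vanishing using the $F$-splitness of the fibers of $\theta$.

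First, I would establish that $B_X^2(A_X)$ is $\theta$-flat. Tensoring the exact sequence $0\to B_X^2\to F_*\omega_X\to \omega_X\to 0$ from Remark~\ref{cartier-remark} with the line bundle $\mathcal O_X(A_X)$ and applying the projection formula yields
$$0\to B_X^2(A_X)\to F_*(\omega_X(pA_X))\to \omega_X(A_X)\to 0.$$
Since $F\colon X\to X$ is finite flat (Kunz), $F_*(\omega_X(pA_X))$ is locally free of finite rank over $\mathcal O_X$; combined with the line bundle $\omega_X(A_X)$, both outer terms are $\theta$-flat, and hence so is the kernel $B_X^2(A_X)$. Since $\theta$ has relative dimension one, $R^2\theta_*=0$; then cohomology and base change yields an isomorphism
$$R^1\theta_*(B_X^2(A_X))\otimes k(c)\cong H^1(X_c, B_X^2(A_X)|_{X_c})$$
for every closed point $c\in C$. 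By Nakayama, it suffices to prove each fiberwise $H^1$ vanishes.

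Next, I would analyze the fibers of $\theta$. Each is an arithmetic genus zero rational curve: a smooth $\mathbb{P}^1$ for $c$ away from the image $p_0:=\pi(f(E))$ of the blowup center, and the nodal union $E\cup\widetilde F$ of two $(-1)$-curves meeting at one node for $c=p_0$. Both types are $F$-split, and the normal bundle $N_{X_c/X}$ is trivial. For a smooth fiber $X_c\cong\mathbb{P}^1$, a direct local calculation using a $p$-basis $\{t,x\}$ (with $t$ cutting out $X_c$) identifies, after restricting the short exact sequence above to $X_c$, the sheaf $B_X^2(A_X)|_{X_c}$ as an iterated extension whose graded pieces are $(p-1)$ copies of $F_{X_c,*}(\omega_{X_c}(pA_X|_{X_c}))$ together with $B_{X_c}^1(A_X|_{X_c})$. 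Since $A_X|_{X_c}$ is ample and $\mathbb{P}^1$ is $F$-split, Proposition~\ref{F-split-B-vanish} gives $H^1(X_c, B_{X_c}^1(A_X|_{X_c}))=0$; the other graded pieces have $H^1=0$ by a degree estimate, because $\omega_{X_c}(pA_X|_{X_c})$ has degree at least $p-2\geq 0$ on $\mathbb{P}^1$.

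Finally, the nodal fiber $X_{p_0}=E\cup\widetilde F$ is handled by a Mayer--Vietoris argument: the sequence
$$0\to B_X^2(A_X)|_{X_{p_0}}\to B_X^2(A_X)|_E\oplus B_X^2(A_X)|_{\widetilde F}\to B_X^2(A_X)|_{\mathrm{node}}\to 0,$$
combined with the vanishing of $H^1$ on each component $E,\widetilde F\cong\mathbb{P}^1$ established in the smooth case, reduces the vanishing of $H^1(X_{p_0}, B_X^2(A_X)|_{X_{p_0}})$ to a surjectivity check onto the finite-dimensional stalk at the node. The main obstacle is the careful local identification of $B_X^2(A_X)|_{X_c}$, especially its behavior near the node in the singular fiber; this requires a delicate analysis of the Frobenius pushforward along the inclusion $X_c\hookrightarrow X$ at a point where $X_c$ is not smooth.
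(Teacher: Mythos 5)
Your reduction to the fiberwise statement agrees with the paper's Step 2 (cohomology and base change plus Nakayama), and your direct computation of $H^1(X_c, B_X^2(A_X)|_{X_c})$ for a \emph{smooth} fiber is sound: the restriction of $0\to B_X^2(A_X)\to F_*(\omega_X(pA_X))\to\omega_X(A_X)\to 0$ to $X_c$ does filter $B_X^2(A_X)|_{X_c}$ with graded pieces as you describe, and the $H^1$ of each piece vanishes. But the proof is genuinely incomplete at the singular fiber, and you say so yourself: the surjectivity onto the stalk at the node, and the local structure of $B_X^2(A_X)|_{X_{p_0}}$ there, are exactly the hard points, and you leave them as "a delicate analysis" rather than carrying them out. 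There is also a smaller inaccuracy feeding into this: you invoke "the vanishing of $H^1$ on each component $E,\widetilde F\cong\mathbb P^1$ established in the smooth case," but $E$ and $\widetilde F$ are $(-1)$-curves, not fibers, so $\mathcal O_X(E)|_E\simeq\mathcal O_{\mathbb P^1}(-1)$ is not trivial and the identification of the graded pieces of $B_X^2(A_X)|_E$ is different from the smooth-fiber computation (the twists $\mathcal O(-iE)|_E\simeq\mathcal O(i)$ actually help, but the bottom graded piece involves $B^1_{\mathbb P^1}\simeq\mathcal O(-1)^{\oplus(p-1)}$ and must be checked separately, both for $H^1$-vanishing and for global generation at the node). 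As it stands, the singular fiber case is asserted, not proved.

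The paper avoids this entire local analysis. After the same fiberwise reduction, it first treats the case $C\simeq\mathbb P^1$: there $X$ is a toric surface, hence $F$-split, so the \emph{global} vanishing $H^1(X, B_X^2(\text{ample}))=0$ (Proposition~\ref{F-split-B-vanish}) combined with the Leray sequence gives $R^1\theta_*(B_X^2(A_X))=0$, and then the fiberwise criterion, read in the other direction, yields $H^1$-vanishing on the singular fiber with no computation at the node. For a general base curve $C$, an \'etale base change identifies a neighbourhood of the singular fiber with the one in the rational model, and Lemma~\ref{cartier-etale} ($\gamma^*B^2_{X'}\simeq B^2_X$) transports the vanishing back. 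If you want to keep your direct approach, you must actually carry out the node-level computation (local description of $B_X^2$ in coordinates adapted to $E\cup\widetilde F$, the component-wise degree bookkeeping for $(-1)$-curves, and the surjectivity onto the node stalk); alternatively, adopting the $F$-split/\'etale-descent route closes the gap without it.
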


\begin{proof}
\setcounter{step}{0}

\begin{step}\label{step-rational}
In this step, 
we assume $C$ is rational and we prove the assertion. 

Since the assertion is local on $C$, we may assume $C\simeq \mathbb P^1.$ 
For an arbitrary ample \Z-divisor $A_C$ on $C$, 
by the Leray spectral sequence, 
we obtain the following exact sequence  
\begin{eqnarray*}
0&\to & H^1(C, \theta_*(B_X^2(A_X))\otimes \mathcal O_C(A_C))\\
&\to & H^1(X, B_X^2(A_X+\theta^*A_C))\\
&\to & H^0(C, R^1\theta_*(B_X^2(A_X))\otimes \mathcal O_C(A_C))\to 0.\\
\end{eqnarray*}
Let $A_C$ be an ample \Z-divisor on $C$ such that 
\begin{enumerate}
\item{$A_X+\theta^*A_C$ is ample, and}
\item{$R^1\theta_*(B_X^2(A_X))\otimes \mathcal O_C(A_C)$ 
is generated by global sections. }
\end{enumerate}
Then, it is sufficient to show 
$$H^1(X, B_X^2(A_X+\theta^*A_C))=0.$$
Since $X$ is a toric variety hence $F$-split, this follows from Proposition~\ref{F-split-B-vanish}. 
\end{step}

\begin{step}\label{step-H1-R1}
In this step, we prove that 
the following assertions are equivalent. 
\begin{enumerate}
\item{$R^1\theta_*(B_X^2(A_X))=0$.}
\item{$H^1(X_c, B_X^2(A_X)|_{X_c})=0$ for every closed point $c\in C$ where $X_c=\theta^{-1}(c)$.} 
\end{enumerate}
By \cite[Theorem~12.11]{Hartshorne}, 
there exists an isomorphism 
$$R^1\theta_*(B_X^2(A_X))\otimes k(c)\simeq H^1(X_c, B_X^2(A_X)|_{X_c}).$$
By Nakayama's lemma, if 
$$R^1\theta_*(B_X^2(A_X))\otimes k(c)=0,$$
then $R^1\theta_*(B_X^2(A_X))|_{U}=0$ for some open neighborhood $c\in U\subset C$. 
\end{step}

\begin{step}
In this step, we show 
$H^1(X_c, B_X^2(A_X)|_{X_c})=0$ 
for the closed points $c\in C$ other than the one corresponding to the singular fiber. 
Fix such a closed point $c\in C$. 
We can shrink $C$ around $c\in C$. 
Thus, we may assume that $X=Y=\mathbb P^1 \times C \to C$. 
We can find a cartesian diagram
$$\begin{CD}
X':=\mathbb P^1 \times C'@<\gamma_X<< \mathbb P^1\times C=X\\
@VV\theta'V @VV\theta V\\
C'@<\gamma_C<< C
\end{CD}$$
such that $C'\simeq \mathbb A^1$ and $\gamma_C$ and $\gamma_X$ are finite \'etale morphisms. 
Note that $\gamma_X^*B_{X'}^2\simeq B_X^2$ 
by Lemma~\ref{cartier-etale}. 
We can find a $\theta'$-ample \Z-divisor $A_{X'}$ on $X'$ such that 
$(\gamma_X^*(\mathcal O_{X'}(A_{X'})))|_{X_{c}}\simeq\mathcal O_X(A_X)|_{X_c}$. 
Therefore we obtain 
\begin{eqnarray*}
H^1(X_c, B_X^2(A_X)|_{X_c})&=& H^1(X_c, \gamma_X^*B_{X'}^2(\gamma_X^*A_{X'}))\\
&=& H^1(X'_{c'}, B_{X'}^2(A_{X'}))\\
&=&0. 
\end{eqnarray*}
The last equality follows from Step~\ref{step-rational} and Step~\ref{step-H1-R1}. 
\end{step}

\begin{step}
In this step, we show 
$H^1(X_c, B_X^2(A_X)|_{X_c})=0$ 
for the closed point $c\in C$ corresponding to the singular fiber. 

We can replace $C$ by a neighborhood of $c$. 
We find a commutative diagram
$$\begin{CD}
X'@<\gamma_X<< X \\
@VVf'V @VVf V\\
Y'@<\gamma_Y<< Y\\
@VV\pi'V @VV\pi V\\
C'@<\gamma_C<< C
\end{CD}$$
where $C'\simeq \mathbb A^1$, 
each square is a fiber product, 
$\gamma_C, \gamma_Y$ and $\gamma_X$ are finite \'etale morphisms. 
Let $\theta':=\pi'\circ f'$ and $c':=\gamma_C(c).$  
Note that $\gamma_X^*B_{X'}^2\simeq B_X^2$ 
by Lemma~\ref{cartier-etale}. 
Then, $\gamma_X|_{X_c}:X_c\to X'_{c'}$ 
is an isomorphism. 

We show that there exists a $\theta'$-ample \Z-divisor $A_{X'}$ on $X'$ such that 
$(\gamma_X^*(\mathcal O_{X'}(A_{X'})))|_{X_{c}}\simeq\mathcal O_X(A_X)|_{X_c}$. 
The fiber $X_c \simeq X'_{c'}=:D$ is a reduced simple normal crossing divisor 
$D_1 \cup D_2$, with $D_i \simeq \mathbb P^1$. 
We can assume that $D_1$ is the $f'$-exceptional curve and 
$D_2$ is the proper transform of a fiber of $Y' \to C'$. 
We consider the following exact sequence: 
$$1 \to \MO_D^{\times} \to \MO_{D_1}^{\times} \times \MO_{D_2}^{\times} \to \MO_{D_1 \cap D_2}^{\times} \to 1.$$
Since the intersection $D_1 \cap D_2$ is one point, 
$$H^0(X, \MO_{D_1}^{\times} \times \MO_{D_2}^{\times}) \to H^0(X, \MO_{D_1 \cap D_2}^{\times})$$
is surjective. 
Thus, we obtain the following group isomorphism 
$${\rm Pic} D \xrightarrow{\simeq}  {\rm Pic} D_1 \times{\rm Pic} D_2,\,\,\, L \mapsto (L|_{Y_1}, L|_{Y_2}).$$
Therefore it suffices to show that, 
for given positive integers $a_1, a_2 \in \mathbb Z_{>0}$, 
there exists an invertible sheaf $A_{X'}$ on $X'$ such that $A_{X'}|_{D_i}\simeq \MO_{D_i}(a_i)$ for each $i=1, 2$. 
Consider a section $S'$ of $Y' \to C'$ passing through the blown-up point and 
its proper transform $T' \subset X'$. 
Then, $\MO_{X'}(T')|_{Y_1}\simeq \MO_{D_1}(1)$ and $\MO_{X'}(T')|_{Y_2}\simeq \MO_{D_2}$. 
Since 
$$\MO_{X'}(nT'+mD_1)|_{D_1} \simeq \MO_{D_1}(n-m),\,\,\, \MO_{X'}(nT'+mD_1)|_{D_2} \simeq \MO_{D_1}(m),$$
we are done. 

\medskip

Thus we obtain 
\begin{eqnarray*}
H^1(X_c, B_X^2(A_X))&=& H^1(X_c, \gamma_X^*B_{X'}^2(\gamma_X^*A_{X'}))\\
&=& H^1(X'_{c'}, B_{X'}^2(A_{X'}))\\
&=&0. 
\end{eqnarray*}
The last equality follows from Step~\ref{step-rational} and Step~\ref{step-H1-R1}. 
\end{step}

The assertion in the proposition follows from Step~\ref{step-H1-R1}, Step~3 and Step~4. 
\end{proof}

Let us prove the main theorem in this section. 

\begin{thm}\label{trace-surface-1}
Let $X$ be a smooth projective surface and 
let $A$ be an ample \Z-divisor on $X$ such that $\kappa(X, K_X+A)=1$ and $K_X+A$ is nef. 
Then there exists $m_1\in\mathbb Z_{>0}$ such that
the trace map 
$$\Tr_X^e(A+m(K_X+A)):$$ 
$$H^0(X, K_X+p^e(A+m(K_X+A)))\to H^0(X, K_X+(A+m(K_X+A)))$$
is surjective for every integer $m\geq m_1$ and for every $e\in\mathbb Z_{>0}$. 
\end{thm}

\begin{proof}
\setcounter{step}{0}
\begin{step}
We see that 
$K_X+A$ is semi-ample by \cite[The second theorem in Introduction]{Fujita3}. 
\end{step}

\begin{step}
In this step we prove that, for some $n_0\in\mathbb Z_{>0}$, 
the complete linear system 
$$\Phi_{|n_0(K_X+A)|}=\theta:X\to C$$
gives a ruled surface structure, that is, 
$\theta$ is a projective morphism to a smooth projective curve 
such that $\theta_*\mathcal O_X=\mathcal O_C$ and a general fiber is $\mathbb P^1$. 

We can find $n_0\in\mathbb Z_{>0}$ such that 
$\Phi_{|n_0(K_X+A)|}=\theta:X\to C$ is a projective morphism to a smooth projective curve 
such that $\theta_*\mathcal O_X=\mathcal O_C$. 
By \cite[Corollary~7.3]{Badescu}, general fibers are integral. 
Since a general fiber $F$ satisfies 
$$0=(K_X+A)\cdot F>(K_X+F)\cdot F,$$ 
we see $F\simeq \mathbb P^1.$ 
Thus, $\theta$ gives a ruled surface structure. 
\end{step}

\begin{step}\label{step-R1}
In this step we prove that it is sufficient to show 
$$R^1\theta_*(B_X^2(A'))=0$$
for every ample \Z-divisor $A'$. 

By Remark~\ref{cartier-remark} and Remark~\ref{cartier-trace}, 
we obtain the following exact sequence 
$$0\to B_X^2\to F_*\omega_X\overset{\Tr_X^1}\to \omega_X\to 0.$$
By Lemma~\ref{e-vs-e+1}, 
it suffices to find $m_0\in \mathbb Z_{>0}$ 
such that 
$$H^1(X, B_X^2(p^d(A+m(K_X+A))))=0$$
for every $d \geq 0$ and $m\geq m_0$. 
By the Frujita vanishing theorem, 
we can find $d_0\in\mathbb Z_{>0}$ such that 
if $d>d_0$, then 
$$H^1(X, B_X^2(p^d(A+m(K_X+A))))=0$$
for every $m\geq 0$. 
Therefore, we fix an integer $0\leq d \leq d_0$, and 
it is enough to find $n_d\in\mathbb Z_{>0}$, depending on $d$, such that  
$$H^1(X, B_X^2(p^d(A+m(K_X+A))))=0$$
for every $m\geq n_d$. 
We can write $K_X+A=\theta^*H$ where $H$ is an ample \Z-divisor on $C$. 
By the Leray spectral sequence, we obtain 
\begin{eqnarray*}
0&\to&H^1(C, \theta_*(B_X^2(p^dA))\otimes_{\MO_C} \MO_C(p^dmH))=0\\
&\to&H^1(X, B_X^2(p^d(A+m(K_X+A))))\\
&\to&H^0(C, R^1\theta_*(B_X^2(p^dA))\otimes_{\MO_C} \MO_C(p^dmH))\to 0.
\end{eqnarray*}
If $m\gg 0$, then the first term vanishes by the Serre vanishing theorem. 
Thus, it is sufficient to show that 
$R^1\theta_*(B_X^2(A'))=0$ for every ample \Z-divisor $A'$. 
\end{step}

\begin{step}
In this step, we prove the assertion in the theorem. 
By Step~\ref{step-R1}, it is sufficient to show 
$$R^1\theta_*(B_X^2(A'))=0$$
for every ample \Z-divisor $A'$. 
 
Since $X \to C$ is a ruled surface structure, 
after contracting $(-1)$-curves in fibers, 
we obtain morphisms 
$$\theta:X\overset{f}\to Y\overset{\pi}\to C$$
where $f$ is a birational morphism to a smooth projective surface $Y$. 
Note that every fiber of $\pi$ is irreducible, otherwise we can find a $(-1)$-curve in a reducible fiber. 
By the adjunction formula, every fiber of $\pi$ is $\mathbb P^1$. 
Thus, $\pi$ is a $\mathbb P^1$-bundle structure (cf. \cite[Corollary~11.11]{Badescu}). 
Since the problem is local on $C$, we may assume that 
$\theta$ has only one singular fiber and that 
$Y=\mathbb P^1\times C.$ 
Let $F_s$ be the singular fiber. 
We see that 
$$0=(K_X+A)\cdot F_s=-2+A\cdot F_s.$$
Thus, $F_s$ has at most two irreducible components. 
This implies that $f$ is the blowup at one point or an isomorphism. 
Then, the equation $R^1\theta_*(B_X^2(A'))=0$ follows from Proposition~\ref{ruled-R1}. 
\end{step}
\end{proof}

\section{Surjectivity of the trace maps for surfaces ($\kappa=2$)}

In this section, we show the surjectivity of the trace map 
$$H^0(X, \omega_X(p^e(A+m(K_X+A))))\to H^0(X, \omega_X(A+m(K_X+A)))$$
when $X$ is a surface and $\kappa(X, K_X+A)=2$. 
Let us recall a lemma on global generation. 

\begin{lem}\label{uniform-generation}
Let $X$ be a smooth projective variety. 
Let $A$ be an ample \Z-divisor and let $G$ be a coherent sheaf. 
Then, there exists $n_0\in\mathbb Z_{>0}$, depending only on $A$ and $G$, 
such that 
$$G(n_0A+N)$$ 
is generated by global sections for every nef \Z-divisor $N$. 
\end{lem}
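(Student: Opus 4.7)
The plan is to derive this lemma from Fujita's vanishing theorem combined with Castelnuovo--Mumford regularity, both of which produce global generation from a uniform collection of higher cohomology vanishings. First, I would choose an integer $r\in\mathbb Z_{>0}$ such that $H:=rA$ is very ample. Next, I would invoke Fujita's vanishing theorem (\cite[Theorem~(1)]{Fujita1}, \cite[Section~5]{Fujita2}, already used in this paper) to obtain an integer $n_1\in\mathbb Z_{>0}$, depending only on $G$ and $H$, such that
$$H^i(X,G(n_1H+P))=0$$
for every $i>0$ and every nef $\mathbb Z$-divisor $P$.

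Setting $d:=\dim X$, the candidate bound is $n_0:=r(n_1+d)$. For any nef $\mathbb Z$-divisor $N$ and any $i\in\{1,\dots,d\}$, I would rewrite
$$(n_0A+N)-iH=n_1H+\bigl((d-i)H+N\bigr),$$
and observe that $(d-i)H+N$ is nef because $H$ is ample and $d-i\geq 0$ and $N$ is nef. Applying Fujita vanishing with $P:=(d-i)H+N$ then yields
$$H^i\bigl(X,G(n_0A+N-iH)\bigr)=0\qquad(i=1,\dots,d),$$
which means that $G(n_0A+N)$ is $0$-regular with respect to the very ample divisor $H$ in the sense of Castelnuovo--Mumford.

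Finally, I would appeal to Mumford's classical theorem that any $0$-regular coherent sheaf with respect to a very ample divisor is generated by its global sections; this gives the required global generation of $G(n_0A+N)$, and $n_0$ evidently depends only on $A$ and $G$. The only substantive point is obtaining the vanishing uniformly in the nef divisor $N$, which is exactly the content of Fujita's vanishing theorem; once this uniform vanishing is in place, the Castelnuovo--Mumford regularity machinery assembles the conclusion with no further dependence on $N$, so no real obstacle arises.
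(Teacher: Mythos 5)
Your proof is correct and follows exactly the route the paper indicates: the paper's proof is a one-line citation of Castelnuovo--Mumford regularity together with the Fujita vanishing theorem, and your argument simply fills in the (standard) details of how those two results combine.
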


\begin{proof}
The assertion immediately follows from 
Castelnuovo--Mumford regularity (\cite[Theorem~1.8.5]{Lazarsfeld}) and 
the Fujita vanishing theorem (\cite[Theorem~(1)]{Fujita1}, \cite[Section~5]{Fujita2}). 
\end{proof}

To prove the surjectivity, we establish the following vanishing result. 

\begin{prop}\label{birat-B-vanish}
Let $h:X\to Z$ be a birational morphism between 
smooth projective surfaces. 
Let $A_X$ be an ample \Z-divisor on $X$ and 
let $A_Z$ be an ample \Z-divisor on $Z$. 
Then there exists $m_0\in \mathbb Z_{>0}$ such that 
$$H^1(X, B_X^2(A_X+h^*(m_0A_Z+N_Z)))=0$$
for every nef \Z-divisor $N_Z$ on $Z$.
\end{prop}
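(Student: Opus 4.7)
The plan is to use the Leray spectral sequence for $h\colon X\to Z$ applied to $B_X^2(A_X+h^*M)$ with $M:=m_0A_Z+N_Z$. By the projection formula, $R^qh_*(B_X^2(A_X+h^*M))\simeq R^qh_*(B_X^2(A_X))\otimes\mathcal O_Z(M)$, and the five-term exact sequence reads
$$0\to H^1(Z, h_*B_X^2(A_X)\otimes\mathcal O_Z(M))\to H^1(X, B_X^2(A_X+h^*M))\to H^0(Z, R^1h_*B_X^2(A_X)\otimes\mathcal O_Z(M))\to H^2(Z, h_*B_X^2(A_X)\otimes\mathcal O_Z(M)).$$
For $m_0\gg 0$ and every nef $N_Z$, Fujita vanishing (cf.\ Lemma~\ref{uniform-generation}) applied to the coherent sheaf $h_*B_X^2(A_X)$ and ample $A_Z$ kills both the first and fourth terms. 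Since $R^1h_*B_X^2(A_X)$ is supported on the finite exceptional image of $h$, twisting by the invertible $\mathcal O_Z(M)$ preserves the total dimension of its $H^0$; so the desired vanishing $H^1(X, B_X^2(A_X+h^*M))=0$ reduces to the key assertion $R^1h_*B_X^2(A_X)=0$.

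To establish this, apply $Rh_*$ to the sequence (Remark~\ref{cartier-remark}) twisted by $A_X$:
$$0\to B_X^2(A_X)\to F_*\omega_X(pA_X)\to\omega_X(A_X)\to 0.$$
Since $h$ is birational between smooth surfaces and $A_X$ is $h$-ample, the Grauert--Riemenschneider-type vanishing for surfaces (valid in positive characteristic) gives $R^1h_*\omega_X(A_X)=R^1h_*\omega_X(pA_X)=0$; combined with $h_*F_*=F_*h_*$ (from $h\circ F_X=F_Z\circ h$), the long exact sequence yields
$$R^1h_*B_X^2(A_X)=\mathrm{coker}\bigl(\tau\colon F_*h_*\omega_X(pA_X)\to h_*\omega_X(A_X)\bigr),$$
where $\tau$ is the $h$-pushforward of the trace map $\Tr_X^1(A_X)$.

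The central technical step is to show $\tau$ is surjective as a sheaf map on $Z$. On the open locus where $h$ is an isomorphism, $\tau$ restricts to the trace on $Z$, hence is surjective by Remark~\ref{trace-local} and smoothness of $Z$. Near a point $z\in Z$ below the exceptional locus, pass to the formal completion $\widehat R=k[[z_1,z_2]]$; the pushforwards $h_*\omega_X(A_X)$ and $h_*\omega_X(pA_X)$ take the form $\mathcal J\cdot\omega_{\widehat R}$ and $\mathcal J'\cdot\omega_{\widehat R}$ for certain $\mathfrak m_z$-primary ideals $\mathcal J,\mathcal J'$. By Remark~\ref{trace-local}, for any $g\in\mathcal J$ the element $g^pz_1^{p-1}z_2^{p-1}\,dz_1\wedge dz_2$ maps to $g\,dz_1\wedge dz_2$ under the trace, so surjectivity at $z$ reduces to the ideal containment $g^pz_1^{p-1}z_2^{p-1}\in\mathcal J'$ for every $g\in\mathcal J$. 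For a single point blowup this is immediate from $\mathcal J=\mathfrak m_z^{a-1}$, $\mathcal J'=\mathfrak m_z^{pa-1}$ with $a:=-A_X\cdot E\geq 1$, because $p(a-1)+2(p-1)\geq pa-1$; for general $h$ one factors through a sequence of blowups and bootstraps.

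The main obstacle will be this inductive ideal containment in the multi-blowup case: one must carefully track how $\mathcal J$ and $\mathcal J'$ propagate through the exceptional tree when $h$ is composed of several point blowups. Once $\tau$ is shown to be surjective, $R^1h_*B_X^2(A_X)=0$ follows, and combining this with the Fujita vanishing of the first paragraph yields the required $m_0$.
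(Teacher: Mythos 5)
Your opening reduction is sound and in fact coincides with Step~2 of the paper's argument: Fujita vanishing (via Lemma~\ref{uniform-generation}) kills the $H^1(Z,-)$ term uniformly in $N_Z$, and since $R^1h_*B_X^2(A_X)$ is supported on finitely many points, the proposition is indeed equivalent to $R^1h_*B_X^2(A_X)=0$. Identifying $R^1h_*B_X^2(A_X)$ with the cokernel of the pushed-forward trace map $\tau$ is also legitimate, granted the relative vanishing $R^1h_*\omega_X(A_X)=R^1h_*\omega_X(pA_X)=0$, which you assert but should justify. The genuine gap is that the entire difficulty of the proposition sits in the step you defer: surjectivity of $\tau$ over a point dominated by a nontrivial exceptional tree. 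Your proposed mechanism --- take the explicit preimage $g^p(z_1z_2)^{p-1}\,dz_1\wedge dz_2$ of $g\,dz_1\wedge dz_2$ and check it lies in $\mathcal J'$ --- works for a single blowup (modulo the sign slip: $a$ should be $A_X\cdot E$, not $-A_X\cdot E$), but it breaks down for a chain of three infinitely near points. Writing $A_X=h^*D-\sum a_iE_i$ and $K_{X/Z}=\sum c_iE_i$, membership of $g^p(z_1z_2)^{p-1}$ in $\mathcal J'=h_*\mathcal O_X(K_{X/Z}-p\sum a_iE_i)$ for all $g\in\mathcal J$ forces $2\,\mathrm{ord}_{E_i}(h^*\mathfrak m_z)\geq c_i$ for every $i$; for the third divisor in a tower of blowups at infinitely near points one has $\mathrm{ord}_{E_3}(h^*\mathfrak m_z)=1$ while $c_3=3$, so the containment fails for $g$ of minimal allowed order along $E_3$. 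One would then have to correct the preimage by an element of $\ker\Tr$, and nothing in the proposal indicates how; you yourself flag this as ``the main obstacle,'' so the proof is not complete.

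For comparison, the paper never proves sheaf-level surjectivity of $\tau$. It argues by induction on the number of blowups: after factoring $h=g\circ f$ through the last blowup $g:Y\to Z$, it constructs a chain of divisors $D(0)\leq D(1)\leq\cdots\leq D(S)$ interpolating between $A_X+m_1f^*A_Y$ (where the inductive hypothesis gives surjectivity of the global trace map) and $A_X+h^*(m_0A_Z+N_Z)$, each step adding one exceptional curve $E_j\simeq\mathbb P^1$ while keeping the divisor nef. Surjectivity of the global trace map then propagates along the chain because $\mathbb P^1$ is $F$-split and the intermediate groups $H^1(X,\omega_X(p\,\cdot))$ vanish by the Kawamata--Viehweg-type vanishing for nef and big divisors on surfaces. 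The nefness bookkeeping in Steps~4--5 of the paper is exactly the combinatorial content your ``bootstrap'' would have to supply, and the computation above shows it cannot be replaced by the naive ideal containment.
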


\begin{proof}
The birational morphism $h$ is a composition of $n$ point blowups. 
We prove the assertion by induction on $n$.

\setcounter{step}{0}

\begin{step}\label{step-n=0}
If $n=0$, then the assertion follows from the Fujita vanishing theorem 
(\cite[Theorem~(1)]{Fujita1}, \cite[Section~5]{Fujita2}). 
Thus, we may assume that $n>0$ and the assertion holds for $n-1$. 
\end{step}

\begin{step}\label{step-one-point}
In this step, we prove that we may assume that $h(\Ex(h))$ is one point. 

Assume that the assertion holds when $h(\Ex(h))$ is one point. 
The Leray spectral sequence induces a short exact sequence 
\begin{eqnarray*}
0&\to& H^1(Z, h_*(B_X^2(A_X+h^*(m_0A_Z+N_Z))))=0\\
&\to& H^1(X, B_X^2(A_X+h^*(m_0A_Z+N_Z)))\\
&\to& H^0(Z, R^1h_*(B_X^2(A_X+h^*(m_0A_Z+N_Z))))\to 0\\
\end{eqnarray*}
where the equation 
$H^1(Z, h_*(B_X^2(A_X+h^*(m_0A_Z+N_Z))))=0$ follows from the Fujita vanishing theorem 
(\cite[Theorem~(1)]{Fujita1}, \cite[Section~5]{Fujita2}). 
By Lemma~\ref{uniform-generation}, the following assertions are equivalent. 
\begin{itemize}
\item{$H^1(X, B_X^2(A_X+h^*(m_0A_Z+N_Z)))=0$.}
\item{$R^1h_*(B_X^2(A_X))=0.$}
\end{itemize}
Set $h(\Ex(h))=\{z_0, z_1, \cdots, z_m\}$ and 
$Z_0:=Z\setminus \{z_1, \cdots, z_m\}$. 
We only show $R^1h_*(B_X^2(A_X))|_{Z_0}=0$. 
Let $X'$ be the smooth projective surface 
obtained by patching $Z\setminus \{z_0\}$ and $X\setminus h^{-1}(\{z_1, \cdots, z_m\})$. 
We obtain a birational morphism $h':X' \to Z$ of smooth projective surfaces 
such that $h'(\Ex(h'))$ is equal to $\{z_0\}$ and $h'|_{h'^{-1}(Z_0)}=h|_{h^{-1}(Z_0)}$. 
Let $A_{X'}^{(0)}$ be the closure of $A_X|_{h^{-1}(Z_0)}$. 
Since $A_{X'}^{(0)}$ and $A_X|_{h^{-1}(Z)}$ are the same around $\Ex(h')$, 
we see that $A_{X'}^{(0)}$ is $h'$-ample. 
We fix $n_0\gg 0$ such that $A_{X'}:=A_{X'}^{(0)}+n_0h'^*A_Z$ is ample. 
By our assumption, we obtain 
$$H^1(X', B_{X'}^2(A_{X'}+h'^*(m_0A_Z+N_Z)))=0.$$
By the above argument using the Leray spectral sequence, this is equivalent to 
$$R^1h'_*(B_X^2(A_{X'}))=0.$$
This implies 
$$0=R^1h'_*(B_X^2(A_{X'}^{(0)}))|_{Z_0}=R^1h_*(B_X^2(A_{X}))|_{Z_0}.$$
We are done. 
\end{step}

\medskip
From now on, we assume that $h(\Ex(h))$ is one point.

\begin{step}\label{step-s=0}
We consider the factorization 
$$h:X\overset{f}\to Y\overset{g}\to Z,$$
where $g$ is the blowup of $Z$ at a point. 
Let $E_Y$ be the $g$-exceptional curve. 
Note that $E_Y^2=-1$. 
We see 
$$g^*A_Z-\frac{1}{l} E_Y$$
is an ample \Q-divisor for every large integer $l \gg 0.$ 
Thus, by replacing $A_Z$ with a multiple, we may assume that 
$$A_Y:=g^*A_Z-E_Y$$ 
is an ample \Z-divisor. 
In particular, we obtain 
$$h^*A_Z=f^*A_Y+f^*E_Y.$$
By the induction hypothesis, 
there exists $m_1\in\mathbb Z_{>0}$ such that 
$$H^1(X, B_X^2(A_X+f^*(m_1A_Y+N_Y)))=0$$
for every nef \Z-divisor $N_Y$ on $Y$. 
We have 
$$m_1h^*A_Z=m_1f^*A_Y+m_1f^*E_Y.$$
\end{step}

\begin{step}\label{step-construct-E}
Let $E_1,\cdots, E_n \subset X$ be the $h$-exceptional curves 
where $E_1$ is the proper transform of $E_Y$. 
In this step 
we construct a sequence of \Z-divisors 
$$0=:E(0)\leq E(1)\leq E(2)\leq\cdots\leq E(R-1)\leq E(R):=f^*E_Y$$
such that 
\begin{enumerate}
\item[(a)]{For every $0\leq r\leq R-1$, $E(r+1)-E(r)=E_j$ for some $1\leq j\leq n$.}
\item[(b)]{$E(r)\cdot E_i\geq -1$ for every $0\leq r\leq R$ and for every $1\leq i\leq n$. }
\end{enumerate}

We consider a decomposition into one point blowups:
$$f:X=:X^{(n)}\overset{f^{(n)}}\to\cdots\overset{f^{(3)}}\to X^{(2)}\overset{f^{(2)}}\to X^{(1)}:=Y.$$
We may assume that, for every $2\leq j\leq n$, 
$E_j\subset X$ is the proper transform of the $f^{(j)}$-exceptional curve. 
For $1\leq j\leq i\leq n$, let $E_j^{(i)}\subset X^{(i)}$ be the image of $E_j$ 
(e.g. the $f^{(i)}$-exceptional curve is $E_i^{(i)} \subset X^{(i)}$). 
Let $f^{(i+1)}(\Ex(f^{(i+1)}))=:P^{(i)}\in X^{(i)}$ and denote by $g^{(i)}$ 
the induced map 
$$g^{(i)}:X^{(i)}\to Z.$$ 
Note that $P^{(i)}\in \Ex(g^{(i)})$. 
Since $\Supp(\Ex(g^{(i)}))$ is simple normal crossing, 
there are two cases: 
\begin{enumerate}
\item{$P^{(i)}\in E_j^{(i)}$ for some $j$ and $P^{(i)}\not\in E_{j'}^{(i)}$ for every $j'\neq j$.}
\item{$P^{(i)}\in E_j^{(i)}\cap E_{j'}^{(i)}$ for some $j\neq j'$ and $P^{(i)}\not\in E_{j''}^{(i)}$ for every $j''\neq j, j'$.}
\end{enumerate}
For $1\leq i\leq n$, 
we construct a finite sequence $({\rm Seq})_i$ of prime divisors on $X$ inductively as follows.
Every member of $({\rm Seq})_i$ is equal to $E_j$ for some $j$. 
Let 
\begin{eqnarray*}
({\rm Seq})_1:=(E_1).
\end{eqnarray*}
Assume we obtain $({\rm Seq})_i$. 
We construct $({\rm Seq})_{i+1}$ as follows. 
There are two cases (1) and (2) as above. 
Assume (1), that is, $P_i\in E_j^{(i)}$ and $P_i\not\in E_{j'}^{(i)}$ for every $j'\neq j$. 
If 
$$({\rm Seq})_i=(\cdots, E_j,\cdots, E_{j'},\cdots),$$
then we define $({\rm Seq})_{i+1}$ by 
$$({\rm Seq})_{i+1}:=(\cdots, E_{i+1}, E_j,\cdots, E_{j'},\cdots).$$
In other words, 
we add $E_{i+1}$ only in front of $E_j$ (for each appearance of $E_j$). 
Assume (2), that is, $P^{(i)}\in E_j^{(i)}\cap E_{j'}^{(i)}$ for some $j\neq j'$ and 
$P_i\not\in E_{j''}^{(i)}$ for every $j''\neq j, j'$.
If 
$$({\rm Seq})_i=(\cdots, E_j,\cdots, E_{j'},\cdots, E_{j''},\cdots),$$
then we define $({\rm Seq})_{i+1}$ by 
$$({\rm Seq})_{i+1}:=(\cdots, E_{i+1}, E_j,\cdots, E_{i+1}, E_{j'},\cdots, E_{j''},\cdots).$$
In other words, 
we add $E_{i+1}$ only in front of $E_j$ and $E_{j'}$ (for each appearance of $E_j$ or $E_{j'}$). 
We obtain a finite sequence $({\rm Seq})_i$ for $1\leq i\leq n.$
Let 
$$({\rm Seq})_n=(E_{a(1)}, E_{a(2)}, E_{a(3)},\cdots, E_{a(R)})$$
where $a(l)\in\{1, \cdots, n\}.$ 
We define a finite sequence $({\rm SEQ})$ by 
\begin{eqnarray*}
({\rm SEQ})&=&
(E_{a(1)}, E_{a(1)}+E_{a(2)}, E_{a(1)}+E_{a(2)}+E_{a(3)},\cdots)\\
&=:&(E(1), E(2), E(3),\cdots, E(R)).
\end{eqnarray*} 
It suffices to show $E(r)\cdot E_j\geq -1$ for every $j$ and $E(R)=f^*E_Y$. 
By our construction, we can check that $E(R)=f^*E_Y$. 
We prove $E(r)\cdot E_j\geq -1$ for every $j$ and $r$ by induction on $n$. 

We show that $E(r)\cdot E_n\geq -1$ for every $1\leq r\leq R$. 
We consider the behavior of the sequence 
$$E(1)\cdot E_n, E(2) \cdot E_n, \cdots.$$
If $E(r)\cdot E_n> E(r+1)\cdot E_n$, then $E_{a(r+1)}=E_n$. 
Thus, we consider the subset $K:=\{k_1, \cdots, k_{\nu}\} \subset \{1,\cdots, R\}$ 
with $k_1<k_2< \cdots <k_{\nu}$ 
such that $E_{a(k_1)}=\cdots=E_{a(k_{\nu})}=E_n$ 
and that $E_{a(r')}\neq E_n$ for every $r' \in \{1,\cdots, R\}\setminus K$: 
{\scriptsize $$({\rm Seq})_n=(\cdots, E_{a(k_1)}=E_n, E_{a(k_1+1)}, \cdots, E_{a(k_2)}=E_n, E_{a(k_2+1)}, \cdots, 
E_{a(k_{\nu})}=E_n, E_{a(k_{\nu}+1)}, 
\cdots).$$} 
By the construction, we see $E_n \cap E_{a(k+1)}\neq\emptyset$ for every $k\in K$. 
Therefore, we obtain 
$$E(k_1) \cdot E_n \geq -1,\,\, E(k_1+1)\cdot E_n=(E(k_1)+E_{a(k_1+1)})\cdot E_n \geq -1+E_{a(k_1+1)}\cdot E_n \geq 0$$
$$E(k_2) \cdot E_n \geq -1,\,\, E(k_2+1)\cdot E_n=(E(k_2)+E_{a(k_2+1)})\cdot E_n \geq -1+E_{a(k_2+1)}\cdot E_n \geq 0$$
$$\cdots.$$
Thus we see $E(r) \cdot E_n \geq -1$ for every $1\leq r \leq R$. 

We consider the corresponding sequences $({\rm Seq})_{n-1}^{(n-1)}$ and $({\rm SEQ})^{(n-1)}$ 
on $X^{(n-1)}$, that is, 
\begin{eqnarray*}
({\rm Seq})^{(n-1)}_n&:=&(E_{a(1)}^{(n-1)}, E_{a(2)}^{(n-1)}, E_{a(3)}^{(n-1)},\cdots, E_{a(R)}^{(n-1)})\\
({\rm SEQ})^{(n-1)}&:=&(E_{a(1)}^{(n-1)}, E_{a(1)}^{(n-1)}+E_{a(2)}^{(n-1)}, E_{a(1)}^{(n-1)}+E_{a(2)}^{(n-1)}+E_{a(3)}^{(n-1)},\cdots)\\
&=:&(E^{(n-1)}(1), E^{(n-1)}(2), E^{(n-1)}(3), \cdots, E^{(n-1)}(R)), 
\end{eqnarray*}
where we set $E_{n}^{(n-1)}:=0$. 
By the induction hypothesis, 
we obtain 
$$E^{(n-1)}(r)\cdot E_i^{(n-1)} \geq -1$$
for every $1\leq r\leq R$ and $1\leq i \leq n$. 
By our construction, we see 
$$(f^{(n)})^*(E^{(n-1)}(r))=E(r)$$
for every $r \in \{1, \cdots, R\}\setminus K$. 
Therefore, for $r \in \{1, \cdots, R\}\setminus K$ and $1\leq i \leq n$, 
we obtain 
$$E(r) \cdot E_i=(f^{(n)})^*(E^{(n-1)}(r))\cdot E_i=E^{(n-1)}(r)\cdot E_i^{(n-1)} \geq -1.$$
Thus, it suffices to show that 
$$E(r) \cdot E_i \geq -1$$
for $r \in K$ and $1\leq i \leq n-1$. 
This follows from 
$$E(r) \cdot E_i =(E(r-1)+E_{a(r)}) \cdot E_i=(E(r-1)+E_{n}) \cdot E_i \geq E(r-1)\cdot E_i \geq -1.$$
We are done. 
\end{step}

\begin{step}\label{step-construct-D}
In this step we construct a sequence of \Z-divisors 
$$0=:D(0)\leq D(1)\leq D(2)\leq\cdots\leq D(S-1)\leq D(S):=m_1f^*E_Y$$
such that 
\begin{enumerate}
\item[(a)]{For every $0\leq s\leq S-1$, $D(s+1)-D(s)=E_j$ for some $j$. }
\item[(b)]{$D(s)+A_X+m_1f^*A_Y$ is nef for every $0\leq s\leq S$. }
\end{enumerate}

We define the sequence $\{D(s)\}_{s=0}^S$ by 
\begin{eqnarray*}
&&E(0),\\
&&E(1),E(2),\cdots, E(R), \\
&&E(R)+E(1),E(R)+E(2),\cdots, 2E(R),\\
&&2E(R)+E(1),2E(R)+E(2),\cdots, 3E(R),\\
&&\cdots\\
&&(m_1-1)E(R)+E(1),(m_1l-1)E(R)+E(2),\cdots, m_1E(R).
\end{eqnarray*}
Then, the sequence $\{D(s)\}_{s=0}^S$ satisfies (a). 
We now show (b). 
For every $0\leq s\leq S$, we can write 
$$D(s)+A_X+m_1f^*A_Y=E(r)+tf^*E_Y+A_X+m_1f^*A_Y.$$
for some $0\leq r\leq R$ and some $0\leq t\leq m_1-1.$ 
To show that this divisor is nef, 
it is sufficient to show that 
$$(E(r)+tf^*E_Y+A_X+m_1f^*A_Y)\cdot E_j\geq 0$$
for every $1\leq j\leq n.$ 
By Step~\ref{step-construct-E}, 
for every $2\leq j\leq n$, we obtain
$$(E(r)+tf^*E_Y+A_X+m_1f^*A_Y)\cdot E_j=(E(r)+A_X)\cdot E_j\geq 0.$$
On the other hand, when $j=1$, 
we have 
\begin{eqnarray*}
(E(r)+tf^*E_Y+A_X+m_1f^*A_Y)\cdot E_1
&\geq & (tf^*E_Y+m_1f^*A_Y)\cdot E_1\\
&=&(tE_Y+m_1A_Y)\cdot E_Y\\
&\geq &(m_1E_Y+m_1A_Y)\cdot E_Y\\
&=&0.
\end{eqnarray*}
\end{step}

\begin{step}\label{step-trace}
For a \Z-divisor $D$ on $X$ and for a curve $E\simeq\mathbb P^1$ in $X$, 
by Lemma~\ref{trace-diagram}, 
we obtain the following diagram:  
{\tiny $$\begin{CD}
0@>>> H^0(X, \omega_X(pD))@>>> H^0(X, \omega_X(E+pD))@>>> H^0(E, \omega_E(pD))@>>> 
H^1(X, \omega_X(pD))\\
@. @VV\alpha:=\Tr_X(D) V @VV\beta:=\Tr_{X, E}(D) V @VV\gamma:=\Tr_E(D) V\\
0@>>>H^0(X, \omega_X(D))@>>> H^0(X, \omega_X(E+D))@>>> H^0(E, \omega_E(D))
\end{CD}$$}
where the horizontal sequences are exact and 
the vertical arrows are the trace maps. 
Then the following assertions hold: 
\begin{enumerate}
\item{$\gamma$ is surjective.}
\item{If $H^1(X, \omega_X(pD))=0$ and $\alpha$ is surjective, then also $\beta$ is surjective. }
\item{If $\beta$ is surjective, then also the trace map 
$$\Tr_X(E+D):H^0(X, \omega_X(p(E+D)))\to H^0(X, \omega_X(E+D))$$
is surjective.}
\end{enumerate}
The assertion in (1) holds because $E\simeq \mathbb P^1$ is $F$-split (Proposition~\ref{F-surje}). 
We deduce (2) from the snake lemma. 
The assertion in (3) follows from Remark~\ref{pE-vs-E}. 
\end{step}

\begin{step}\label{step-final}
Let $m_2\in \mathbb Z_{>0}$ such that 
$$H^1(X, \omega_X(m_2h^*A_Z+N_X))=0$$
for every nef \Z-divisor $N_X$ on $X$. 
Note that, since $h^*A_Z$ is nef and big, 
we can find such an integer $m_2$ by \cite[Theorem~2.6]{T1}. 
Let $m_0:=m_1+m_2$ and 
fix a nef \Z-divisor $N_Z$ on $Z$. 

We would like to apply the diagram in Step~\ref{step-trace} for 
$$D=D(s)+A_X+m_1f^*A_Y+m_2h^*A_Z+h^*N_Z,\,\,\,\,E=D(s+1)-D(s)$$
where $0\leq s\leq S-1$. 
Note that, by Step~\ref{step-construct-D}, this divisor $D$ is nef. 
By Step~\ref{step-s=0}, 
$\alpha=\Tr_X(D)$ in Step~\ref{step-trace} is surjective for 
$$D=D(0)+A_X+m_1f^*A_Y+m_2h^*A_Z+h^*N_Z.$$ 
We have 
$$H^1(X, \omega_X(p(D(s)+A_X+m_1f^*A_Y+m_2h^*A_Z+h^*N_Z)))=0$$
by the choice of $m_2$. 
Therefore, 
by Step~\ref{step-construct-D} and Step~\ref{step-trace}, 
we obtain the surjection 
$$\Tr_X(D):H^0(X, \omega_X(pD))\to H^0(X, \omega_X(D))$$
for 
\begin{eqnarray*}
D&=&D(S)+A_X+m_1f^*A_Y+m_2h^*A_Z+h^*N_Z\\
&=&m_1f^*E_Y+A_X+m_1f^*A_Y+m_2h^*A_Z+h^*N_Z\\
&=&A_X+m_1h^*A_Z+m_2h^*A_Z+h^*N_Z\\
&=&A_X+(m_1+m_2)h^*A_Z+h^*N_Z\\
&=&A_X+h^*(m_0A_Z+N_Z).
\end{eqnarray*}
Thus, the assertion in the proposition follows from 
$$H^1(X, \omega_X(p(A_X+h^*(m_0A_Z+N_Z))))=0.$$
\end{step}
\end{proof}

\begin{prop}\label{birat-surje}
Let $h:X\to Z$ be a birational morphism between 
smooth projective surfaces. 
Let $A_X$ be an ample \Z-divisor on $X$ and 
let $A_Z$ be an ample \Z-divisor on $Z$. 
Then, there exists $m_1\in \mathbb Z_{>0}$ such that 
the trace map $\Tr^e_X(A_X+h^*(m_1A_Z+N_Z))$
{\Small 
$$H^0(X, \omega_X(p^e(A_X+h^*(m_1A_Z+N_Z))))\to H^0(X, \omega_X(A_X+h^*(m_1A_Z+N_Z)))$$}
is surjective for every $e\in\mathbb Z_{>0}$ and for every nef \Z-divisor $N_Z$ on $Z$.
\end{prop}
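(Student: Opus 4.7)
The plan is to decompose the iterated trace via Lemma~\ref{e-vs-e+1} into single Frobenius steps, reduce surjectivity on global sections to a family of cohomology vanishings for $B_X^2(p^d L)$, and then establish those vanishings by combining Proposition~\ref{birat-B-vanish} with Fujita vanishing.

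Set $L := A_X + h^*(m_1 A_Z + N_Z)$. By Lemma~\ref{e-vs-e+1} the trace map $\Tr_X^e(L)$ factors as the chain
$$F_*^e\omega_X(p^e L) \xrightarrow{F_*^{e-1}\Tr_X^1(p^{e-1}L)} F_*^{e-1}\omega_X(p^{e-1} L) \longrightarrow \cdots \longrightarrow F_*\omega_X(pL) \xrightarrow{\Tr_X^1(L)} \omega_X(L),$$
and since $H^0(X, F_*^d G) = H^0(X, G)$ for every quasi-coherent $G$, it suffices to show that each single-step trace $\Tr_X^1(p^d L)$ is surjective on $H^0$, uniformly in $d \ge 0$ and in the nef divisor $N_Z$. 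By Remark~\ref{cartier-remark} and Remark~\ref{cartier-trace}, twisting $0 \to B_X^2 \to F_*\omega_X \xrightarrow{\Tr_X^1} \omega_X \to 0$ by $\mathcal{O}_X(p^d L)$ (and using the projection formula for Frobenius on the middle term) yields
$$0 \to B_X^2(p^d L) \to F_*\omega_X(p^{d+1} L) \xrightarrow{\Tr_X^1(p^d L)} \omega_X(p^d L) \to 0,$$
so the required surjectivity is equivalent to $H^1(X, B_X^2(p^d L)) = 0$ for every $d \ge 0$ and every nef $N_Z$ on $Z$.

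To obtain this vanishing uniformly in $d$, I would split the range of $d$ into two parts. For the large-$d$ part, I apply Fujita vanishing (\cite[Theorem~(1)]{Fujita1}, \cite[Section~5]{Fujita2}) to the coherent sheaf $B_X^2$ with ample divisor $A_X$: there exists $m^F \in \mathbb{Z}_{>0}$ such that $H^1(X, B_X^2(m A_X + N)) = 0$ for every $m \ge m^F$ and every nef divisor $N$ on $X$. Writing $p^d L = p^d A_X + h^*(p^d m_1 A_Z + p^d N_Z)$ and noting that $h^*$ of a nef divisor is nef, the desired vanishing is automatic whenever $p^d \ge m^F$.

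For the remaining finitely many values $d$ with $p^d < m^F$, I apply Proposition~\ref{birat-B-vanish} with the ample divisor $p^d A_X$ in place of $A_X$ (and $A_Z$ unchanged); this yields a constant $m_0^{(d)} \in \mathbb{Z}_{>0}$ such that $H^1(X, B_X^2(p^d A_X + h^*(m_0^{(d)} A_Z + N'))) = 0$ for every nef $N'$ on $Z$. Taking $N' := (p^d m_1 - m_0^{(d)}) A_Z + p^d N_Z$, which is nef as soon as $p^d m_1 \ge m_0^{(d)}$, covers $p^d L$. Choosing
$$m_1 := \max_{d \,:\, p^d < m^F} \left\lceil \frac{m_0^{(d)}}{p^d} \right\rceil,$$
a maximum over a finite set, produces a single constant that works for every $e$ and every nef $N_Z$. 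The main obstacle is producing one $m_1$ valid for all $e \ge 1$ simultaneously; this is exactly what the dichotomy achieves, with Fujita vanishing disposing of all sufficiently large $d$ in one stroke and Proposition~\ref{birat-B-vanish} mopping up the finite residue of low exponents.
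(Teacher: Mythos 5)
Your proof is correct and follows the same route as the paper: decompose $\Tr_X^e(L)$ into single Frobenius steps via Lemma~\ref{e-vs-e+1}, reduce surjectivity on $H^0$ to the vanishings $H^1(X, B_X^2(p^dL))=0$ via the exact sequence $0\to B_X^2\to F_*\omega_X\to\omega_X\to 0$, and invoke Proposition~\ref{birat-B-vanish}. The paper's own proof is only two lines and does not address the fact that $p^dL=p^dA_X+h^*(p^dm_1A_Z+p^dN_Z)$ is not literally of the form $A_X+h^*(m_1A_Z+N_Z)$ to which Proposition~\ref{birat-B-vanish} applies; your dichotomy --- Fujita vanishing disposing of all $d$ with $p^d\geq m^F$ uniformly, and finitely many applications of Proposition~\ref{birat-B-vanish} (with $p^dA_X$ as the ample divisor) handling the remaining low exponents, with $m_1$ taken as a maximum over that finite set --- is a clean and correct way to supply the uniformity in $d$ that the paper leaves implicit.
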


\begin{proof}
For $m\in\mathbb Z_{>0}$ and a nef \Z-divisor $N_Z$ on $Z$, 
set 
$$D(m, N_Z):=A_X+h^*(mA_Z+N_Z).$$ 
By Lemma~\ref{e-vs-e+1}, we obtain  
$$\Tr_{X}^{d+1}(D(m, N_Z))=\Tr_{X}^d(D(m, N_Z))\circ F_*^d(\Tr_{X}(p^d D(m, N_Z))).$$
Thus, it suffices to find $m_1\in\mathbb Z_{>0}$ 
such that $\Tr_{X}(p^d D(m_1, N_Z))$ is surjective 
for every $d\in \mathbb Z_{\geq 0}$ and nef \Z-divisor $N_Z$ on $Z$. 
By the Fujita vanishing theorem, 
we can find $d_0 \in\mathbb Z_{>0}$ such that 
$\Tr_{X}(p^d D(m_1, N_Z))$ is surjective 
for every $d>d_0$, $m_1\in\mathbb Z_{>0}$ and nef \Z-divisor $N_Z$ on $Z$. 
By Proposition~\ref{birat-B-vanish}, 
for every $0\leq i \leq d_0$, 
we can find $n_i \in \mathbb Z_{>0}$ such that 
$\Tr_{X}(p^iD(m, N_Z))$ is surjective for every $m \geq n_i$ and nef \Z-divisor $N_Z$ on $Z$. 
Therefore, for $m_1:=\max_{1\leq i\leq d_0}\{n_i\}$, 
the trace map $\Tr_{X}(p^dD(m_1, N_Z))$ is surjective 
for every $d\in\mathbb Z_{\geq 0}$ and nef \Z-divisor $N_Z$ on $Z$. 
\end{proof}

Let us prove the main theorem in this section. 

\begin{thm}\label{trace-surface-2}
Let $X$ be a smooth projective surface. 
Let $A$ be an ample \Z-divisor on $X$ such that 
$K_X+A$ is nef and big. 
Then there exists $m_1\in\mathbb Z_{>0}$ such that 
the trace map $\Tr_X^e(A+m(K_X+A))$
$$H^0(X, \omega_X(p^e(A+m(K_X+A))))\to H^0(X, \omega_X(A+m(K_X+A)))$$
is surjective for every $m\geq m_1$ and for every $e\in\mathbb Z_{>0}$. 
\end{thm}

\begin{proof}
By Proposition~\ref{birat-surje}, 
it is sufficient to prove that 
there exists a birational morphism 
$$h:X\to Z$$
to a smooth projective surface $Z$ such that 
$K_X+A$ is the pull-back of an ample \Z-divisor on $Z$. 
If $K_X+A$ is ample, then there is nothing to show. 
We may assume that $K_X+A$ is not ample. 
Then, by the Nakai--Moishezon criterion, 
we can find a curve $E$ such that $(K_X+A)\cdot E=0$. 
This implies $K_X\cdot E<0$. 
Moreover, since $K_X+A$ is big, the equation $(K_X+A)\cdot E=0$ 
implies $E^2<0$. 
Therefore, $E$ is a $(-1)$-curve. 
Let $g:X \to Y$ be the contraction of $E$. 
Set $A_Y:=g_*A$. 
Then, we see that $A_Y$ is ample and $K_X+A=g^*(K_Y+A_Y)$.  
We can apply the same argument to $Y$ and $A_Y$, that is, 
$K_Y+A_Y$ is ample or we can find a $(-1)$-curve $E_Y$ on $Y$ with $(K_Y+A_Y)\cdot E_Y=0$. 
Since $\rho(Y)=\rho(X)-1$, this procedure terminates. 
Thus, we obtain a birational morphism $h:X\to Z$ to 
a smooth projective surface such that $K_X+A=h^*(K_Z+h_*A)$ where $K_Z+h_*A$ is ample. 
\end{proof}

\section{Main theorem for threefolds}

In this section, we prove the main theorem for threefolds. 
Let us summarize the results on the trace map obtained in the previous sections. 

\begin{thm}\label{trace-surface}
Let $X$ be a smooth projective surface. 
Let $A$ be an ample \Z-divisor on $X$ such that $K_X+A$ is nef and $\kappa(X, K_X+A)\neq 0$. 
Then there exists $m_1\in\mathbb Z_{>0}$ such that 
the trace map 
$$\Tr_X^e(A+m(K_X+A)):$$
$$H^0(X, K_X+p^e(A+m(K_X+A)))\to H^0(X, K_X+(A+m(K_X+A)))$$
is surjective for every $m\geq m_1$ and for every $e\in\mathbb Z_{>0}$. 
\end{thm}

\begin{proof}
If $\kappa(X, K_X+A)=-\infty$, then there is nothing to show. 
Thus, we may assume $\kappa(X, K_X+A)\geq 1$. 
Then, the assertion follows from 
Theorem~\ref{trace-surface-1} and Theorem~\ref{trace-surface-2}
\end{proof}

\begin{rem}
In the above situation, 
one can show $\kappa(X, K_X+A)\neq -\infty$ using the abundance theorem obtained in \cite[Theorem~1.4]{Fujita3}. 
Indeed, by Bertini's theorem, 
we can find an effective \Q-divisor $D$ such that $\llcorner D\lrcorner=0$ and $A\sim_{\mathbb Q}D.$
\end{rem}

Let us prove the main theorem. 

\begin{thm}\label{threefold-extension}
Let $X$ be a smooth projective threefold. 
Let $S$ be a smooth prime divisor on $X$ and 
let $A$ be an ample \Z-divisor on $X$ such that 
\begin{enumerate}
\item{$K_X+S+A$ is nef, and }
\item{$\kappa(S, K_S+A|_S)\neq 0.$}
\end{enumerate}
Then there exists $m_0\in \mathbb Z_{>0}$ such that, for every integer $m\geq m_0$, 
the natural restriction map 
$$H^0(X, m(K_X+S+A))\to H^0(S, m(K_S+A|_S))$$
is surjective.
\end{thm}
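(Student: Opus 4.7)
The plan is the strategy sketched in the introduction: reduce surjectivity of the restriction map to surjectivity of a Frobenius trace map on the surface $S$, then invoke Theorem~\ref{trace-surface}. The real content is choosing the right adjoint divisor on $X$ so that its restriction to $S$ has exactly the shape required by that theorem.

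First I would set $L_m := A + (m-1)(K_X+S+A)$ for each integer $m \geq 1$. Since $A$ is ample and $K_X+S+A$ is nef, $L_m$ is ample on $X$. By adjunction $K_S = (K_X+S)|_S$,
$$m(K_X+S+A) = K_X + S + L_m, \qquad m(K_S+A) = K_S + L_m|_S,$$
and, the key identity,
$$L_m|_S \;=\; A|_S + (m-1)(K_S + A|_S).$$
Now apply Theorem~\ref{trace-surface} to the smooth projective surface $S$ with the ample Cartier divisor $A|_S$: its hypotheses hold, since $K_S + A|_S = (K_X+S+A)|_S$ is a restriction of a nef divisor to a subvariety and hence nef, and $\kappa(S, K_S+A|_S) \neq 0$ by assumption. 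This yields an integer $m_1 > 0$ such that for every $m \geq m_1 + 1$ and every $e \in \mathbb{Z}_{>0}$, the trace map
$$\Tr_S^{e}(L_m|_S)\colon H^0(S, \omega_S(p^e L_m|_S)) \to H^0(S, \omega_S(L_m|_S))$$
is surjective. Set $m_0 := m_1 + 1$.

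Fix $m \geq m_0$. Since $L_m$ is ample, Serre vanishing supplies some $e \in \mathbb{Z}_{>0}$ with $H^1(X, K_X + p^e L_m) = 0$. Lemma~\ref{trace-diagram}(1) applied to $(X, S)$, twisted by $\mathcal{O}_X(L_m)$ via the trace maps $\Tr_X^{e}(L_m)$, $\Tr_{X,S}^{e}(L_m)$, $\Tr_S^{e}(L_m|_S)$ of Proposition~\ref{tracedef}, gives, after taking global sections and concatenating with the long exact cohomology sequence, the commutative diagram
\begin{equation*}
\begin{CD}
H^0(X, K_X+S+p^{e}L_m) @>\alpha>> H^0(S, K_S+p^{e}L_m|_S) @>>> H^1(X, K_X+p^{e}L_m)\\
@VVV @VV\Tr_S^{e}(L_m|_S)V \\
H^0(X, K_X+S+L_m) @>\rho>> H^0(S, K_S+L_m|_S)
\end{CD}
\end{equation*}
with exact top row. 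The vanishing $H^1(X, K_X+p^eL_m) = 0$ makes $\alpha$ surjective, and the previous paragraph makes $\Tr_S^{e}(L_m|_S)$ surjective; a routine diagram chase then shows that $\rho$ is surjective. The identifications from the first paragraph identify $\rho$ with the restriction map $H^0(X, m(K_X+S+A)) \to H^0(S, m(K_S+A))$, completing the proof.

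The main obstacle is really just the setup, namely observing that $L_m|_S$ takes the form $A|_S + (m-1)(K_S + A|_S)$, so that Theorem~\ref{trace-surface} applies uniformly in $m$. Once that is in place, the argument is a packaged use of Serre vanishing and the Frobenius-trace machinery built in Sections~2 and~4--6, with no further technical input needed.
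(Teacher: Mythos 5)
Your proposal is correct and follows essentially the same route as the paper: the paper sets $L:=K_X+S+A$, applies Theorem~\ref{trace-surface} to $\Tr_S^e(A+mL)$ using $(A+mL)|_S=A|_S+m(K_S+A|_S)$, and concludes via Serre vanishing and the commutative diagram of Lemma~\ref{trace-diagram}, which is exactly your argument up to a shift of index from $m$ to $m-1$.
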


\begin{proof}
Let $L:=K_X+S+A$. 
By Lemma~\ref{trace-diagram}, 
we obtain the following commutative diagram 
{\Small 
$$\begin{CD}
H^0(X, \omega_X(S+p^eA+p^emL))@>>> H^0(S, \omega_S(p^eA|_S+mp^eL|_S))@>>> 
H^1(X, \omega_X(p^eA+p^emL))\\
@VV\Tr_{X,S}^e(A+mL) V @VV\Tr_S^e(A|_S+mL|_S) V\\
H^0(X, \omega_X(S+A+mL))@>>> H^0(S, \omega_S(A|_S+mL|_S)).
\end{CD}$$}
By (2) and Theorem~\ref{trace-surface}, 
we can find $m_0\in\mathbb Z_{>0}$ such that 
the trace map $\Tr_S^e(A|_S+mL|_S)$
$$H^0(S, K_S+p^eA+p^emL)\to H^0(S, K_S+A|_S+mL|_S)$$
is surjective for every $m\geq m_0$ and $e\in\mathbb Z_{>0}$. 
Fix an integer $m\geq m_0$. 
By the Serre vanishing theorem, we have 
$$H^1(X, \omega_X(p^eA+p^emL))=0$$
for $e\gg 0$. 
Therefore, the natural restriction map 
\begin{eqnarray*}
&&H^0(X, (m+1)(K_X+S+A))\\
&=&H^0(X, \omega_X(S+A+mL))\\
&\to& H^0(S, \omega_S(A|_S+mL|_S))\\
&=&H^0(S, (m+1)(K_S+A|_S))
\end{eqnarray*}
is surjective.
\end{proof}

\section{Calculation for the case $\kappa=0$}

In this section, we consider whether Theorem~\ref{trace-surface} 
holds for the case $\kappa(X, K_X+A)=0$. 
Let $X$ be a smooth projective surface and let $A$ be an ample \Z-divisor on $X$. 
Assume that $K_X+A$ is nef and $\kappa(X, K_X+A)=0$. 
By the abundance theorem (\cite[The second theorem in Introduction]{Fujita3}), 
we see $K_X+A\sim_{\mathbb Q} 0$. 
Then, $-K_X$ is ample. 
In particular, $X$ is a rational surface. 
In this case ${\rm Pic}(X)$ has no torsion, hence $K_X+A\sim 0$. 
We consider the following question. 

\begin{ques}\label{ques-del-Pezzo}
Let $X$ be a smooth projective surface such that 
$-K_X$ is ample. 
Is the trace map 
\begin{eqnarray*}
\Tr_X^e(-K_X):H^0(X, \omega_X(-p^eK_X))\to H^0(X, \omega_X(-K_X))
\end{eqnarray*}
surjective?
\end{ques}

If $K_X^2\geq 4$, then we obtain an affirmative answer.

\begin{prop}
Let $X$ be a smooth projective surface such that $-K_X$ is ample. 
If $K_X^2\geq 4$, then the trace map 
$$\Tr_X^e(-K_X):H^0(X, \omega_X(-p^eK_X))\to H^0(X, \omega_X(-K_X))$$
is surjective for every $e\in \mathbb Z_{>0}$. 
\end{prop}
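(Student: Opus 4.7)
The plan is to reduce to Proposition~\ref{F-split-surje}: if one can show $X$ is $F$-split under the hypothesis $K_X^2\geq 4$, then the required surjectivity of the trace map $\Tr_X^e(-K_X)$ on global sections is immediate for every $e$. The entire difficulty therefore becomes: establish $F$-splitness of del Pezzo surfaces of degree $\geq 4$.

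My strategy is to produce a smooth \emph{ordinary} elliptic anticanonical divisor and lift its $F$-splitting to $X$. Since $-K_X$ has self-intersection $\geq 4$ and is ample, it is very ample (by the classical theory of del Pezzo surfaces), so $|-K_X|$ embeds $X$ in projective space and the Bertini theorem in characteristic $p$ applies to produce a smooth member $C\in |-K_X|$. Adjunction gives $\omega_C\cong (\omega_X\otimes \mathcal O_X(C))|_C\cong \mathcal O_C$, so $C$ is a smooth elliptic curve. The linear system $|-K_X|$ has dimension $K_X^2\geq 4$; since the supersingular locus in the moduli of elliptic curves is finite, one can argue that the induced moduli map from the smooth locus of $|-K_X|$ to $\overline{\mathcal M_{1,1}}$ is non-constant on some component, so a general $C$ is ordinary, i.e. the Cartier operator is an isomorphism on $H^0(C,\omega_C)$.

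Given such an ordinary elliptic $C$, I would then show $X$ is $F$-split. One way is to exploit the commutative diagram of Lemma~\ref{trace-diagram}(2) with $S=C$: identifying $\Tr_X^e(C)=\Tr_X^e(-K_X)$ via Lemma~\ref{D-linearequ}, the bottom row of the diagram on global sections reads
$$0\to H^0(\omega_X)\to H^0(\omega_X(C))\to H^0(\omega_C)\to H^1(\omega_X),$$
and the outer terms vanish because $X$ is rational, so $H^0(\omega_X(C))\xrightarrow{\sim}H^0(\omega_C)\cong k$. The top row, after using $H^i(\omega_X)=0$ for $i=0,1$, gives $H^0(\omega_X(p^eC))\xrightarrow{\sim}H^0(\omega_{p^eC})$. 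The problem therefore collapses to showing that $\Psi\colon H^0(p^eC,\omega_{p^eC})\to H^0(C,\omega_C)$ is non-zero, and since $\Tr_C^e\colon H^0(F_*^e\omega_C)\to H^0(\omega_C)$ factors as $H^0(F_*^e\omega_C)\to H^0(\omega_{p^eC})\xrightarrow{\Psi}H^0(\omega_C)$, it suffices to show $\Tr_C^e$ is non-zero on $H^0(\omega_C)$. This is exactly the ordinary hypothesis on $C$. Surjectivity of $\Tr_X^e(-K_X)$ then follows because its target $H^0(\omega_X(-K_X))=H^0(\mathcal O_X)$ is one-dimensional.

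The main obstacle, and the place where the numerical hypothesis $K_X^2\geq 4$ is really used, is the step of producing a smooth anticanonical divisor that is in addition ordinary: in low degree ($d=1,2,3$) the anticanonical system can have base points, Bertini in characteristic $p$ can fail, or the family of smooth anticanonical divisors can conceivably be isotrivially supersingular, whereas $d\geq 4$ gives enough room (very ampleness, a $4$-dimensional family of curves, and non-isotrivial $j$-invariant) to force both smoothness and ordinariness of a general member. Once this geometric input is secured the remaining cohomological chase via Lemma~\ref{trace-diagram}(2) is formal and reduces everything to the one-dimensional statement already verified in Theorem~\ref{curve-trace-nonzero}/Corollary~\ref{curve-trace-cor} in the guise of the ordinary Cartier operator on $H^0(\omega_C)$.
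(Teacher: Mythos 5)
Your reduction is a genuinely different route from the paper's, and the cohomological part of it is sound: since $H^0(\omega_X)=H^1(\omega_X)=0$ on a rational surface, taking global sections in Lemma~\ref{trace-diagram}(2) with $S=C\in|-K_X|$ does identify $\Tr_X^e(-K_X)$ with the map $\Psi\colon H^0(\omega_{p^eC})\to H^0(\omega_C)$, and $\Psi\neq 0$ follows from non-vanishing of $\Tr_C^e$ on $H^0(\omega_C)$, i.e.\ from ordinarity of the elliptic curve $C$; as the target $H^0(\omega_X(-K_X))\cong k$ is one-dimensional, non-vanishing gives surjectivity. (For $e=1$ this is exactly the Mehta--Ramanathan-type criterion that an ordinary anticanonical divisor forces $F$-splitting.)

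The genuine gap is the geometric input you yourself flag as the main obstacle: the existence of a smooth \emph{ordinary} member of $|-K_X|$. The assertion that ``the supersingular locus is finite, so the $j$-map on the smooth locus of $|-K_X|$ is non-constant on some component'' is not an argument: non-isotriviality of the family of cubics through the $9-K_X^2\le 5$ blown-up points needs a proof, and in characteristic $2$ and $3$ (where $j=0$ is the only supersingular value) this is precisely the delicate point. That claim is essentially the content of Hara's Example~5.5 and Smith's Proposition~4.10, which the paper explicitly declines to rely on for lack of an explicit proof in the literature --- that is the whole reason the paper gives a different argument. The paper's proof avoids elliptic curves altogether: realizing $X$ as a blowup of $\mathbb P^2$ at $r\le 5$ points, it writes $-K_X\sim C+L$ with $C$ the proper transform of a smooth conic through all $r$ points (so $C\cong\mathbb P^1$) and $L$ the proper transform of a line missing them ($L$ nef and big, $\deg(L|_C)=2$, so $H^0(C,\omega_C(L|_C))\neq 0$), and then applies Corollary~\ref{nonzero-criterion}; the curve-level trace non-vanishing is then automatic because $\mathbb P^1$ is $F$-split, with no ordinarity hypothesis to verify. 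To salvage your route you would have to either prove non-constancy of the $j$-invariant on $|-K_X|$ (say, by producing a pencil containing both a smooth and a nodal member and analyzing the resulting elliptic fibration) or invoke a complete published proof of $F$-splitting for del Pezzo surfaces of degree $\ge 4$.
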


\begin{proof}
Since $h^0(X, \omega_X(-K_X))=1$, 
it is sufficient to show that   
the trace map 
$$\Tr_X^e(-K_X):H^0(X, \omega_X(-p^eK_X))\to H^0(X, \omega_X(-K_X))$$
is a nonzero map. 
Since $K_X^2\geq 4$, $X$ is obtained by blowing up $\mathbb P^2$ at  $\leq 5$ points. 
Therefore, we can find a smooth conic $C_0$ passing through these points. 
Let $L_0$ be a line which does not pass through these points. 
Let $C$ and $L$ be the proper transforms of $C_0$ and $L_0$, respectively. 
We see that $L|_C$ is ample, $H^0(C, \omega_C(L|_C))\neq 0$ and $L$ is nef and big. 
Since $C+L\in |-K_X|$, 
we can apply Corollary~\ref{nonzero-criterion}.
\end{proof}

If $X$ is $F$-split, 
then the trace map $\Tr_X^e(-K_X)$ in Question~\ref{ques-del-Pezzo} is surjective. 
Note that, by \cite[Example~5.5]{Hara} and \cite[Proposition~4.10]{Smith}, if $K_X^2\geq 4$, then $X$ is $F$-split. 
However, since \cite{Hara} contains no explicit proof, we decided to include the above proof. 
Moreover, \cite[Example~5.5]{Hara} and \cite[Proposition~4.10]{Smith} show that, if $K_X^2=3$ and $X$ is not $F$-split, then 
$X$ is a Fermat type cubic surface in characteristic two. 
Indeed, this example gives a negative answer to Question~\ref{ques-del-Pezzo} as follows. 

\begin{thm}\label{trace-cex}
Let ${\rm char}\,k=p=2$. 
Consider $\mathbb P^3$ and let $[x:y:z:w]$ be the homogeneous coordinates. 
Let 
$$X:=\{[x:y:z:w]\in \mathbb P^3\,|\,x^3+y^3+z^3+w^3=0\}.$$
Then the trace map 
$$\Tr_X^e(-K_X):H^0(X, \omega_X(-2^eK_X))\to H^0(X, \omega_X(-K_X))$$
is the zero map for every $e\in\mathbb Z_{>0}$.
\end{thm}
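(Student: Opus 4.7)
The plan is to translate the computation of $\Tr_X^e(-K_X)$ into an explicit Cartier-operator calculation on $\mathbb P^3$. By adjunction, $\omega_X\cong\mathcal O_X(-1)$ for the smooth cubic $X\subset\mathbb P^3$, so $-K_X=H|_X$ where $H$ is the hyperplane class, $\omega_X(-K_X)\cong\mathcal O_X$, and $H^0(X,\omega_X(-K_X))=k$. Applying Lemma~\ref{trace-diagram}(1) with ambient variety $\mathbb P^3$ and prime divisor $X$, tensoring by $\mathcal O_{\mathbb P^3}(H)$ (pulling the twist through $F^e_*$ by the projection formula) and taking global cohomology, one obtains the commutative square
$$\begin{CD}
H^0(\mathbb P^3,\mathcal O((p^e-1)H)) @>>> H^0(X,\omega_X(-p^eK_X))\\
@VV\Tr_{\mathbb P^3,X}^e(H)V @VV\Tr_X^e(-K_X)V\\
H^0(\mathbb P^3,\mathcal O_{\mathbb P^3}) @>\sim>> H^0(X,\mathcal O_X),
\end{CD}$$
in which the top arrow is surjective and the bottom arrow is an isomorphism (both because $H^1(\mathbb P^3,\mathcal O(d))=0$ for all $d$). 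It therefore suffices to show that the left vertical map $\Tr_{\mathbb P^3,X}^e(H)$ is zero.

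Next, by Remark~\ref{pE-vs-E} the map $\Tr_{\mathbb P^3,X}^e(H)$ factors as multiplication by $f^{p^e-1}$ (where $f=X_0^3+X_1^3+X_2^3+X_3^3$ is the defining equation of $X$) followed by $\Tr_{\mathbb P^3}^e(X+H)\colon H^0(\mathbb P^3,\mathcal O((4p^e-4)H))\to k$. A direct local computation using Remark~\ref{trace-local} on the affine chart $\{X_3\ne 0\}$ identifies this latter map, applied to a homogeneous polynomial $Q$ of degree $4(p^e-1)$, with (the $p^e$-th root of) the coefficient of the diagonal monomial $X_0^{p^e-1}X_1^{p^e-1}X_2^{p^e-1}X_3^{p^e-1}$: a degree count shows that this is the only monomial of degree $4(p^e-1)$ all four of whose exponents are at most $p^e-1$, and all other monomials of that degree contribute either $0$ or non-constant local terms that are incompatible with lying in $H^0(\mathbb P^3,\mathcal O)$.

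The final step is to show that for every polynomial $P$ of degree $p^e-1$ the coefficient of $(X_0X_1X_2X_3)^{p^e-1}$ in $Pf^{p^e-1}$ vanishes. Here the characteristic-two hypothesis is decisive: Lucas' theorem gives
$$f^{2^e-1}=\sum_{(k_0,k_1,k_2,k_3)}X_0^{3k_0}X_1^{3k_1}X_2^{3k_2}X_3^{3k_3},$$
where $(k_0,k_1,k_2,k_3)$ ranges over distributions of the $e$ binary digits of $2^e-1$ among the four indices. In any such distribution the index $j$ receiving the top bit $2^{e-1}$ has $k_j\ge 2^{e-1}$, hence $3k_j\ge 3\cdot 2^{e-1}>2^e-1$. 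Therefore every monomial of $f^{p^e-1}$ contains some variable with exponent strictly greater than $p^e-1$, and multiplying by any monomial of $P$ (whose individual exponents are at most $p^e-1$) cannot lower that exponent; so no monomial of $Pf^{p^e-1}$ has all four exponents equal to $p^e-1$, the required coefficient vanishes, and $\Tr_X^e(-K_X)=0$. The main obstacle will be the careful bookkeeping of degrees and twists needed to derive the commutative square above and to identify $\Tr_{\mathbb P^3}^e(X+H)$ on global sections with diagonal-coefficient extraction; once this reduction is in hand, the final vanishing is an elementary characteristic-two bit-counting observation.
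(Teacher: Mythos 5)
Your proof is correct and follows essentially the same route as the paper's: both reduce, via the commutative diagram for the pair $(\mathbb P^3, X)$ and the vanishing of the relevant cohomology of line bundles on $\mathbb P^3$, to showing that the trace map $\Tr^e_{\mathbb P^3,X}(H)$ vanishes, and both detect this by the local formula of Remark~\ref{trace-local} applied to $f^{p^e-1}$ times a polynomial, where $f$ is the cubic defining $X$. The only real difference is presentational: the paper first reduces to $e=1$ via Lemma~\ref{e-vs-e+1} and checks the four explicit basis forms $\eta_1,\eta_X,\eta_Y,\eta_Z$, whereas you treat all $e$ uniformly by identifying $\Tr^e_{\mathbb P^3}(4H)$ with extraction of the coefficient of $(xyzw)^{p^e-1}$ and using the binary expansion of $2^e-1$ to see that every monomial of $f^{2^e-1}$ has an exponent exceeding $2^e-1$.
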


\begin{proof}
By Lemma~\ref{e-vs-e+1}, we may assume $e=1$. 
By Lemma~\ref{trace-diagram}, we obtain the following commutative diagram 
$$\begin{CD}
0@>>> F_*\omega_{\mathbb P^3}@>>> F_*(\omega_{\mathbb P^3}(X))@>>> F_*\omega_X@>>> 0\\
@. @VVV @VVV @VVV\\
0@>>> \omega_{\mathbb P^3}@>>> \omega_{\mathbb P^3}(X)@>>> \omega_X@>>> 0.
\end{CD}$$
Tensoring $\MO_{\mathbb P^3}(-K_{\mathbb P^3}-X)$ and taking $H^0$, we obtain  
$$\begin{CD}
H^0(\mathbb P^3, \omega_{\mathbb P^3}(X-2K_{\mathbb P^3}-2X))@>\beta>> H^0(X, \omega_{X}(-2K_{X}))\\
@VV\Tr_{\mathbb P^3, X}(-K_{\mathbb P^3}-X)V @VV\Tr_X(-K_X)V\\
H^0(\mathbb P^3, \omega_{\mathbb P^3}(X-K_{\mathbb P^3}-X))@>\alpha>> H^0(X, \omega_{X}(-K_{X})).
\end{CD}$$
Since $H^1(\mathbb P^3, L)=0$ for an arbitrary invertible sheaf $L$, $\beta$ is surjective. 
Therefore, it is sufficient to prove that the trace map 
$\Tr_{\mathbb P^3, X}(-K_{\mathbb P^3}-X)$
is the zero map. 
By Lemma~\ref{D-linearequ}, we obtain 
$$\Tr_{\mathbb P^3, X}(-K_{\mathbb P^3}-X)=\Tr_{\mathbb P^3, X}(H)$$
where $H$ is defined by 
$$H:=\{[x:y:z:w]\in \mathbb P^3\,|\,w=0\}.$$
Thus, we show that 
$$\Tr:=\Tr_{\mathbb P^3, X}(H):H^0(\mathbb P^3, \omega_{\mathbb P^3}(X+2H))\to 
H^0(\mathbb P^3, \omega_{\mathbb P^3}(X+H))$$
is the zero map. 
Let us take a $k$-linear basis of $H^0(\mathbb P^3, \omega_{\mathbb P^3}(X+2H))$. 
Note that 
$$h^0(\mathbb P^3, \omega_{\mathbb P^3}(X+2H))=4.$$ 
Let $\Spec\,k[X, Y, Z]\subset \mathbb P^3$ be the affine open subset defined by $w\neq 0$. 
Consider the following four $3$-forms 
\begin{eqnarray*}
\eta_1&:=&\frac{1}{X^3+Y^3+Z^3+1}dX\wedge dY\wedge dZ\\
\eta_X&:=&\frac{X}{X^3+Y^3+Z^3+1}dX\wedge dY\wedge dZ\\
\eta_Y&:=&\frac{Y}{X^3+Y^3+Z^3+1}dX\wedge dY\wedge dZ\\
\eta_Z&:=&\frac{Z}{X^3+Y^3+Z^3+1}dX\wedge dY\wedge dZ.\\
\end{eqnarray*}
These are elements of $\omega_{k(X, Y, Z)}=(\omega_{\mathbb P^3})_{\xi}$ 
where $\xi$ is the generic point of $\mathbb P^3$. 
By a direct calculation, these four elements are linearly independent and 
$\eta_1, \eta_X,\eta_Y, \eta_Z\in H^0(\mathbb P^3, \omega_{\mathbb P^3}(X+2H)).$ 
In particular, these four elements form a $k$-linear basis of $H^0(\mathbb P^3, \omega_{\mathbb P^3}(X+2H))$. 
The trace map is a $p^{-1}$-linear map, that is, 
for $a, b, c, d\in k$,
\begin{eqnarray*}
&&\Tr(a\eta_1+b\eta_X+c\eta_Y+d\eta_Z)\\
&=&a^{\frac{1}{p}}\Tr(\eta_1)+b^{\frac{1}{p}}\Tr(\eta_X)+c^{\frac{1}{p}}\Tr(\eta_Y)+d^{\frac{1}{p}}\Tr(\eta_Z).
\end{eqnarray*}
Thus, it is sufficient to show 
$$\Tr(\eta_1)=\Tr(\eta_X)=\Tr(\eta_Y)=\Tr(\eta_Z)=0.$$
Let us only prove $\Tr(\eta_X)=0$. 
This follows from 
\begin{eqnarray*}
&&(\Tr(\eta_X))|_{\Spec\,k[X, Y, Z]}\\
&=&\Tr\left(\frac{X}{X^3+Y^3+Z^3+1}dX\wedge dY\wedge dZ\right)\\
&=&\Tr\left(\frac{X(X^3+Y^3+Z^3+1)}{(X^3+Y^3+Z^3+1)^2}dX\wedge dY\wedge dZ\right)\\
&=&\frac{1}{X^3+Y^3+Z^3+1}\Tr\left((X^4+XY^3+XZ^3+X)dX\wedge dY\wedge dZ\right)\\
&=&0.
\end{eqnarray*}
The last equality follows from Remark~\ref{trace-local}. 
\end{proof}

We do not know whether the conclusion of Theorem~\ref{threefold-extension} 
holds when $\kappa(S, K_S+A|_S)=0$. 
However, the following example shows that when this is the case, 
we can not argue as in the proof of the theorem. 

\begin{ex}
Let ${\rm char}\,k=p=2$. 
Then, there exist smooth projective threefold $X$ over $k$, 
a smooth prime divisor $S_0$ on $X$ and 
an ample \Z-divisor $A$ on $X$ which satisfy the following properties. 
\begin{enumerate}
\item{$|K_X+S_0+A|$ is basepoint free. }
\item{The natural restriction map 
$$H^0(X, m(K_X+S_0+A))\to H^0(S_0, m(K_{S_0}+A|_{S_0}))$$
is surjective for every $m\in\mathbb Z_{>0}$.}
\item{
The trace map $\Tr_{S_0}^e(A|_{S_0}+m(K_{S_0}+A|_{S_0}))$
{\Small $$H^0(S_0, \omega_{S_0}(2^e(A|_{S_0}+m(K_{S_0}+A|_{S_0}))))\to 
H^0(S_0, \omega_{S_0}(A|_{S_0}+m(K_{S_0}+A|_{S_0})))$$}
is the zero map for every $m\in \mathbb Z_{>0}$ and for every $e\in\mathbb Z_{>0}.$}
\end{enumerate}
\end{ex}

\begin{proof}
Let $S$ be the surface in Theorem~\ref{trace-cex} and 
let $A_S:=-K_S$. 
Take an arbitrary smooth projective curve $C$ and 
fix an arbitrary ample \Z-divisor $A_C$ on $C$. 
Let $X:=S\times C$ and let $\pi_S$ and $\pi_C$ be 
the projections onto the first and second component, respectively. 
Fix a point $c_0\in C$ and let $S_0:=S\times \{c_0\}$. 
Let 
$$A:=\pi_S^*A_S+\pi_C^*A_C.$$
Note that $A|_{S_0}=-K_{S_0}$. 
Thus, (3) follows from Theorem~\ref{trace-cex}. 
The assertion in (1) follows from 
\begin{eqnarray*}
K_X+S_0+A&=&\pi_S^*(K_S+A_S)+\pi_C^*(K_C+c_0+A_C)\\
&=&\pi_C^*(K_C+c_0+A_C).
\end{eqnarray*}
Therefore, in order to conclude the proof, 
it is sufficient to show (2). 
This follows from 
{\small 
\begin{eqnarray*}
&&H^1(X, K_X+A+(m-1)(K_X+S_0+A))\\
&=&H^1(X, \pi_C^*(K_C+A_C+(m-1)(K_C+c_0+A_C)))\\
&\simeq&H^1(S, \mathcal O_S)\otimes_k H^0(C, K_C+A_C+(m-1)(K_C+c_0+A_C))\\
&\oplus &H^0(S, \mathcal O_S)\otimes_k H^1(C, K_C+A_C+(m-1)(K_C+c_0+A_C))\\
&=&0.
\end{eqnarray*}}
The last equality holds since $H^1(S, \mathcal O_S)=0$ 
and 
$$\deg_C(A_C+(m-1)(K_C+c_0+A_C))>0.$$
\end{proof}


\section{Appendix: Extension theorem for surfaces}

For the surface case, we can freely use the minimal model theory 
(cf. \cite{Fujita3}, \cite{KK}, \cite{T2}). 
By using results obtained in \cite{Fujita3}, \cite{T2} and \cite{T3}, 
we can establish an analogue of \cite[Theorem~5.4.21]{HM} as follows.

\begin{thm}[Extension theorem]\label{surface-ext}
Let $X$ be a smooth projective surface and 
let $C$ be a smooth prime divisor on $X$. 
Let $\Delta:=C+B$, where 
$B$ is an effective \Q-divisor which satisfies the following properties: 
\begin{enumerate}
\item{$C\not\subset\Supp B$, $\llcorner B\lrcorner=0$, and 
$(X, \Delta)$ is plt, }
\item{$B$ is a big \Q-divisor, and}
\item{No prime component of $\Delta$ is contained in the stable base locus of $K_X+\Delta.$}
\end{enumerate}
Then, there exists an integer $m_0>0$ such that, for every integer $m>0$, 
the restriction map 
$$H^0(X, mm_0(K_X+\Delta))\to H^0(C, mm_0(K_X+\Delta)|_C)$$
is surjective.
\end{thm}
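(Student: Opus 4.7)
The plan is to reduce to a nef situation via the minimal model program for surfaces, apply Fujita's abundance theorem, and conclude via a Kawamata--Viehweg-type vanishing that exploits the ample $\mathbb{Q}$-summand of $B$ furnished by condition (2).

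First, I would run a $(K_X+\Delta)$-MMP using the minimal model theory for log canonical surface pairs in positive characteristic developed in \cite{T2} and \cite{T3}. The key point is that by condition (3), no prime component of $\Delta$ can be contracted in any step: if a $(K_X+\Delta)$-negative extremal ray were represented by (or contained) a component $E$ of $\Delta$, then after a small perturbation using the ample summand $A$ from (2), one checks that $E$ would necessarily sit in the stable base locus of $K_X+\Delta$, contradicting (3). Therefore the MMP terminates with a birational contraction $\phi\colon X\to X'$ onto a projective surface $X'$ such that $(X',\Delta'):=(X',\phi_*\Delta)$ is plt, $K_{X'}+\Delta'$ is nef, and $\phi$ is an isomorphism at the generic point of every component of $\Delta$; in particular $C':=\phi_*C\simeq C$, and conditions (1)--(3) survive on $X'$ with the pushed-forward decomposition $B'\sim_{\mathbb{Q}}A'+F'$.

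Next, I would invoke the abundance theorem for log canonical surfaces proved in \cite{Fujita3}: the nef $\mathbb{Q}$-divisor $K_{X'}+\Delta'$ is semi-ample, so there exists $m_0\in\mathbb{Z}_{>0}$ with $m_0(K_{X'}+\Delta')$ Cartier and $|m_0(K_{X'}+\Delta')|$ basepoint free. Since $\phi$ is an isomorphism in a neighbourhood of $C$, it suffices to establish the surjection on $X'$, for which it is enough to prove
$$H^1\bigl(X',\,mm_0(K_{X'}+\Delta')-C'\bigr)=0.$$
Rewriting the argument as $K_{X'}+B'+(mm_0-1)(K_{X'}+\Delta')$ and using (2) to replace $B'$ by $(1-\epsilon)B'+\epsilon(A'+F')$ for small rational $\epsilon>0$, this expression has the form $K_{X'}+N$ with $N$ nef and big, admitting a klt pair structure and an explicit ample component. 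Kawamata--Viehweg-type vanishing for klt log surfaces in positive characteristic, in the form of \cite[Theorem~2.6]{T1}, then produces the vanishing, possibly after enlarging $m_0$ by a universal factor.

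The hardest part will be making Step~1 truly rigorous: one must verify that condition (3) together with the ample summand in (2) prevents any component of $\Delta$ from being contracted at every step of the MMP, and that after repeated contractions $\phi$ remains an isomorphism at the generic point of $C$ so that the identification between sections on $X$ and on $X'$ is valid. A secondary delicate point is confirming that the precise vanishing statement of \cite[Theorem~2.6]{T1} applies on the possibly singular (but klt) minimal model $X'$ with the perturbed boundary; in characteristic zero this is standard Kawamata--Viehweg, but in positive characteristic one must lean on the ample component $\epsilon A'$ to get genuine vanishing rather than mere nefness and bigness.
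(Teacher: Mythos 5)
There is a genuine gap at the final step. You propose to conclude by proving $H^1\bigl(X', mm_0(K_{X'}+\Delta')-C'\bigr)=0$, writing the divisor as $K_{X'}+N$ with $N=B'+(mm_0-1)(K_{X'}+\Delta')$ ``nef and big.'' But $N$ need not be nef for any $m$: on a curve $E$ with $(K_{X'}+\Delta')\cdot E=0$ and $B'\cdot E<0$ one has $N\cdot E<0$ no matter how large $m$ is, and such curves are exactly the ones contracted by the semi-ample map of $K_{X'}+\Delta'$. Moreover, even where nefness and bigness hold, the vanishing of \cite[Theorem~2.6]{T1} (and every Fujita-type statement available in positive characteristic) is asymptotic in the multiple of the \emph{ample or nef-and-big} part; here the ample summand $\epsilon A'$ is fixed and only the nef direction $(K_{X'}+\Delta')$ grows, so no such theorem applies, and Kawamata--Viehweg itself fails in characteristic $p$ (Raynaud). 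Worst of all, the vanishing you want is simply false in general: if the semi-ample fibration $f\colon X'\to R$ of $K_{X'}+\Delta'$ has $\dim R\leq 1$, then by Leray $H^1(X', mm_0 f^*H - C')$ receives $H^0(R, R^1f_*\mathcal O_{X'}(-C')\otimes H^{\otimes mm_0})$, which can be nonzero (and growing) for all $m$. The surjectivity is still true, but not because that $H^1$ vanishes.

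The paper's route around this is to descend to $R$ rather than to kill an $H^1$ on $X$: after making $K_X+\Delta$ nef (contracting $(-1)$-curves, which by assumption (3) are not components of $\Delta$ and, crucially, satisfy $E\cdot C=0$, so the model stays smooth and the restricted linear systems on $C$ are genuinely unchanged --- note that ``$\phi$ is an isomorphism at the generic point of $C$,'' which is all you ask for, is not enough to identify $H^0(C,\cdot)$ with $H^0(C',\cdot)$), one takes the semi-ample fibration $f\colon X\to R$ with $m_2(K_X+\Delta)=f^*H$, proves $f_*\mathcal O_C=\mathcal O_{f(C)}$ (a separate nontrivial step, ruled out by running the $(K_X+\{\Delta\})$-MMP and using bigness of $B$ to exclude $C$ being a $2$-section of a ruling), and then applies plain Serre vanishing on $R$ to the surjection $H^0(R,mm_3H)\to H^0(f(C),mm_3H|_{f(C)})$, which pulls back to the desired statement. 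You should replace your vanishing step by this descent argument.
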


\begin{proof}
\setcounter{step}{0}
\begin{step}
In this step we prove that, if $E$ is a curve in $X$ such that 
$E^2<0$ and $(K_X+\Delta)\cdot E<0$, then 
the following three assertions hold:  
\begin{enumerate}
\item[(a)]{$K_X\cdot E=E^2=-1$,}
\item[(b)]{$E$ is not a prime component of $\Delta$, and}
\item[(c)]{$E\cdot C=0$.} 
\end{enumerate}

Since $E^2<0$, we obtain $(K_X+E)\cdot E\leq (K_X+\Delta)\cdot E<0$. 
Then, there exists 
a birational morphism $f:X\to Y$ to a normal \Q-factorial surface $Y$ such that $\Ex(f)=E$ 
(cf. \cite[Theorem~6.2]{T2}). 
Let $\Delta_Y:=f_*\Delta$ and we define $d\in\mathbb Q$ by 
$$K_X+\Delta=f^*(K_Y+\Delta_Y)+dE.$$
The inequalities $(K_X+\Delta)\cdot E<0$ and $E^2<0$ imply $d>0.$ 
We can find an integer $l>0$ such that $l(K_Y+\Delta_Y)$ is Cartier. 
Then, $E$ is a fixed component of 
$$l(K_X+\Delta)=f^*(l(K_Y+\Delta_Y))+dlE.$$
We deduce that the assumption (3) implies (b). 
This gives $E\cdot \Delta\geq 0.$ 
Thus, the assertion (a) follows from 
$$K_X\cdot E\leq (K_X+\Delta)\cdot E<0.$$
Let us show (c). 
If $E\cdot C>0$, then $E\cdot C\geq 1.$ 
This implies the following contradiction 
$$0>(K_X+\Delta)\cdot E=K_X\cdot E+C\cdot E+B\cdot E\geq -1+1+0=0.$$
\end{step}

\begin{step}
In this step we prove that 
we may assume that $K_X+\Delta$ is nef. 

Assume that $K_X+\Delta$ is not nef. 
Then, there exists a curve $E$ such that $(K_X+\Delta)\cdot E<0$. 
By (3), there exists an integer $l>0$ such that $|l(K_X+\Delta)|\neq\emptyset.$ 
This implies $E^2<0$. 
We see that $E$ is a $(-1)$-curve by Step~1. 
Let $f:X\to Y$ be the contraction of $E$. 
Let 
$$\Delta_Y:=f_*\Delta\,,\,\,C_Y:=f_*C\,,\,\,B_Y:=f_*B.$$
We can check that $Y$ and these divisors also satisfy the conditions (1), (2) and (3). 
Let $m_1>0$ be an integer such that $m_1\Delta$ is a \Z-divisor. 
Then we have 
$$m_1(K_X+\Delta)=f^*(m_1(K_Y+\Delta_Y))+eE$$
for some $e\in\mathbb Z_{>0}.$ 
Let $n$ be an arbitrary positive integer. 
Since $f_*\mathcal O_X=\mathcal O_Y$, we have 
{\Small $$f_*(\mathcal O_X(nm_1(K_X+\Delta)))\simeq f_*(\mathcal O_X(nm_1f^*(K_Y+\Delta_Y)))\simeq 
\mathcal O_Y(nm_1(K_Y+\Delta_Y)).$$}
Since $C\cap E=\emptyset$ by Step~1, we have 
{\Small $$f_*(\mathcal O_C(nm_1(K_X+\Delta)))\simeq f_*(\mathcal O_C(nm_1f^*(K_Y+\Delta_Y)))\simeq 
\mathcal O_{C_Y}(nm_1(K_Y+\Delta_Y)).$$}
We conclude that we have the following commutative diagram 
$$\begin{CD}
H^0(X, nm_1(K_X+\Delta))@>>> H^0(C, nm_1(K_X+\Delta)|_C)\\
@| @|\\
H^0(Y, nm_1(K_Y+\Delta_Y))@>>> H^0(C_Y, nm_1(K_Y+\Delta_Y)|_{C_Y})
\end{CD}$$
where the horizontal arrows are the natural restriction maps. 
Thus, we can reduce the problem on $X$ to the problem on $Y$. 
After repeating this argument finitely many times, we reduce to the
case when $K_X+\Delta$ is nef. 
\end{step}

\begin{step}
By the abundance theorem (\cite[the second theorem in Introduction]{Fujita3}), 
$K_X+\Delta$ is semi-ample. 
Let 
$$f:=\varphi_{|m_2(K_X+\Delta)|}:X\to R$$
for some $m_2\in\mathbb Z_{>0}$ 
such that $f_*\mathcal O_X=\mathcal O_R.$ 
In this step, 
we prove $f_*\mathcal O_C=\mathcal O_{f(C)}$. 

Assume $f_*\mathcal O_C\neq \mathcal O_{f(C)}$. 
We run the $(K_X+\{\Delta\})$-MMP, where 
$\{\Delta\}$ denotes the fractional part of $\Delta$. 
By \cite[Proposition~2.8]{T3}, 
there exist morphisms 
$$X\overset{g}\to V\to R$$
where $V$ is a smooth projective curve such that 
a general fiber $G$ of $g$ satisfies 
$G\simeq\mathbb P^1$ and $\llcorner\Delta\lrcorner\cdot G=2.$ 
Note that $B=\{\Delta\}$ and $C=\llcorner\Delta\lrcorner$. 
Since $G$ is a fiber and $B$ is big, we deduce $G\cdot B>0$. 
Thus we obtain the following contradiction:  
$$0=(K_X+\Delta)\cdot G=(K_X+B)\cdot G+2>(K_X+G)\cdot G+2=0.$$
\end{step}

\begin{step}
In this step we prove the assertion in the theorem. 
Let 
$$f:=\varphi_{|m_2(K_X+\Delta)|}:X\to R$$
such that $f_*\mathcal O_X=\mathcal O_R$ and 
let $f(C)=:D.$ 
By Step~3, we have $f_*\mathcal O_C=\mathcal O_D.$ 
Let $H$ be an ample Cartier divisor on $R$ such that 
$m_2(K_X+\Delta)=f^*H.$ 
By the Serre vanishing theorem, 
we can find $m_3\in\mathbb Z_{>0}$ such that 
$$H^0(R, mm_3H)\to H^0(D, mm_3 H)$$
is surjective for every $m\in \mathbb Z_{>0}.$
Since $f_*\mathcal O_X=\mathcal O_R$ and 
$f_*\mathcal O_C=\mathcal O_D$, 
we have the following commutative diagram 
$$\begin{CD}
H^0(X, mm_2m_3(K_X+\Delta))@>>> H^0(C, mm_2m_3(K_X+\Delta)|_C)\\
@| @|\\
H^0(R, mm_3H)@>>> H^0(D, mm_3H|_{D}).
\end{CD}$$
This implies the assertion in the theorem. 
\end{step}
\end{proof}

\end{document}